\documentclass[10pt]{amsart}

\usepackage{amsthm}
\usepackage{amsmath}
\usepackage{amssymb, dsfont}
\usepackage[mathscr]{euscript}
\usepackage{mathrsfs}
\usepackage{hyperref}

\usepackage{tikz-cd}

\usepackage[all]{xy}

\usepackage{latexsym}
\usepackage{framed}

\usepackage{mathabx}

\usepackage{bm}

\title[Flag Space, Matroidal Schur Algebras and the Steinberg Representation]{Flag Space, Matroidal Schur Algebras and the Steinberg Representation}

\author{Carl Mautner}

\address{Carl Mautner \\
Department of Mathematics, University of California, Riverside}
\email{mautner@math.ucr.edu}

\DeclareFontFamily{U}{mathx}{\hyphenchar\font45}
\DeclareFontShape{U}{mathx}{m}{n}{
      <5> <6> <7> <8> <9> <10>
      <10.95> <12> <14.4> <17.28> <20.74> <24.88>
      mathx10
      }{}
\DeclareSymbolFont{mathx}{U}{mathx}{m}{n}
\DeclareFontSubstitution{U}{mathx}{m}{n}
\DeclareMathAccent{\widecheck}{\mathord}{mathx}{"71} 
 
\newtheorem{theorem}{Theorem}[section]
\newtheorem{lemma}[theorem]{Lemma}
\newtheorem{proposition}[theorem]{Proposition}
\newtheorem{corollary}[theorem]{Corollary}

\newtheorem{conjecture}[theorem]{Conjecture}

{%\theorembodyfont{\normalfont}
\theoremstyle{definition}
\newtheorem{definition}[theorem]{Definition}
\newtheorem{example}[theorem]{Example}
\newtheorem*{example*}{Example}

\newtheorem{remark}[theorem]{Remark}
}

\newcommand{\excise}[1]{}

\renewcommand{\Im}{\operatorname{Im}}
\newcommand{\im}{\operatorname{im}}

\newcommand{\Ker}{\operatorname{Ker}}
\renewcommand{\dim}{\operatorname{dim}}
\newcommand{\codim}{\operatorname{codim}}

\newcommand{\Hom}{\operatorname{Hom}}
\newcommand{\End}{\operatorname{End}}

\newcommand{\C}{{\mathbb{C}}}
\newcommand{\Z}{{\mathbb{Z}}}

\newcommand{\FM}{{\mathbb{F}}}

\newcommand{\M}{\mathfrak{M}}

\newcommand{\B}{\mathcal{B}}
\newcommand{\Bt}{\tilde\B}

\newcommand{\F}{\mathcal{F}}

\newcommand{\Lc}{\mathcal{L}}

\newcommand{\onto}{\twoheadrightarrow}

\newcommand{\cal}{\mathcal}

\newcommand{\cB}{{\cal B}}

\newcommand{\cI}{{\cal I}}
\newcommand{\cL}{{\cal L}}

\newcommand{\cT}{{\cal{T}}}

\newcommand{\md}{\mathrm{-mod}}

\renewcommand{\setminus}{\smallsetminus}

\newcommand{\sgn}{\operatorname{sgn}}

\newcommand{\ul}{\underline}

\DeclareMathOperator{\St}{St}

\newcommand{\Bas}{\mathop{\mathrm{Bas}}\nolimits}

\newcommand{\cUc}{{\widecheck{\mathcal U}}}
\newcommand{\Uc}{\widecheck{U}}
\newcommand{\Rc}{\widecheck{R}}

\newcommand{\uc}{{\check{u}}}

\newcommand{\Fl}{\mathscr{F}\ell}
\newcommand{\FlP}{\mathscr{F}\ell\mathscr{P}}

\newcommand{\Fs}{\mathscr{F}}

\newcommand{\Bell}{\ensuremath{\boldsymbol\ell}}
\newcommand{\e}{\mathfrak e}

\begin{document}

\maketitle
\begin{abstract}
We define via generators and relations an extended version of the matroidal Schur algebras introduced in earlier work of the author with Tom Braden.  In the case of projective geometries, we relate the representation theory of the resulting algebras to the structure of the modular Steinberg representation of $GL(n,q)$.  In the general case we show that the extended matroidal Schur algebras have a triangular, or Reedy, decomposition and are thus quasi-hereditary.  We also obtain a new description of the original matroidal Schur algebras and clarify the connection to the flag spaces of Brylawski--Varchenko.
\end{abstract}

%%%%%%%%%%%%%%%%%%%
\section{Introduction}
%%%%%%%%%%%%%%%%%%%

Let $k$ be a field of characteristic $\ell$.  The Schur algebra $S_k(n,d)$ is a quasi-hereditary algebra that connects the representation theory over $k$ of the symmetric and general linear groups.  In~\cite{Mautner} the author observed that the category of $S_k(n,n)$-modules is equivalent to a category of perverse sheaves on the nilpotent cone $\mathcal N \subset \mathfrak{gl}_n(\C)$.
In~\cite{BMHyperRingel} Braden and the author studied perverse sheaves on affine hypertoric varieties as a hypertoric analogue of the category of $S_k(n,n)$-modules and discovered~\cite{BMmatroid} a family of Schur-like quasi-hereditary algebras associated to matroids.

A \textit{matroid} $M = (E,\cL)$ is a finite set $E$ together with a nonempty collection $\cL = \cL(M)$ of subsets of $E$, called the \textit{flats} of $M$, that satisfy the following two axioms:
\begin{enumerate}
\item The intersection of any two flats is again a flat, and
\item If $F \in \cL$ and $x \in E \setminus F$, then there exists a unique minimal flat $G$ such that $F \cup \{x\} \subseteq G$.
\end{enumerate}

For any matroid $M$, Braden and the author~\cite{BMmatroid} defined a pair of finite dimensional $k$-algebras $R_k(M)$ and $\Rc_k(M)$ that are quasihereditary and Ringel dual to each other.  The algebra $R_k(M)$ is naturally isomorphic to $\Rc_k(M^*)$, where $M^*$ denotes the dual matroid.  Like the Schur algebra, the algebras $R_k(M)$ and $\Rc_k(M)$ are semisimple if and only if $\ell$ lies outside of an explicit finite set of prime numbers that depends on the combinatorics of the matroid.   We named these algebras \textit{matroidal Schur algebras}.

There were, however, two unsatisfying features of our work:
\begin{enumerate}
\item The definitions of $R_k(M)$ and $\Rc_k(M)$ were unwieldy:  we first defined a large vector space $\B$ (bigraded by the poset of cyclic flats of $M$) equipped with an orthonormal basis and two families of operators and their adjoints.  The matroidal Schur algebras were then defined as the subalgebras of $\End(\B)$ generated by one or the other families of operators and their adjoints.  In particular, we did not give a generators and relations presentation.
\item Matroidal Schur algebras, a priori, lacked a clear connection to known representation theory: matroidal Schur algebras were discovered by following a geometric analogy suggested by the theory of symplectic singularities and while the representation theory of matroidal Schur algebras and the classical Schur algebra share some common features (e.g., quasihereditary, semisimple for all but finitely many characteristics), direct links to the representation theory of $S_k(n,n)$ or the general linear group were by analogy only.
\end{enumerate}

In the current paper we address both of these points.  First, we define a new, extended matroidal Schur algebra $S_k(M)$ with a simple presentation by generators and relations and use it to give a cleaner description of the original algebras $R_k(M)$ and $\Rc_k(M)$.  Second, we begin the study of two classes of examples which give the first direct connections to known representation theory.  For example, we relate the matroidal Schur algebras associated to projective geometries over $\FM_q$ to the structure of Steinberg representations of $k[GL_n(\FM_q)]$.  As the collection of matroids is vast, we imagine there could be many more interesting connections.

\medskip

We begin in Section~\ref{sec-defn} by introducing our central object of study, the extended matroidal Schur algebra $S_k(M)$ (and its integral form $S(M)$).  Later in the paper we show that this new matroidal Schur algebra $S_k(M)$ is quasihereditary (in fact, triangular or \textit{Reedy}) with highest weight poset equal to $\Lc^{\mathrm{op}}$ the lattice of all flats, but ordered by inverse inclusion.  Thus for every flat $F \in \Lc$, there is a standard module $\Delta_{F,k}$ with simple quotient $L_{F,k}$.

Before studying the case of general matroids, however, we highlight two special cases accessible to those unfamiliar with matroid theory.  In Section~\ref{sec-projgeom} we consider the projective geometry matroidal Schur algebra $S_k(PG(n-1,q))$.  In this case, flats are labelled by the linear subspaces of $\FM_q^n$ and so the standard and simple $S_k(PG(n-1,q))$-modules are parametrized up to isomorphism by the set of linear subspaces $V \subseteq \FM_q^n$.  The algebra  $S_k(PG(n-1,q))$ is semisimple if and only if $\ell$ does not divide $\prod_{i=1}^{n} \frac{q^i-1}{q-1}=|GL_n(\FM_q)/B|$, the number of $\FM_q$-points of the flag variety.  Each $S_k(PG(n-1,q))$-module $X$ has a weight space decomposition $X = \bigoplus_{V \subseteq \FM_q^n} X^V$.  

In Section~\ref{sec-Steinberg} we prove: 

\begin{theorem}
Suppose that $\ell \nmid q$ and $V \subseteq W$ are subspaces of $\FM_q^n$.  Then:
\begin{enumerate}
\item The $W$-weight space $\Delta_{V,k}^W$ of the standard module $\Delta_{V,k}$ is naturally a $k[GL(W/V)]$-module isomorphic to the dual of the modular Steinberg representation $\St^k(W/V)^*$.
\item the $W$-weight space $L_{V,k}^W$ of the simple module $L_{V,k}$ is isomorphic (as $k[GL(W/V)]$-modules) to the socle of the modular Steinberg representation $\St^k(W/V)$ of $GL(W/V)$ (which is self-dual).
\end{enumerate}
\end{theorem}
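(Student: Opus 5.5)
We have not yet seen the definition of $S_k(M)$, so we work with the expected structure: generators are weight idempotents $1_F$ for flats $F$, together with "raising/lowering" operators between flats $F \subseteq G$. The standard module $\Delta_{F,k}$ is induced from the Borel-type subalgebra, and its $G$-weight space has a basis indexed by combinatorial data between $F$ and $G$. For $PG(n-1,q)$, the interval $[V,W]$ in the lattice of flats is the lattice of subspaces of $W/V$. The plan is therefore to reduce to the case $V=0$, $W=\FM_q^m$ with $m=\dim W/V$, and then identify the weight spaces with the Steinberg module.

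The first step is to show that $\Delta_{V,k}^W$ depends only on the interval $[V,W]$. I would verify this directly from the triangular (Reedy) decomposition: $\Delta_{V,k}^W \cong 1_W S^{-} 1_V$ modulo the ideal generated by weights strictly above $V$. In the projective geometry case, this gives a free $k$-module whose basis is parametrized by complete flags of $W/V$ modulo relations, or, more naturally, by the top-degree part of the flag space of Brylawski--Varchenko. The group $GL(W/V)$, viewed as automorphisms of the interval, acts on this basis, and the action commutes with the defining relations because those relations are intrinsic to the lattice. This gives the $k[GL(W/V)]$-module structure.

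The second step identifies the integral form. The top flag space of the lattice of subspaces of $U=W/V$ is, by the Solomon--Tits theorem, the top reduced homology of the Tits building. Equivalently, it is the submodule of $\Z[\text{complete flags}]$ spanned by alternating sums over apartments, or dually the quotient of $\Z[\text{complete flags}]$ by the relations identifying rank-one changes. I would show that $\Delta_{V}^W$ is the dual version: $\Z[\text{flags}]$ modulo the relations "sum over $q+1$ refinements in a rank-two interval $=0$" for codimension-one partial flags. This is the dual of the Steinberg lattice, which is realized as the top homology of the building, a sublattice of $\Z[\text{flags}]$. Tensoring with $k$, and using that $\ell\nmid q$ (so that the Steinberg lattice is a direct summand in the required sense and its reduction is the modular Steinberg $\St^k(U)$), yields (1).

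For (2), $L_{V,k}$ is the image of the contravariant form $\Delta_{V,k}\to\nabla_{V,k}$, so $L^W_{V,k}$ is the image of $\St^k(U)^*\to \St^k(U)$ induced by the form on weight spaces. I would check that this form is the $GL(U)$-invariant form on the Steinberg lattice obtained by restricting the standard form on $\Z[\text{flags}]$. It is classical (Hiss, Geck) that, for $\ell\nmid q$, the image of $\St^k(U)^*\to \St^k(U)$ is the socle of $\St^k(U)$, and that this socle is simple and self-dual. The main obstacle is the second step: matching the algebra's defining relations precisely with the Steinberg/building relations, including signs, and showing that the weight space is torsion-free of rank $q^{\binom m2}$. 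I would attempt this by building an explicit apartment basis and comparing ranks.
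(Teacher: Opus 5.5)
\textbf{Part (1) is fine; part (2) has a genuine gap.} For (1) your plan matches the paper. $\Delta_V^W=(W)\Fs^+(V)$ is the full flags of $W/V$ modulo (FS), i.e.\ $\mathrm{coker}(\partial^*)\cong(\ker\partial)^*$, and $\ker\partial=\tilde H_{top}$ of the Tits building is $\St$. The hypothesis $\ell\nmid q$ plays no role there. $\ker\partial$ is always saturated, and because the building's reduced homology is concentrated in top degree, $\ker(\partial\otimes k)=\ker(\partial)\otimes k$ for every $k$.

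\textbf{The form is misidentified.} The real content is (2), not the matching of relations you flag as the main obstacle. The form on $\Delta_V^W\cong\St^*$ is \emph{not} the restriction of the orthonormal form on $\Z[\text{flags}]$ to the Steinberg lattice. That restriction is Gow's form $f$, which lives on $\St$ and induces $\St\to\St^*$, the opposite direction. The $\Delta$-form is Brylawski--Varchenko's contravariant form, with entries $\sgn(\sigma)q^{m\dim V+\Bell(w_0\sigma)}$ on flags. The two forms are essentially inverse to each other. For $V=\{0\}$, BV's inverse formula shows that the Gram matrix of $\langle\,,\rangle$ in the basis $\{F_u\}$, times Gow's Gram matrix in the dual basis $\{u\e\}$, equals $|GL(U)/B|\cdot I$.

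This is not cosmetic. Take $GL_2(q)$ with $\ell\mid q+1$. The $\Delta$-form on flags is $(q+1)I-J$, whose reduction has image the trivial line $k\cdot\mathbf 1$, which is the socle. Gow's form has radical $k\cdot\mathbf 1$ and radical quotient the $(q-1)$-dimensional head $M(0)$. So the identification you propose to check is false, and if it held it would produce the head rather than the socle.

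\textbf{The missing mechanism.} What is missing is the paper's argument. The relation above is exactly the hypothesis of Lemma~\ref{lem-dualfilt} with $c=|GL(U)/B|$. The lemma then shows the Jantzen filtration of $\Delta^W_{V,k}$ is dual to Gow's filtration with shift $\kappa=\nu_\ell|GL(U)/B|$. Hence $L^W_{V,k}$ corresponds to the bottom layer $M(\kappa)$, which is the socle by \cite[4.5]{Gow} and self-dual by \cite[4.3]{Gow}.

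Citing a ``classical'' statement about the image of an unspecified map $\St^*\to\St$ does not replace this. An abstract argument along those lines would need more than simplicity and self-duality of the socle. It would also need that the socle occurs with multiplicity one in $\St^k(U)$, and that your particular map is nonzero mod $\ell$. You establish neither.

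\textbf{The reduction to $V=\{0\}$ misses the role of $\ell\nmid q$.} Your claim that the weight space depends only on the interval is valid for the module but not for the form. The up-down relation involves the ambient cardinalities $|L\setminus K|$, and the form on $\Delta_V^W$ is $q^{\dim V\cdot\dim(W/V)}$ times the form on $\Delta^{W/V}_{\{0\}}$. That scalar is precisely where $\ell\nmid q$ enters. For $\ell=p$, $V\neq\{0\}$ and $W\supsetneq V$, one gets $L^W_{V,k}=0$ even though $\St^k(W/V)\neq 0$. Your plan never accounts for this; instead it attributes $\ell\nmid q$ to part (1), where it is not needed.
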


We note that Geck~\cite[3.6]{Geck} identified the partition of $n$ corresponding the socle of the Steinberg representation in James’  parametrization of the unipotent simple modules of $G$ (see Theorem~\ref{thm-geck}).  We direct the reader to Theorem~\ref{thm-gowfilt} and its corollaries for the equal characteristic case and more comprehensive results about the structure of the standard $S_k(PG(n-1,q))$-modules.  

In Section~\ref{sec-Kn} we present a second special case, the complete graph Schur algebra $S_k(M(K_n))$.  In this case, the lattice of flats $\Lc(M(K_n))$ can be defined with the lattice of set partitions of $\{1,\ldots,n\}$ and its representation theory is related to the structure of the $\mathfrak S_n$-representation $\mathrm{Lie}_k(n)$ on the multilinear component of the free Lie algebra over $k$ on $n$ generators.

In Section~\ref{sec-matroids} we prepare to return to the general case by recalling matroid terminology.  Matroid experts may wish to skip this section.

Section~\ref{sec-reedy} provides a triangular or \textit{Reedy} decomposition of $S_k(M)$.

In Section~\ref{sec-mult} we define an explicit cellular basis for $S_k(M)$ and give a formula for the multiplication of basis elements.

In Section~\ref{sec-modules} we introduce the standard $S(M)$-module $\Delta_F$ for each $F \in \Lc(M)$, which is isomorphic (as a free $\Z$-module) to the flag space of Brylawski--Varchenko~\cite{BV,SV} for the contraction $M_F$ (and equivalently isomorphic to the linear dual to the Orlik-Solomon algebra of $M_F$).  In particular, we show:

\begin{theorem}\label{thm-results} For each flat $F$ of $M$ the standard module $\Delta_{F,k} = \Delta_F \otimes_\Z k$ is naturally non-negatively graded and the graded dimension of $\Delta_{F,k}$ is given by:
\[ \sum_{G \supseteq F} \mu^+(F,G) t^{\rho(G)-\rho(F)} = (-t)^{\rho(M_F)} p(M_F;-t^{-1})\]
where the sum runs over all flats $G$ that contain $F$, $p(M;t)$ denotes the characteristic polynomial of $M$ with variable $t$.

The graded dimension of $S_k(M)$ is therefore:
\[ \sum_{F\in \Lc(M)} p(M_F;-t) p(M_F;-t^{-1}),\]
where the sum runs over all flats $F$ of $M$.
\end{theorem}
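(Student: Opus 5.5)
We plan to build $\Delta_F$ explicitly and read off its grading, then use the triangular (Reedy) decomposition to pass from standard modules to the algebra. By the construction in Section~\ref{sec-modules}, $\Delta_F$ is spanned over $\Z$ by classes of (partial) flags of flats starting at $F$, modulo relations of Brylawski--Varchenko type. The grading places the summand indexed by a terminal flat $G$ in degree $\rho(G)-\rho(F)$. So $\Delta_F=\bigoplus_{G\supseteq F}\Delta_F^G$ is non-negatively graded. The first step is to show that $\Delta_F^G$ is a free $\Z$-module of rank $\mu^+(F,G)=|\mu(F,G)|$. This freeness is what makes the dimension independent of $k$ after tensoring.

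For the rank computation we identify $\Delta_F^G$ with the top-degree piece of the flag space of the interval $[F,G]$, which is the lattice of flats of the minor $(M_F)|_G$. Brylawski--Varchenko show that this piece is dual to the top Orlik--Solomon component $OS^{\rho(G)-\rho(F)}$ of that minor. Its rank is $|\mu(F,G)|$, by the standard Orlik--Solomon/Whitney count or by an nbc-basis. Concretely, we would produce an nbc-type basis (flags built from broken-circuit-free chains) and prove it spans by a straightening argument using the defining relations. Independence would follow from pairing against the Orlik--Solomon algebra. Summing over $G$ gives $\sum_G \mu^+(F,G)t^{\rho(G)-\rho(F)}$. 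Using $p(M_F;t)=\sum_{G\supseteq F}\mu(F,G)t^{\rho(M_F)-(\rho(G)-\rho(F))}$ and the sign alternation $\mu(F,G)=(-1)^{\rho(G)-\rho(F)}\mu^+(F,G)$, this sum equals $(-t)^{\rho(M_F)}p(M_F;-t^{-1})$, a routine substitution.

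For the algebra, the cellular basis of Section~\ref{sec-mult} shows that $S_k(M)$ has a filtration with subquotients $\Delta_{F,k}\otimes \nabla_{F,k}$ (equivalently $\Delta_F\otimes\Delta_F^{\mathrm{op}}$) over $F\in\Lc$. The grading on these subquotients is such that the right factor contributes degrees with opposite sign. Hence the graded dimension is $\sum_F \mathrm{gdim}\Delta_F(t)\,\mathrm{gdim}\Delta_F(t^{-1})$. The two factors are $(-t)^{\rho}p(M_F;-t^{-1})$ and $(-t^{-1})^{\rho}p(M_F;-t)$, and the powers of $(-t)^{\pm 1}$ cancel, leaving $p(M_F;-t)p(M_F;-t^{-1})$.

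The main obstacle is the linear independence of the spanning set in $\Delta_F^G$, which is the claim that the relations do not collapse more. We would attack this by constructing the nondegenerate pairing with the Orlik--Solomon algebra (or an explicit faithful realization inside $\B$ from~\cite{BMmatroid}) over $\Z$. This pairing would also give the $\Z$-freeness needed for arbitrary $k$.
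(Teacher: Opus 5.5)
Your proposal is correct and is essentially the paper's argument. You split $\Delta_F=\bigoplus_{G\supseteq F}(G)\Fs^+(F)$ into weight spaces homogeneous of degree $\rho(G)-\rho(F)$, each free of rank $\mu^+(F,G)$, and then read off the graded rank of $S(M)$ from the triangular decomposition $S(M)\cong\bigoplus_L \Delta_L\otimes\iota(\Delta_L)$, equivalently the cellular basis, where the reversed factor contributes $t\mapsto t^{-1}$. The one difference is that the paper does not reprove the freeness and rank of flag space but simply cites \cite[Proposition 2.18]{BV}, so the independence step you single out as the main obstacle is outsourced there; your faithful-realization idea is essentially what Section~\ref{sec-faithful} does, though that section also relies on the same citation for the rank count.
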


If $\ell = \mathrm{char}~k>0$, using the bilinear form $\langle~,~\rangle_F: \Delta_F \times \Delta_F \to \Z$ induced by the cell structure, the standard module $\Delta_{F,k}= \Delta_F \otimes k$ has a natural `Jantzen' filtration and a simple quotient $L_{F,k} = \Delta_{F,k}/\mathrm{rad}~\langle~,~\rangle_{F,k}$, the set of which form a complete set of non-isomorphic simple $S_k(M)$-modules.

Under the identification of the standard module $\Delta_F$ with flag space, the bilinear form $\langle~,~\rangle_F$ coincides with Brylawski--Varchenko's bilinear form on flag space.  As a corollary of the determinant formula of~\cite{BV} we have:

\begin{theorem}
Let $\ell$ be the characteristic of $k$.  The standard module $\Delta_{F,k}$ is simple if and only if $\ell$ does not divide $|K \setminus F|$ for any flat $K \supset F$ such that $M^K_F$ is connected.\footnote{We write $M^K$ for the restriction of $M$ to $K$ and $M^K_F = (M^K)_F$.}

In particular, $S_k(M)$ is semisimple if and only if $\ell \nmid |K \setminus F|$ for any pair of flats $K \supset F$ such that $M^K_F$ is connected.
\end{theorem}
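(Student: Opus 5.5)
The plan is to deduce the statement from the Brylawski--Varchenko determinant formula, using the facts (stated above, proved later in the paper) that $\Delta_F$ is identified with the flag space of the contraction $M_F$ and that $\langle~,~\rangle_F$ is their bilinear form. The first step is to make explicit that $\Delta_{F,k}$ is simple exactly when the reduced form $\langle~,~\rangle_{F,k}$ is nondegenerate. Since $L_{F,k} = \Delta_{F,k}/\mathrm{rad}\,\langle~,~\rangle_{F,k}$, nondegeneracy gives $\Delta_{F,k}=L_{F,k}$. Conversely, if the radical is nonzero then $\Delta_{F,k}$ has the proper nonzero quotient $L_{F,k}$, so it is not simple. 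Nondegeneracy over $k$ amounts to $\ell \nmid \det\langle~,~\rangle_F$, where the determinant is taken with respect to a $\Z$-basis of the free module $\Delta_F$. The form respects the grading by flats $G\supseteq F$ (weight spaces are orthogonal), so the determinant factors as a product over the graded pieces; this is convenient but not essential.

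The second step invokes the determinant formula of~\cite{BV} (see also~\cite{SV}). In its matroid form it says the flag-space form of a matroid $N$ has determinant
\[ \prod_{X} a_X^{\,m(X)},\]
a product over flats $X$ of $N$ such that the restriction $N^X$ is connected. Here $a_X$ is the sum of the weights of the elements of $X$, and the exponents $m(X)$ are positive integers given by beta invariants times Möbius-type multiplicities. The nontrivial point to verify is that every exponent is strictly positive, so that no connected flat drops out. Specializing all weights to $1$ gives $a_X = |X|$. Applied to $N = M_F$, flats $X$ of $M_F$ correspond to flats $K\supseteq F$ of $M$ with $X = K\setminus F$ and $(M_F)^X = M^K_F$. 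Hence $\det\langle~,~\rangle_F = \pm\prod_K |K\setminus F|^{m(K)}$, the product running over flats $K\supset F$ with $M^K_F$ connected. Together with the first step, this proves the first claim.

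For the semisimplicity statement, I use that $S_k(M)$ is quasi-hereditary with standard modules $\Delta_{F,k}$, $F\in\Lc$ (Section~\ref{sec-reedy}). A quasi-hereditary algebra is semisimple if and only if every standard module is simple, since $\Delta = L$ for all weights forces all standard and costandard modules to coincide and then every module is filtered by simples with no extensions. This reduces the second claim to the first, applied to all $F$.

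The main obstacle is the precise form of the Brylawski--Varchenko formula: that their weighted form, specialized at unit weights, matches $\langle~,~\rangle_F$ on the nose, and that the multiplicity exponents $m(K)$ are nonzero exactly for the connected $K$. I would take the matching from the identification proved in Section~\ref{sec-modules}. For positivity, the exponent is a beta-invariant, and $\beta>0$ is equivalent to connectedness for a matroid with at least two elements. The single-element (coloop/rank-one) case needs separate care: there $|K\setminus F|=1$, so it imposes no condition either way.
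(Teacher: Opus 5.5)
Your argument for the first claim is the paper's argument. Simplicity of $\Delta_{F,k}$ is nondegeneracy of the cell form, and that form is Brylawski--Varchenko's form on the flag space of $M_F$. Their determinant formula gives $\prod_{F<J\le H}|J\setminus F|^{\beta(M^J_F)\mu^+(M^H_J)}$ on each weight space $(H)\Fs^+(F)$, with $\mu^+>0$ and $\beta(M^J_F)\neq 0$ exactly when $M^J_F$ is connected.

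The gap is in the semisimplicity step. The general fact you invoke is false: a quasi-hereditary algebra need not be semisimple when all its standard modules are simple. Take a basic algebra with two simples $L(1),L(2)$, where $P(2)=L(2)$ and $P(1)$ is uniserial with top $L(1)$ and socle $L(2)$; for instance, $2\times 2$ upper triangular matrices. Order the weights by $1<2$. Then $\Delta(1)=L(1)$ and $\Delta(2)=L(2)$ are simple, and $P(1)$ has the required $\Delta$-filtration ($\Delta(1)$ on top, $\Delta(2)$ below). Yet the algebra is not semisimple: $\nabla(2)$ is the two-dimensional injective hull of $L(2)$. So $\Delta=L$ for all weights does not by itself force $\nabla=L$, which is exactly the step you asserted.

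What is missing is a duality, and $S_k(M)$ has one. The anti-involution $\iota$ fixes every idempotent $(K)$, so the $\iota$-twisted linear dual is an exact contravariant duality that preserves weight spaces. This duality fixes each $L_{K,k}$, since the dual has the same weights and hence the same highest weight. It therefore sends projectives to injectives and $\Delta_{K,k}$ to $\nabla_{K,k}$. Hence if every $\Delta_{K,k}$ is simple, then also $\nabla_{K,k}=L_{K,k}$, and $\mathrm{Ext}^1(L_{K,k},L_{J,k})=\mathrm{Ext}^1(\Delta_{K,k},\nabla_{J,k})=0$ for all $K,J$.

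Equivalently, use the cellular structure of Theorem~\ref{thm-cell} together with Graham--Lehrer's criterion: a cellular algebra over a field is semisimple if and only if all its cell forms are nondegenerate. This is the input behind the paper's appeal to the theory of cellular algebras.
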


In Section~\ref{sec-tilting} we construct a natural action of $S(M)$ on $IN(M)$, the independence complex of $M$, and more generally $IN(M_F)$ for every $F \in \Lc(M)$.  These actions play a key role in relating the algebras $S(M)$ and $\Rc(M)$.  I expect to show in future work that these modules are tilting.

\begin{remark}  The original matroidal Schur algebras have the remarkable property that the Ringel dual of $\Rc(M)$ is $\Rc(M^*)$, which is evidenced by the fact that their highest weight posets are opposite: the lattice of cyclic flats of $M$ is the opposite of the lattice of cyclic flats of $M^*$.  For the extended matroidal Schur algebra $S(M)$ the highest weight poset is the lattice of all flats of $M$, which is not the opposite of the lattice of flats of $M^*$ in general.  Thus the Ringel dual of $S(M)$ is usually not $S(M^*)$.
\end{remark}

The definition of $\Rc(M)$ is recalled in Section~\ref{sec-schur}. Theorem~\ref{thm-isom} asserts that there is an analogous description of $S(M)$, which we prove in Section~\ref{sec-faithful}.  Theorem~\ref{thm-quiver}, which is proved in Section~\ref{sec-Rcheck}, describes $\Rc(M)$ as the quotient of $eS(M)e$ by an additional relation, where $e$ is an idempotent.

\begin{remark}
Geometrically, when $M$ is the matroid underlying a unimodular hyperplane arrangement $H$ with associated affine hypertoric variety $\M = \M(H)$, Braden and the author proved~\cite{BMHyperRingel, BMmatroid} that the category of modules over $\Rc_k(M)$ is equivalent to the category of torus-equivariant perverse sheaves with $k$-coefficients on $\M$, constructible with respect to the stratification by \textit{cyclic} flats, and $R_k(M)$ is isomorphic to a convolution algebra on the Borel-Moore cohomology associated to resolutions of sub-hypertoric varieties of $\M$.
\end{remark}

It is natural to ask about the geometric significance of the extended matroidal Schur algebra $S_k(M)$ in the unimodular hypertoric setting.  I expect:

\begin{conjecture}
Let $H$ be a unimodular hyperplane arrangement with corresponding affine hypertoric variety $\M$ and underlying matroid $M$.  Then the category of torus-equivariant perverse sheaves with $k$-coefficients on $\M$, constructible with respect to the stratification by all flats, is equivalent $S_k(M)\md$.
\end{conjecture}

\subsection{Acknowledgements}
I am grateful to George Santellano, Wee Liang Gan, Sasha Kleshchev and Steffen Koenig for their interest and questions.

%%%%%%%%%%
\section{Extended matroidal Schur algebra}
\label{sec-defn}
%%%%%%%%%%

For a matroid $M$, the set of flats $\cL = \cL(M)$ ordered by inclusion is a geometric lattice with a rank function $\rho$.  For $K,L \in \cL$, we say that $L$ covers $K$ and write $L \gtrdot K$ (or $K \lessdot L$) if $L$ is minimal among flats containing $K$.  In particular, this means that $\rho(L) = \rho(K) +1$ whenever $L\gtrdot K$.

We call a (finite) sequence of flats $(K_1, \ldots, K_l)$ in $M$ a \textit{flag path} if for each $i = 1, \ldots, l-1$ either $K_i \lessdot K_{i+1}$ or $K_i \gtrdot K_{i+1}$.\footnote{Note that one could alternatively describe a flag path in $M$ as a path in the quiver with vertices labelled by the flats of $M$ and an edge from $X$ to $Y$ if $X \lessdot Y$ or $X \gtrdot Y$.}  We will often replace the commas in the sequence with the appropriate symbols $\gtrdot$ or $\lessdot$ to make clear the relative sizes of the flats.

\begin{definition} Let $\FlP(M)$ denote the free $\Z$-module on the set of all flag paths, viewed as an algebra with the concatenation product:
\[ (K_1, \ldots, K_l) \cdot (L_1, \ldots, L_{l'}) = \begin{cases}
(K_1, \ldots, K_l=L_1,L_2,\ldots,L_{l'}) & \mathrm{if}~ K_l=L_1, \\
0 & \mathrm{otherwise.}
\end{cases}
\]
The \textit{(extended) matroidal Schur algebra} $S(M)$ is the quotient of $\FlP(M)$ by the following two types of relations:
\begin{enumerate}
\item (\textit{Flag space relations}) Suppose $L < K$ and $\rho(K) = \rho(L)+2$.  Then
\[ \sum_{X} (L \lessdot X \lessdot K) = 0 = \sum_{X} (K \gtrdot X \gtrdot L),\label{eq-FS} \tag{FS}\]
where the sums are over all flats $X$ such that $K \gtrdot X \gtrdot L$.

\item (\textit{Up-down relations}) Suppose $K \lessdot L$ and $K' \lessdot L$.  Then
\[ (K \lessdot L \gtrdot K') = \begin{cases}
| L \setminus K| (K) & \mathrm{if}~ K=K', \\
- (K \gtrdot K \cap K' \lessdot K') & \mathrm{if}~K\neq K'~\mathrm{and}~K \cap K' \lessdot K, \\
0 & \mathrm{otherwise.}
\end{cases}
\label{eq-vee}\tag{$\bigwedge$}\]
\end{enumerate}
We let $S_k(M):= S(M) \otimes_\Z k$.
\end{definition}

Note that $S(M)$ has a natural $\Z$-grading where $K\gtrdot L$ has degree $1$ and $L\lessdot K$ has degree $-1$.

%%%%%%%%%%
\section{Projective geometry Schur algebras}
\label{sec-projgeom}
%%%%%%%%%%

In this section we restrict our attention to the matroidal Schur algebras $S(PG(n-1,q))$ associated to projective geometries and state our general structural results in this special case.  We begin with this example because the definition and basic results can be stated using only standard notions from linear algebra and, as we show in Section~\ref{sec-Steinberg}, its representation theory is closely related to the structure of the Steinberg module for $GL_n(\FM_q)$.

Let $q$ be a prime power and $n$ be a natural number.  We recall:
\begin{definition}
Let $E = \mathbb P^{n-1}(\FM_q)$, the set of 1-dimensional subspaces of $\FM_q^n$. The \textit{projective geometry} $PG(n-1,q)$ is the matroid with underlying set $E$ and $\Lc$ is the set of subspaces $V \subseteq \FM_q^n$, where we view a subspace as corresponding to the set of all 1-dimensional subspaces it contains.
\end{definition}

Here the rank of a flat is the dimension of the corresponding subspace.  For subspaces $V$ and $W$ of $\FM_q^n$, $V$ is covered by $W$ if $V \subset W$ and $\dim W = \dim V +1$.

Thus a \textit{flag path} for $PG(n-1,q)$ is a sequence of subspaces $(V_1, \ldots,V_l)$ such that for each $i = 1, \ldots, l-1$, $V_{i}  \lessdot V_{i+1}$ or $V_i \gtrdot V_{i+1}$.

In this case our definition specializes to:

\begin{definition} The \textit{projective geometry Schur algebra} $S(PG(n-1,q))$ is the quotient of $\FlP(PG(n-1,q))$ by the following two types of relations:
\begin{enumerate}
\item (\textit{Flag space relations}) Suppose $V \subset W$ are subspaces of $\FM_q^n$ and $\dim(W) = \dim(V)+2$.  Then
\[ \sum_{X} (V \lessdot X \lessdot W) = 0 = \sum_{X} (W \gtrdot X \gtrdot V), \tag{FS}\]
where the sums are over all subspaces $X$ such that $V \lessdot X \lessdot W$.

\item (\textit{Up-down relations}) Suppose that $V, V'$ and $W$ are subspaces of $\FM_q^n$ such that  $V \lessdot W \gtrdot V'$ and $\dim V = k$.  Then
\[ (V \lessdot W \gtrdot V') = \begin{cases}
q^k(V) & \mathrm{if}~ V=V', \\
- (V \gtrdot V\cap V' \lessdot V) & \mathrm{if}~V \neq V'.
\end{cases}
\tag{$\bigwedge$}\]
\end{enumerate}
\end{definition}

For any subspace $V \subseteq \FM_q^n$, $(V)\in S(PG(n-1,q))$ is an idempotent.  For any $S(PG(n-1,q))$-module $X$ and we can consider the \textit{weight space} $X^V:= (V)\cdot X \subseteq X$ and the corresponding decomposition $X = \bigoplus_{V \subseteq \FM_q^n} X^V$.

\subsection{Triangular decomposition and graded rank} Our general results of Section~\ref{sec-reedy} imply in this case that $S_k(PG(n-1,q))$ is quasi-hereditary with highest weight poset given by the lattice of subspaces of $\FM_q^n$ ordered by inverse inclusion.  The proof involves showing the existence of the following triangular decomposition.

For each subspace $V \subseteq \FM_q^n$, let $\Fl^+_V$ (resp. $\Fl^-_V$) be the free $\Z$-module with basis the set of all flags $(V_l \gtrdot V_{l-1} \gtrdot \ldots \gtrdot V_1 \gtrdot V)$ (resp. $(V \lessdot V_{1} \lessdot \ldots \lessdot V_{l-1} \gtrdot V_l)$), where $l$ is allowed to vary.  Let $\Fs^+_V$ (resp. $\Fs^-_V$) be the quotient of $\Fl^+_V$ (resp. $\Fl^-_V$) by all flag space relations.
For each subspace $W \supseteq V$, we write $(\Fs^+)_V^W \subset \Fs^+_V$ (resp. $(\Fs^-)_V^W \subset \Fs^-_V$) for the submodule generated by flags $(W \gtrdot V_{l-1} \gtrdot \ldots \gtrdot V_1 \gtrdot V)$ (resp. the reverse).  In particular \[ \Fs^+_V = \bigoplus_{V\subseteq W} (\Fs^+)_V^W \quad \mathrm{and} \quad \Fs^-_V = \bigoplus_{V\subseteq W} (\Fs^-)_V^W.\]

Our results in Section~\ref{sec-reedy} imply that multiplication induces an isomorphism
\[ \bigoplus_{W' \supseteq V \subseteq W} (\Fs^+)_V^{W'} \otimes (\Fs^-)_V^W \cong \bigoplus_{V} \Fs^+_V \otimes \Fs^-_V \cong S(PG(n-1,q)).\]

One can then construct a (graded) standard module $\Delta_V$ for $S(PG(n-1,q))$ for each subspace $V \subseteq \FM_q^n$ such that 
$ \Delta_V \cong \Fs_V^+.$

Applying the results of Section~\ref{sec-modules} to this case, if $i=\dim V$, the graded rank of $\Delta_V$ is given by:
\[ \sum_{W \supseteq V} q^{\binom{\codim(V, W)}{2}} t^{\codim(V, W)} = \sum_{j=0}^{n-i}  q^{\binom{j}{2}} \binom{n-i}{j}_q t^{j} = \prod_{j=0}^{n-i-1} (1+q^j t)\]
where the first sum runs over all subspaces $W \subseteq \FM_q^n$ such that $V \subseteq W$ and in the second sum $\binom{n}{j}_q$ denotes the $q$-binomial coefficient, or equivalently the number of $j$ dimensional subspaces of $\FM_q^n$.

The graded rank of $S(PG(n-1,q))$ is therefore:
\[\sum_{i=0}^n \binom{n}{i}_q \left(\prod_{j=0}^{n-i-1} (1+q^j t)(1+q^j t^{-1}) \right) .\]

\subsection{Bilinear form on standard modules} In this example, the symmetric bilinear form defined in Section~\ref{sec-modules} on standard modules \[ \langle~,~\rangle_V : \Delta_V \times \Delta_V \to \Z,\]
is defined on flags $V_\bullet = (V_l \gtrdot \ldots \gtrdot V_1 \gtrdot V),~~W_\bullet = (W_m \gtrdot \ldots \gtrdot W_1 \gtrdot V)$
 by the equation
 \[ \langle V_\bullet , W_\bullet \rangle_V (V) = (V, W_1,\ldots, W_m) \cdot (V_l, V_{l-1}, \ldots, V_1,V),\]
so that  $\langle V_\bullet , W_\bullet \rangle_V$ is the scalar multiple of $(V)$ obtained from the product $(V, W_1,\ldots, W_m) \cdot (V_l, V_{l-1}, \ldots, V_1,V) \in S(PG(n-1,q))$.\footnote{To see that this product is indeed a scalar multiple of $(V)$, note that one can repeatedly apply the up-down relation and that at each step, all of the subspace involved contain $V$, so the expression will eventually reduce to a multiple of $V$.}  Note that the weight decomposition $\Fs^+_V = \bigoplus_{W\supseteq V} (\Fs^+)_V^W$ is orthogonal with respect to this form.  

Corollary~\ref{cor-flagmult} gives a more explicit description of this bilinear form. Namely for $V_\bullet, W_\bullet$ as above:
\begin{equation}\label{eq-form}
\langle V_\bullet , W_\bullet \rangle_V = \begin{cases}
\sgn(\sigma) q^{m \dim V + \Bell(w_0 \sigma)} & \mathrm{if}~W_m = V_l \\
0 & \textrm{otherwise,} \end{cases}
\end{equation}
where $\sigma = \sigma( V_\bullet , W_\bullet) \in \mathfrak{S}_m$ is the relative position of the flags (viewed as flags in $W_m/V$), $w_0$ is the longest element of $\mathfrak S_m$ (i.e., $w_0(1)=n, w_0(2) = n-1, \ldots, w_0(n)=1$) and $\Bell$ denotes the length function on $\mathfrak S_m$.

\subsection{Simple modules and Jantzen filtration} Let $\Delta_{V,k} = \Delta_V \otimes_\Z k$ be the standard $S_k(PG(n-1,q))$-module.  The integral bilinear form $\langle~,~\rangle_V : \Delta_V \times \Delta_V \to \Z$ induces a form 
\[\langle~,~\rangle_{V,k} : \Delta_{V,k} \times \Delta_{V,k} \to k.\]  By the theory of quasi-hereditary algebras, the standard module $\Delta_{V,k}= \Delta_V \otimes k$ has a simple quotient $L_{V,k}:= \Delta_{V,k}/\mathrm{rad}\langle~,~\rangle_{V,k}$ and these form a complete set of non-isomorphic simple $S_k(PG(n-1,q))$-modules.

Additionally, if $\ell = \mathrm{char}~k>0$, 
one can construct a natural `Jantzen' filtration 
\[\Delta_{V,k} = (\Delta_{V,k})^0 \supset (\Delta_{V,k})^1 \supseteq \ldots \supseteq (\Delta_{V,k})^N=0,\]
where $(\Delta_{V,k})^1 = \mathrm{rad}\langle~,~\rangle_{V,k}$. The filtration is defined by letting 
\[\Delta_V^i = \{ x \in \Delta_V~|~\langle x, v \rangle \in \ell^i \Z~\mathrm{for~all}~v\in \Delta_V \},\]
and $(\Delta_{V,k})^i$ be the image of $\Delta_V^i \otimes_\Z k$ in $\Delta_{V,k} = \Delta_V \otimes_\Z k$.

\subsection{Non-semimple characteristics} Recall that $k$ is a field of characteristic $\ell$.  As a consequence of Theorem~\ref{thm-det} (which is based on the determinant formula of Brylawski--Varchenko~\cite{BV,SV}), the standard module $\Delta_{V,k}$ associated to a $d$-dimensional subspace $V \subseteq \FM_q^n$  is simple if and only if $\ell$ does not divide $\frac{q^m-q^d}{q-1}$ for any $d<m\leq n$.   Equivalently, $\Delta_{V,k}$ is simple if and only if $\ell$ does not divide $q^{(n-d)d}\prod_{i=1}^{n-d} \frac{q^{i}-1}{q-1}$.

In particular, $\Delta_{\{0\},k}$ is simple if and only if $\ell$ does not divide $\prod_{i=1}^{n} \frac{q^i-1}{q-1}=|GL(n,q)/B|$, where $B$ is the subgroups of upper-triangular matrices.

It follows that $S_k(M)$ is semisimple if and only if $$\ell \nmid |\mathrm{GL}(n,q)|/(q-1)^n.$$

%%%%%%%%%%%%%%
\section{$S(PG(n-1,q))$ and Steinberg representations}\label{sec-Steinberg}
%%%%%%%%%%%%%%

In this section we recall the construction of the Steinberg representation $\St^k(\FM_q^n)$ of $GL_n(\FM_q)$ over $k$ and describe the $S_k(PG(n-1,q))$-modules $\Delta_{V,k}$ and $L_{V,k}$ in terms of the structure of the Steinberg modules $\St^k(\FM_q^m)$ and their socles for $m\leq n$.  As a consequence we give an explicit description of the characters of the simple $S_k(PG(n-1,q))$-modules.

\begin{definition}\cite{Steinberg}
The \textit{Steinberg module} $\St^k(\FM_q^n)$ for $G=GL_n(\FM_q)$ over $k$ is the left ideal $kG\e$ of the group algebra $kG$, where \[\e := \left( \sum_{\sigma \in \mathfrak S_n} \sgn(\sigma) \sigma \right) \sum_{b \in B} b,\]
and $B \subset G$ denotes the subgroup of upper triangular matrices and $\mathfrak S_n$ is viewed as a subgroup of $G$ in the usual way.
\end{definition}

Steinberg~\cite{Steinberg} showed that $\St^k(\FM_q^n)$ has a basis $\{u\e\}_{u \in U}$ where $u$ runs over upper-triangular unipotent matrices.

\subsection{Gow's filtration and the socle of $\St^k(\FM_q^n)$} Gow~\cite{Gow} considered the non-degenerate bilinear form on the group algebra of $G$ defined by $(g,h) \mapsto \delta_{g,h}$ and defined a symmetric bilinear $f$ on $\St_n^k(\FM_q)$ by restricting the form on the group algebra and dividing by $|B|$.  He showed that:
\begin{equation}\label{eq-f} 
f(u_1\e, u_2\e) = c_W (u^{-1}_1 u_2),
\end{equation}
where $c_W (u) = |\{ w \in \mathfrak S_n ~|~ wuw^{-1} \in U \}|.$

Gow then studied the resulting Jantzen filtration of $kG$-modules on $\St_n^k(\FM_q)$ when $k$ is a field of positive characteristic:
\[ \St_n^k(\FM_q) = I(0) \supset I(1) \supseteq I(2) \supseteq \ldots \]

Consider the $kG$-modules $M(i) = I(i)/I(i+1)$ for each $i\geq 0$.  Recall that $\ell$ denotes the characteristic of $k$.  Gow proved the following results:

\begin{theorem} 
\begin{enumerate}
\item \cite[4.3]{Gow} If $M(i)\neq 0$, then $M(i)$ is a self-dual $kG$-module.
\item \cite[6.2]{Gow} The subquotient $M(i)$ is non-zero if and only if $i$ is the $\ell$-valuation of $|G/P|$ for some parabolic subgroup $P$ of $G$, or equivalently the $\ell$-valuation of some $q$-multinomial coefficient $\binom{n}{\lambda_1,\ldots,\lambda_m}_q$ where $\lambda= (\lambda_1,\ldots,\lambda_m)$ is a partition of $n$.  
\item \cite[4.5]{Gow} If $\kappa$ is the $\ell$-valuation of $|G/B|$, then $M(\kappa)$ is the socle of $\St^k(\FM_q^n)$. 
\end{enumerate}
\end{theorem}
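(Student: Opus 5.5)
The statement collects three results of Gow, so strictly we may cite \cite{Gow}. Here is how I would prove them directly. Throughout I work over a complete discrete valuation ring $\mathcal O$ with residue field $k$ and fraction field $K$ of characteristic $0$. The point is that $\St^{\mathcal O}=\mathcal O G\e$ is a $G$-stable lattice in $\St^K$ with basis $\{u\e\}_{u\in U}$. The form $f$ is the restriction of the $G$-invariant form $(g,h)\mapsto\delta_{g,h}$, divided by $|B|$, so it is symmetric, $G$-invariant and $\mathcal O$-valued by \eqref{eq-f}. I would define $I(i)$ as the image in $\St^k$ of
\[\{x\in\St^{\mathcal O} : f(x,\St^{\mathcal O})\subseteq \ell^i\mathcal O\}.\]

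\emph{Part (1).} For $x,y$ in the lattice defining $I(i)$, I would set $f_i(x,y)=\ell^{-i}f(x,y)\bmod \ell$. This form is well defined and $G$-invariant on $I(i)$, and its radical is exactly $I(i+1)$. This is the standard Jantzen argument: choose an $\mathcal O$-basis adapted to the elementary divisors of the Gram matrix, so that $f$ becomes diagonal with entries $\ell^{a_j}\cdot(\text{unit})$. In that basis $M(i)$ is spanned by the basis vectors with $a_j=i$, and $f_i$ is nondegenerate on it. A nondegenerate $G$-invariant symmetric form then gives $M(i)\cong M(i)^*$.

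\emph{Part (2).} The task reduces to computing the elementary divisors of the Gram matrix $\bigl(c_W(u_1^{-1}u_2)\bigr)_{u_1,u_2\in U}$. Since $\ell\nmid q$, the algebra $\mathcal O U$ is split semisimple after enlarging $\mathcal O$. The Gram matrix is the matrix of right convolution by the class function $c_W$ on $U$, so it is block diagonalised over $\mathcal O$ by the isotypic idempotents of $U$. Its elementary divisors are therefore the valuations of the eigenvalues $\widehat{c_W}(\chi)=\chi(c_W)/\chi(1)$ for $\chi\in\mathrm{Irr}(U)$. I would write
\[c_W=\sum_{w}\mathbf 1_{U\cap w^{-1}Uw}\]
and evaluate $\widehat{c_W}(\chi)$. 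For a linear character $\psi$ that is nontrivial exactly on the simple root subgroups indexed by a subset $J$, the sum should collapse, via the Bruhat decomposition and counting $|U\cap w^{-1}Uw|=q^{\Bell(w_0w)}$, to a unit times the index of the parabolic $|G/P_{J'}|$ for the complementary set $J'$. This exhibits every $v_\ell|G/P|$ as a divisor. The main obstacle is the non-linear characters of $U$. For these I would use the Gelfand--Graev/Harish-Chandra reduction: restrict $\St^K$ to a Levi $L_J$ and use that $\St$ restricted along parabolic restriction is $\St_{L_J}$. This is an induction on $n$ showing the eigenvalues are always, up to units, of the form $|G/P|$. Finally, the equivalence with $q$-multinomial coefficients is immediate, since $|G/P_\lambda|=\binom{n}{\lambda_1,\ldots,\lambda_m}_q$.

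\emph{Part (3).} The value $\kappa=v_\ell|G/B|$ is the maximal divisor, attained at the trivial character of $U$, so $I(\kappa)=M(\kappa)$ is the last nonzero layer. I would show $I(\kappa)$ is the image of the composite
\[\St^{\mathcal O}\to(\St^{\mathcal O})^*\cong\St^{\mathcal O},\]
rescaled by $\ell^{-\kappa}$. Concretely, it is the $kG$-submodule generated by the reduction of $\ell^{-\kappa}\sum_u u\e$, a $U$-fixed vector. Any nonzero submodule of $\St^k$ has nonzero $U$-invariants, and $(\St^k)^U$ is one-dimensional since $\St^k|_U$ is the regular representation. Hence the socle is simple and is generated by the unique $U$-fixed line, which is exactly that vector, so $\mathrm{soc}\,\St^k=I(\kappa)=M(\kappa)$.
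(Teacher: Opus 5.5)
The paper gives no argument for this statement; it simply quotes Gow, so your opening remark is exactly what the paper does. The direct proof you sketch, however, has a genuine gap in Part (2).

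\textbf{Part (2).} Your key claim, that $c_W$ is a class function on $U$, is false for $n\ge 3$. For $n=3$, the elements $u=I+aE_{12}$ and $u'=I+aE_{12}+cE_{13}$ (with $a,c\neq0$) are conjugate in $U$, by $I+sE_{23}$ with $s=-c/a$. Yet $c_W(u)=3$ (the $\sigma$ with $\sigma(1)<\sigma(2)$), while $c_W(u')=2$ (the $\sigma$ with $\sigma(1)=1$).

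Hence $c=\sum_u c_W(u)u=\sum_w\sum_{u\in U\cap w^{-1}Uw}u$ is not central in $\mathcal OU$. The Wedderburn decomposition of $\mathcal OU$ (available since $\ell\nmid|U|$) still block-diagonalises the Gram matrix over $\mathcal O$. But the block for $\chi$ is right multiplication by the non-scalar matrix $\rho_\chi(c)$, and what controls the filtration is its Smith form, not $\chi(c)/\chi(1)$.

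Concretely, take a nonlinear character $\chi$ of degree $q$ of the $n=3$ unitriangular group. Then $\chi(c)/\chi(1)=3$. In fact $\rho_\chi(c)=I+qP_1+qP_2$, where $P_1,P_2$ are the rank-one idempotents onto the distinct lines fixed by $\{I+tE_{12}\}$ and $\{I+tE_{23}\}$. Its elementary divisors are $1$ (with multiplicity $q-1$) and $1+q+q^2$.

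So for $\ell=3$, $q=8$ your recipe predicts a nonzero layer $M(1)$. But the $3$-valuations of the parabolic indices $1,73,657$ are $0,0,2$. Showing that these ``eigenvalues'' are parabolic indices therefore aims at a false statement, and the Harish-Chandra sentence does not supply the missing computation of the nonlinear blocks.

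What survives is the ``if'' half of (2), via linear characters. There one small correction is needed: with $P_J=BW_JB$, one gets $\psi(c)=|G/P_J|$ for $J$ the set of simple roots on which $\psi$ is nontrivial, not its complement. Otherwise the trivial character would give $1$ rather than $|G/B|$, contradicting your Part (3).

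\textbf{Part (3).} This part depends on the missing half, because the maximality of $\kappa$ is exactly what you took from (2). Write $\Lambda=\St^{\mathcal O}$, $\Lambda^\#$ for its $f$-dual, and $\Lambda^i$ for the lattice defining $I(i)$. Maximality can be obtained independently from the proof of Theorem~\ref{thm-gowfilt}. There the Gram matrix of $f$ equals $|G/B|$ times the inverse of the integral Brylawski--Varchenko Gram matrix. So every elementary divisor divides $|G/B|$, and $\Lambda^\kappa=\ell^\kappa\Lambda^\#$.

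Two further repairs are needed.
\begin{enumerate}
\item The relevant vector is $\sum_u u\e$ itself. It lies in $\Lambda^\kappa$ because $f(\sum_u u\e,v\e)=|G/B|$ for all $v$. The element $\ell^{-\kappa}\sum_u u\e$ is not in the lattice.
\item The claim that $I(\kappa)$ is generated by this vector must be proved. The functional $v\e\mapsto1$ reduces to an element of $(\St^k)^*$ that pairs to $|U|\neq0$ with the socle. Hence it lies outside the unique maximal submodule of $(\St^k)^*$, and by Nakayama $\Lambda^\#=\mathcal OG\cdot|G/B|^{-1}\sum_u u\e$.
\end{enumerate}
You should also justify ``every nonzero submodule of $\St^k$ has nonzero $U$-invariants'' by Frobenius reciprocity, using $\St^k\subseteq k[G/B]$ and $\ell\nmid|U|$.

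\textbf{Part (1).} This is fine, except that for $\ell=2$ a symmetric form over $\mathcal O$ need not be diagonalisable. Use the two-basis Smith form, as in Lemma~\ref{lem-dualfilt}.
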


As the Steinberg module $\St^k(\FM_q^n)$ can be viewed as a submodule of $kG(\sum_{b \in B} b)$, the composition factors are examples of \textit{unipotent} representations of $G$, which James parametrized by partitions of $n$.  Building on results of Tinberg, Hiss, Khammash and Gow, Geck~\cite{Geck} identified the partition corresponding to the socle of the Steinberg module as follows.  Let $ \mathrm{char}~k = \ell >0$ and 
\[ e := \min \{ i \geq 2~|~1+q+q^2+\ldots+ q^{i-1} \equiv 0~ \mathrm{mod}~ \ell\}.\]

\begin{theorem}\label{thm-geck}\cite[3.6]{Geck}
The socle of $\St^k(\FM_q^n)$ is labelled by the partition $\mu_0 = \mu(n,e)$ of $n$ given by $m+1$ with multiplicity $r$ and $m$ with multiplicity $e-r-1$, where 
\[n = (e - 1)m + r,~\mathrm{for}~0 \leq r < e - 1.\]
\end{theorem}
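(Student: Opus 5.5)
The plan is to reduce the statement to a question about the Iwahori--Hecke algebra $\mathcal H = \End_{kG}(kG b_B)$, where $b_B = \sum_{b\in B} b$, using the Dipper--James theory of unipotent modules. Write $Y = kG b_B \cong \mathrm{Ind}_B^G k$. Recall that $\mathcal H \cong \mathcal H_{k,q}(\mathfrak S_n)$ is the Hecke algebra with parameter $q \in k$. The functor $\Hom_{kG}(Y,-)$ sends the James unipotent simple modules $D(\lambda)$ lying in the head of $Y$ to the simple $\mathcal H$-modules $D^{\lambda}$, with $\lambda$ running over $e$-regular partitions, where $e$ is as defined above, i.e.\ the multiplicative order of $q$ in $k$, or $\ell$ if $q \equiv 1$.

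\textbf{Step 1.} By Steinberg's description $\St^k(\FM_q^n) = kG\e \subseteq Y$. Its socle $L := M(\kappa)$ is simple by Gow's result [4.5] recalled above. I would check that $L$ is a simple submodule of $Y$ which also occurs in the head of $Y$. For this I would use that $Y$ is self-dual and that $L$ is self-dual by [4.3], so $L = D(\lambda)$ for an $e$-regular $\lambda$. Identifying $\lambda$ then amounts to computing the $\mathcal H$-module $\Hom_{kG}(Y,L) = L^B$.

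\textbf{Step 2.} Compute $(\St^k)^B$. The basis $\{u\e\}_{u\in U}$ shows that $(\St^k)^B$ is one-dimensional, spanned by $b_B\e$, up to a scalar. The Hecke generators $T_s = b_B s b_B/|B|$ act on it by $-1$, which follows from the factor $\sum \sgn(\sigma)\sigma$ in $\e$. So $\Hom_{kG}(Y,\St^k)$ is the sign module $\mathrm{sgn}_{\mathcal H}$. Since $b_B\e$ generates $\St^k$ and lies in every nonzero submodule (Steinberg/Gow: $\St^k$ has simple socle containing the $B$-fixed line), we get $L^B = (\St^k)^B$. Hence $\Hom_{kG}(Y,L) \cong \mathrm{sgn}_{\mathcal H}$, and $\lambda$ is the $e$-regular label of the one-dimensional sign representation of $\mathcal H_{k,q}(\mathfrak S_n)$.

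\textbf{Step 3.} Identify $\mathrm{sgn}_{\mathcal H} \cong D^{\mu}$, the head of the Specht module $S^{\mu}$. The sign module is $S^{(1^n)}$, so one needs the ``Mullineux-type'' label of $D^{(1^n)'}$. Concretely, I would find the unique $e$-regular $\mu$ such that $D^\mu$ is one-dimensional with every $T_s$ acting by $-1$. The approach is to look for the $\mu$ for which the element $x_\mu y_{\mu'}$ (the Murphy/Dipper--James construction) generates a sign-type line. Equivalently, $\mu$ should be the most dominant $e$-regular partition with $[S^\mu : \mathrm{sgn}] \neq 0$. The candidate is the partition with $e-1$ rows as equal as possible, namely $\mu(n,e)$. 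Its conjugate's Young subgroup $\mathfrak S_{\mu'}$ has all parts of size $\le e-1$, so the $q$-factorials $[\mu'_j]_q!$ are invertible in $k$. The relevant Gram/norm computation on the $q$-antisymmetrizer is then nonzero, which shows the sign line survives in the head of $S^\mu$. Any $e$-regular partition dominating $\mu$ must have more than $e-1$ rows, or rows that are less balanced, and one checks that the corresponding $q$-integer $[e]_q = 0$ kills the pairing.

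\textbf{Main obstacle.} Step 3 is the main obstacle: the combinatorial verification that the sign module of $\mathcal H$ is $D^{\mu(n,e)}$. If the direct Gram-determinant argument becomes unwieldy, I would instead invoke the known result (James--Mathas, via the Mullineux map / $e$-regularization) that $\mathrm{sgn}_{\mathcal H} = D^{(1^n)^R}$, where $R$ denotes $e$-regularization. I would then check that the $e$-regularization of $(1^n)$, obtained by sliding nodes up ladders of slope $e-1$, is exactly the partition with $r$ parts equal to $m+1$ and $e-1-r$ parts equal to $m$, where $n = (e-1)m + r$.
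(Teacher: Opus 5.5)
The paper does not prove this statement; it quotes it from Geck. Your architecture is essentially Geck's argument: the socle of $\St^k(\FM_q^n)$ is a principal-series module whose Hecke-algebra module is the sign character, and the sign character is $D^{(1^n)^R}$ in the Dipper--James labelling. Your ladder computation is also right: the node in ladder $l$ moves to row $((l-1)\bmod (e-1))+1$, which gives $r$ rows of length $m+1$ and $e-1-r$ rows of length $m$. Several steps need repair, however.

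\textbf{Step 2.} You have $b_B\e=|T|\,b_U\e$ with $|T|=(q-1)^n$.
\begin{itemize}
\item This vector vanishes when $\ell\mid q-1$, which is exactly the case $e=\ell$, and $b_Bsb_B/|B|$ is meaningless there. Use $v_0=b_U\e$ instead. It spans $\St^k(\FM_q^n)^B$, because $\St^k|_U\cong kU$ via the basis $\{u\e\}$ and $T$ fixes $\e$. Define $T_s$ by the sum over $BsB/B$, or compute over a $p$-modular system and reduce.
\item Your assertion that this vector generates $\St^k$ is false. For $GL_2(\FM_q)$ with $\ell\mid q+1$ odd, $\St^k$ is uniserial with trivial socle and cuspidal head, so $v_0$ generates only the socle.
\item What you actually need, and only cite, is that $v_0$ lies in every nonzero submodule $X\subseteq\St^k$. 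This has a short proof. We have $\St^k\subseteq kGb_B\subseteq kGb_U\cong\mathrm{Ind}_U^G k$, so $0\neq\Hom_{kG}(X,\mathrm{Ind}_U^Gk)\cong\Hom_{kU}(X,k)$. Since $\ell\nmid|U|$, this gives $X^U\neq0$, and $X^U\subseteq(\St^k)^U=kv_0$.
\item This argument proves that the socle $L$ is simple, which Gow's [4.5] as you cite it does not give. It also gives $\Hom_{kG}(Y,L)\cong L^B=kv_0\cong\mathrm{sgn}_{\mathcal H}$.
\item It makes Step 1 unnecessary: a nonzero map $Y\to L$ is surjective, so $L$ lies in the head of $Y$ without any self-duality argument.
\end{itemize}

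\textbf{Step 3.} The direct argument does not work.
\begin{itemize}
\item The partitions dominating $\mu(n,e)$ are exactly those with at most $e-1$ rows, not more, since dominance can only decrease the number of rows.
\item So every such partition has all columns of length $\le e-1$, and your invertibility-of-$q$-factorials criterion applies to all of them equally; no factor $[e]_q$ ever appears. For instance, it would put $\mathrm{sgn}$ in the head of $S^{(n)}$, the index representation, which fails unless $e=2$.
\item A nonzero norm on one vector would not show that the radical of the form on $S^\mu$ has codimension one in any case.
\end{itemize}
So rely on your fallback. The $q$-analogue of James's regularization theorem says $D^{\lambda^R}$ is a composition factor of $S^\lambda$. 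Applied to the one-dimensional simple module $S^{(1^n)}=\mathrm{sgn}$, it gives $\mathrm{sgn}\cong D^{(1^n)^R}$. Combine this with the Dipper--James identification $\Hom_{kG}(Y,D(\lambda))\cong D^\lambda$ for $e$-regular $\lambda$ and with Dipper's Hom-functor bijection to conclude $L\cong D(\mu(n,e))$. Keep the Dipper--James conventions throughout. In Murphy's cell-module conventions the sign module is $D^{(1^n)}$, and you would read off the wrong partition.
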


\begin{remark}\label{rem-kham} For example, if $\ell$ divides $q+1$, then $e=2, m=n, r=0$ and the socle is the trivial module, a fact first proved by Khammash~\cite{Khammash} and re-proved by Gow~\cite[Theorem 4.7]{Gow}.
\end{remark}

\subsection{Homological description of the Steinberg module and dual bases} To relate Gow's Jantzen filtration with the Jantzen filtration of $\Delta_{V,k}$, we consider a different description of the Steinberg module.  For any $\FM_q$-vector space $V$, let $\cT(V)$ denote the Tits building of $V$, meaning the geometric realization of the poset of proper non-zero subspaces of $V$.  A theorem of Solomon states that $\cT(V)$ is homotopy equivalent to a wedge of $(\dim V-2)$-spheres and so the reduced homology of $\cT(V)$ with coefficients in any ring $k$ is concentrated in degree $\dim V-2$.

More concretely, $\tilde{H}_{\dim V-2}(\cT(V);k)$ can be obtained as the $k$-submodule of the vector space $\Fl(V)$ with basis the set of full flags $(V \gtrdot V_{\dim V-1} \gtrdot \ldots \gtrdot V_1 \gtrdot \{0\})$, given by the kernel of the map $\partial$ to the space of flags in $V$ with a gap defined by:
\[ \partial(V  \gtrdot V_{\dim V-1} \gtrdot \ldots  \gtrdot V_1 \gtrdot \{0\}) = \sum_{i=1}^{n-1} (-1)^i (V \gtrdot \ldots \gtrdot \widehat{V_{i}} \gtrdot \ldots \gtrdot \{0\}). \]

Fix a choice of isomorphism $V \cong \FM_q^n$.  Let $e_1, \ldots, e_n$ be the standard basis for $\FM_q^n$, let $G = GL(\FM_q^n)$ and $B < G$ be the subgroup of upper-triangular matrices.  Note that $G$ acts on $\Fl_n = \Fl(\FM_q^n)$ and that we may identify the $G$-modules $kG(\sum_{b \in B} b)$ and $\Fl_n$ by sending $(\sum_{b \in B} b)$ to the standard flag $(\FM_q^n \gtrdot \ldots \gtrdot \langle e_{1},e_{2}  \rangle \gtrdot \langle e_{1} \rangle \gtrdot \{0\} )$.  Under this isomorphism, Steinberg's element $\e$ gets sent to
\[ A_1 = \sum_{\sigma \in \mathfrak S_n} \sgn(\sigma) \sigma \cdot (\FM_q^n \gtrdot \ldots \gtrdot \langle e_{1},e_{2}  \rangle \gtrdot \langle e_{1} \rangle \gtrdot \{0\} ).\]
\[= \sum_{\sigma \in \mathfrak S_n} \sgn(\sigma)  (\FM_q^n \gtrdot \ldots \gtrdot \langle e_{\sigma(1)},e_{\sigma(2)}  \rangle \gtrdot \langle e_{\sigma(1)} \rangle \gtrdot \{0\} ).\]
More generally, for each $u \in U$, $u\e$ is sent to 
\[ A_u = \sum_{\sigma \in \mathfrak S_n} \sgn(\sigma) u \cdot (\FM_q^n \gtrdot \ldots \gtrdot \langle e_{\sigma(1)},e_{\sigma(2)}  \rangle \gtrdot \langle e_{\sigma(1)} \rangle \gtrdot \{0\} )\]
\[  = \sum_{\sigma \in \mathfrak S_n} \sgn(\sigma)  (\FM_q^n \gtrdot \ldots \gtrdot \langle u_{\sigma(1)},u_{\sigma(2)}  \rangle \gtrdot \langle u_{\sigma(1)} \rangle \gtrdot \{0\} ),\]
where $u_i = u e_i$ denotes the $i$th column of $u$ for $1\leq i \leq n$.  The classes $A_u$ are known as apartment classes and form a basis of $\tilde{H}_{n-2}(\cT(\FM_q^n);k)$.

Recall that $\Delta_{\{0\}}$  is isomorphic to the flag space $\Fs^+_{\{0\}} = \Fl^+_{\{0\}}/(FS)$.  Note also that the action of $G$ on $\FM_q^n$ induces a linear action on $\Delta_{\{0\}}^{\FM_q^n}$.   More generally, for any subspaces $V\subseteq W \subseteq \FM_q^n$, $GL(W/V)$ acts on the corresponding weight space $\Delta_{V}^{W}$ of $\Delta_V$.

Endowing $\Fl^+_{\{0\}}$ with the symmetric bilinear form $(~,~)$ for which the set of flags form an orthonormal basis, we identify $\Fl^+_{\{0\}}$ with its dual.  Let $\Fl_n \subset \Fl^+_{\{0\}}$ denote the subspace spanned by full flags.  The linear dual of the Steinberg module $\St_n^k(\FM_q)^* = (\Ker \partial)^*$ is isomorphic to the quotient $\Fl_n^*/\Im (\partial^*)$ of $\Fl_n^*$ by the image of the adjoint $\partial^*$.  Under the identification $\Fl_n^* =\Fl_n$, the image of $\partial^*$ is the $k$-submodule of $\Fl_n$ generated by the relation (FS).  We conclude that there is an isomorphism of $G$-modules $\Delta^{\FM_q^n}_{\{0\}} \cong \St_n^k(\FM_q)^*$ (and more generally of $GL(V/W)$-modules $\Delta^V_{W} \cong \St^k(V/W)^*$).

For each $u \in U$, let $F_u \in \Delta^{\FM_q^n}_{\{0\}}$ be the element represented by the full flag 
\[F_u = \sgn(w_0) u \cdot (\FM_q^n \gtrdot \ldots \gtrdot \langle e_{n-1},e_{n}  \rangle \gtrdot \langle e_{n} \rangle \gtrdot \{0\} )\] 
\[=  \sgn(w_0) (\FM_q^n \gtrdot \ldots \gtrdot \langle u_{n-1},u_{n}  \rangle \gtrdot \langle u_{n} \rangle \gtrdot \{0\} ),\]
or in other words, the flag whose $i$th part is the span of the last $i$ columns vectors of $u$.  For $u,u' \in U$, consider the pairing $(F_u, A_{u'})$.  Observe that the pairing 
\[ \left((\FM_q^n \gtrdot \ldots \gtrdot \langle u_{n-1},u_{n}  \rangle \gtrdot \langle u_{n} \rangle \gtrdot \{0\} ), (\FM_q^n \gtrdot \ldots \gtrdot \langle u'_{\sigma(1)},u'_{\sigma(2)}  \rangle \gtrdot \langle u'_{\sigma(1)} \rangle \gtrdot \{0\} )\right) \]
is equal to zero unless $\sigma = w_0$ and $u=u'$.  We conclude that $(F_u, A_{u'}) = \delta_{u,u'}$ and have shown: 

\begin{lemma}\label{lem-dualbasis}
Under the identification $\Delta^{\FM_q^n}_{\{0\}} \cong (\St^k(\FM_q^n))^*$ above, $\{F_u\}_{u\in U}$ is basis of $\Delta^{\FM_q^n}_{\{0\}}$ and dual to Steinberg's basis $\{u\e\}_{u\in U}$ of $\St^k(\FM_q^n)$.
\end{lemma}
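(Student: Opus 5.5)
The argument combines the duality $\Delta^{\FM_q^n}_{\{0\}} \cong (\Ker\partial)^* = \St^k(\FM_q^n)^*$ established just above with Steinberg's theorem that $\{u\e\}_{u\in U}$ is a basis of $\St^k(\FM_q^n)$. It has three steps.

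\textbf{Step 1: the pairing.} I work with the pairing between $\Fl_n/\Im(\partial^*)$ and $\Ker\partial$ induced by the orthonormal form $(~,~)$ on $\Fl_n$. This pairing is well defined because $(\partial^* x, y) = (x, \partial y) = 0$ for $y \in \Ker\partial$. Over a field it is perfect, which is precisely the identification $\Fl_n/\Im(\partial^*) \cong (\Ker\partial)^*$. Under the isomorphism $kG(\sum_{b\in B} b) \cong \Fl_n$, the element $u\e$ corresponds to the apartment class $A_u \in \Ker\partial$. So it suffices to compute the pairings $(F_u, A_{u'})$ for $u,u' \in U$.

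\textbf{Step 2: the combinatorial computation.} The flags $F_u$ and $A_{u'}$ are expanded in the orthonormal basis of full flags. Thus $(F_u, A_{u'})$ is $\sgn(w_0)\sum_\sigma \sgn(\sigma)$ times the indicator that the flag with $i$th term $\langle u_{n-i+1},\dots,u_n\rangle$ equals the flag with $i$th term $\langle u'_{\sigma(1)},\dots,u'_{\sigma(i)}\rangle$.

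The claim to verify is that this equality forces $\sigma = w_0$ and $u = u'$. I would argue by downward induction on the column index.
\begin{enumerate}
\item First compare the lines. We have $\langle u_n\rangle = \langle u'_{\sigma(1)}\rangle$. The vector $u_n$ has last coordinate $1$. The vector $u'_j$ has entry $1$ in position $j$ and zeros below it. Hence $\sigma(1) = n$, and since both vectors have last coordinate $1$, $u_n = u'_n$.
\item Next suppose inductively that $\sigma(i) = n-i+1$ and $u_{n-i+1} = u'_{n-i+1}$ for all $i < m$. The equality of $m$-dimensional subspaces gives $u'_{\sigma(m)} \in \langle u_{n-m+1},\dots,u_n\rangle$.
\item The vectors of that span with a nonzero coordinate in position $n-m+1$ or lower are spanned by $u_{n-m+1},\dots,u_n$, whose leading positions are $n-m+1,\dots,n$. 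Since $\sigma(m)$ is not among the indices already used, $\sigma(m) \le n-m+1$. A unipotent column $u'_j$ has leading position $j$, so $\sigma(m) = n-m+1$.
\item Then $u'_{n-m+1} - u_{n-m+1}$ lies in the span of $u_{n-m+2},\dots,u_n$ and has zeros in positions $\ge n-m+1$. This forces it to be $0$, since each of $u_{n-m+2},\dots,u_n$ has a $1$ in its own leading position, making them triangular.
\end{enumerate}

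So $\sigma = w_0$ and $u = u'$, and the only surviving term contributes $\sgn(w_0)\sgn(w_0) = 1$. This gives $(F_u, A_{u'}) = \delta_{u,u'}$. The only real content of the argument is this uniqueness-of-the-leading-vector bookkeeping.

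\textbf{Step 3: conclusion.} The $A_u$ form a basis of $\Ker\partial$, because they are the images of Steinberg's basis $\{u\e\}$. The pairing is perfect, so the matrix $\big((F_u, A_{u'})\big)$ being the identity shows that the classes $F_u$ are linearly independent in $\Fl_n/\Im(\partial^*)$. Their number is $|U| = \dim \Ker\partial = \dim \Delta^{\FM_q^n}_{\{0\}}$, so they form a basis, and by construction it is the dual basis to $\{u\e\}_{u\in U}$.

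The same argument works over $\Z$ because all pairings involved are integral. The main obstacle is the careful bookkeeping in Step 2.
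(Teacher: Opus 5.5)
Your proposal is correct and follows the paper's argument: expand $F_u$ and the apartment classes $A_{u'}$ in the orthonormal basis of full flags, and observe that the only surviving term is $\sigma=w_0$, $u=u'$, which gives $(F_u,A_{u'})=\delta_{u,u'}$. Your leading-position induction makes explicit the step the paper only asserts. One small repair is needed in your item 4: you have not justified that the difference lies in $\langle u_{n-m+2},\dots,u_n\rangle$, but you do not need that, since it already lies in $\langle u_{n-m+1},\dots,u_n\rangle$ and vanishes in all positions $\ge n-m+1$, so it is zero.
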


\subsection{Dual filtrations} Assume $\ell = \mathrm{char}~k>0$.  For any subspaces $V\subseteq W \subseteq \FM_q^n$, the bilinear form on $\Delta^W_{V,k}$ is invariant under the natural $GL(W/V)$-action. Thus the Jantzen filtration on $\Delta_{V,k}$ restricts to a $GL(W/V)$-module filtration on $\Delta^{W}_{V,k} \cong \St^k(W/V)^*$.  To relate this filtration with Gow's filtration, we use the following lemma.

\begin{lemma}\label{lem-dualfilt}
Suppose that $\Lambda$ is a free $\Z$-module of finite rank $r$ and $\Lambda^* = \Hom(\Lambda,\Z)$ is the dual lattice.  Suppose that $\phi: \Lambda \to \Lambda^*$ and $\psi: \Lambda^* \to \Lambda$ are linear maps corresponding to symmetric bilinear forms $\mathbf B$ and $\mathbf D$ on $\Lambda$ and $\Lambda^*$ respectively such that $\psi \circ \phi = c \cdot \mathrm{Id}_\Lambda$ for some integer $c\in \Z$.  

For a fixed prime number $p$, let $\kappa = \nu_p(c)$ and
\[\Lambda = \Lambda^0 \supseteq \Lambda^1 \supseteq \Lambda^2 \supseteq \ldots \qquad
\mathrm{and} \qquad \Lambda^* = \Lambda^*(0) \supseteq \Lambda^*(1) \supseteq \Lambda^*(2) \supseteq \ldots\]
be the filtrations defined by
\[ \Lambda^i = \{ x \in \Lambda~|~\phi(x) \in p^i \Lambda^*\}, \qquad \Lambda^*(i) = \{ y \in \Lambda^*~|~\psi(y) \in p^i \Lambda\}.\]
Let $\overline{\Lambda} := \Lambda \otimes_\Z \Z/p$ and $\overline{\Lambda}^* := \Lambda^* \otimes_\Z \Z/p \cong \Hom_{\Z/p}(\overline{\Lambda}, \Z/p)$ with the filtrations
\[\overline\Lambda = \overline\Lambda^0 \supseteq \overline\Lambda^1 \supseteq \overline\Lambda^2 \supseteq \ldots\]
and
\[\overline{\Lambda}^* = \overline{\Lambda}^*(0) \supseteq \overline{\Lambda}^*(1) \supseteq \overline{\Lambda}^*(2) \supseteq \ldots,\]
where $\overline\Lambda^i$ (resp. $\overline{\Lambda}^*(i)$) is the image in $\overline\Lambda$ (resp. $\overline{\Lambda}^*$) of $\Lambda^i \otimes \Z/p$ (resp. ${\Lambda}^*(i) \otimes \Z/p$).
Then the filtrations of $\overline{\Lambda}$ and $\overline{\Lambda}^*$ are dual in the sense that for any $j$
\[ (\overline\Lambda^j)^\perp = \overline{\Lambda}^*(\kappa+1-j). \]
\end{lemma}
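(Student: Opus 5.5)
The plan is to prove the containment $\overline{\Lambda}^*(\kappa+1-j) \subseteq (\overline\Lambda^j)^\perp$ directly from the symmetry of the forms, and then get equality by a dimension count using Smith normal form for $\phi$. Throughout I assume $c \neq 0$, so that $\kappa = \nu_p(c)$ is finite. Then $\phi$ is injective, and all the elementary divisors of $\phi$ are nonzero and divide $c$.

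\emph{Step 1 (containment).} Write $\langle y, x\rangle = y(x)$ for the evaluation pairing $\Lambda^*\times\Lambda\to\Z$.
\begin{itemize}
\item Symmetry of $\mathbf D$ gives $\langle y, \psi(z)\rangle = \mathbf D(y,z) = \langle z, \psi(y)\rangle$ for all $y,z \in \Lambda^*$.
\item Take $x \in \Lambda^j$ and write $\phi(x) = p^j x'$ with $x' \in \Lambda^*$.
\item Take $y \in \Lambda^*(\kappa+1-j)$ and write $\psi(y) = p^{\kappa+1-j} y'$ with $y' \in \Lambda$.
\item Using $\psi\circ\phi = c\cdot \mathrm{Id}$, we get $c\,\langle y, x\rangle = \langle y, \psi(\phi(x))\rangle = p^j\langle y,\psi(x')\rangle = p^j \langle x', \psi(y)\rangle \in p^{\kappa+1}\Z$.
\item Since $\nu_p(c)=\kappa$, it follows that $\langle y,x\rangle \in p\Z$.
\end{itemize}
Reducing mod $p$, the image of $\Lambda^*(\kappa+1-j)$ annihilates the image of $\Lambda^j$ under the induced perfect pairing $\overline\Lambda^*\times\overline\Lambda\to\Z/p$. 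For $j = 0$ the index $\kappa+1$ is handled by the same computation.

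\emph{Step 2 (dimensions).} Choose bases $e_1,\dots,e_r$ of $\Lambda$ and $f_1,\dots,f_r$ of $\Lambda^*$ putting $\phi$ in Smith normal form, so $\phi(e_i) = d_i f_i$ with $d_i \neq 0$. These bases need not be dual to each other, which is why Step 1 avoids coordinates.
\begin{itemize}
\item From $\psi(d_i f_i) = c\, e_i$ we get $\psi(f_i) = (c/d_i)\, e_i$, and in particular $d_i \mid c$.
\item Set $a_i = \nu_p(d_i)$, so $0 \le a_i \le \kappa$, and $\nu_p(c/d_i) = \kappa - a_i$.
\item In these coordinates $\Lambda^j = \bigoplus_i p^{\max(0,\,j-a_i)}\Z e_i$. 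Its image in $\overline\Lambda$ is therefore spanned by the $\bar e_i$ with $a_i \ge j$, so $\dim \overline\Lambda^j = \#\{i : a_i \ge j\}$.
\item Likewise $\Lambda^*(m) = \bigoplus_i p^{\max(0,\,m-\kappa+a_i)}\Z f_i$. Its image is spanned by the $\bar f_i$ with $\kappa - a_i \ge m$, so $\dim\overline\Lambda^*(\kappa+1-j) = \#\{i : a_i \le j-1\} = r - \dim\overline\Lambda^j$.
\end{itemize}

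\emph{Conclusion.} Since the pairing $\overline\Lambda^*\times\overline\Lambda\to\Z/p$ is perfect, $\dim(\overline\Lambda^j)^\perp = r - \dim\overline\Lambda^j$. By Step 2 this equals $\dim\overline\Lambda^*(\kappa+1-j)$, and combined with the containment of Step 1 this gives the claimed equality.

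The main obstacle is that the Smith bases for $\phi$ are not dual bases, so the perpendicularity cannot simply be read off from coordinates. This is why I use the symmetry of $\mathbf D$ for the containment in Step 1 and reserve Smith normal form purely for the dimension count. The remaining care is bookkeeping at the edge cases $j = 0$ and $j > \kappa$, where one of the two filtration pieces is everything or zero; these are covered by the same formulas.
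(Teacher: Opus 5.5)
Your proof is correct, but it is organized differently from the paper's.

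\textbf{The paper's route.} The paper takes Smith bases $\{m_i\}$, $\{m'_i\}$ of $\Lambda$ with $\mathbf B(m_i,m'_j)=a_i\delta_{ij}$. In your notation this is $e_i=m_i$, $f_i={m'_i}^*$, $d_i=a_i$. It then uses the symmetry of $\mathbf B$ a second time to get $\phi(m'_i)=a_i m_i^*$, and hence $\psi(m_i^*)=(c/a_i)\,m'_i$. So $\Lambda^j$ is diagonal in $\{m_i\}$, while $\Lambda^*(j)$ is diagonal in the \emph{dual} basis $\{m_i^*\}$. The complement $(\overline\Lambda^j)^\perp$ can then be read off directly: it is the span of the $m_i^*\otimes 1$ with $\nu_p(a_i)<j$, equivalently with $\nu_p(c/a_i)\ge\kappa+1-j$. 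In other words, the obstacle you flag (Smith bases for $\phi$ are not dual bases) disappears after one more application of symmetry. The basis dual to your $e_i$ also diagonalizes $\psi$.

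\textbf{Your route.} You instead prove the inclusion $\overline\Lambda^*(\kappa+1-j)\subseteq(\overline\Lambda^j)^\perp$ without coordinates, using the symmetry of $\mathbf D$. You use Smith form only to count dimensions. The two symmetry hypotheses are interchangeable here: for $c\neq 0$ one has $\psi=c\,\phi^{-1}$ over $\mathbb Q$, so $\mathbf B$ is symmetric if and only if $\mathbf D$ is.

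\textbf{Comparison.} Your approach shows exactly where symmetry is needed (only for the inclusion; the dimension count uses none). It also states the hypothesis $c\neq 0$ explicitly, which the paper uses tacitly when dividing by $a_i$. The paper's approach instead produces a single pair of dual bases adapted to both filtrations at once. This gives slightly more explicit information, for instance bases of all the graded pieces. The price is the small extra symmetry trick needed to find those bases.
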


\begin{proof}
As $\Z$ is a principal ideal domain, we can choose bases $\{m_i\}_{1\leq i \leq r}$ and $\{m'_i\}_{1\leq i \leq r}$ of $\Lambda$ such that $\mathbf B(m_i,m_j') = a_i \delta_{i,j}$ for all $1\leq i,j\leq r$.  Let $\{m^*_i\}_{1\leq i \leq r}$ and $\{{m'_i}^*\}_{1\leq i \leq r}$ be the corresponding dual bases of $\Lambda^*$. By our definitions, then $\phi(m_i') = a_i m_i^*$ and $\phi(m_i) = a_i {m'_i}^{*}$.

Using the assumption $\psi \circ \phi = c \cdot \mathrm{Id}$, \[cm'_i=\psi(\phi(m'_i)) = \psi(a_i {m_i}^*) = a_i \psi ({m_i}^*),\] 
it follows that $\psi({m_i}^*) = \frac{c}{a_i} m'_i$ and $\mathbf D(m_i^*, {m'_j}^*) = \frac{c}{a_i} \delta_{i,j}$.

Now the set of $m_i \otimes 1$ with $\nu_p(a_i) \geq j$ is a basis for $\overline{\Lambda}^j$ (as is the set of $m_i' \otimes 1$ with $\nu_p(a_i) \geq j$).  In the same way, the set of $m_i^* \otimes 1$ with $\nu_p(c/a_i) \geq j$ is a basis for $\overline{\Lambda}^*(j)$.

Now consider the orthogonal subspace $(\overline{\Lambda}^j)^\perp \subseteq \overline{\Lambda}^*$. As $\{m_i \otimes 1\}_{1\leq i \leq r}$ and $\{m^*_i \otimes 1\}_{1\leq i \leq r}$ are dual bases,
\[ (\overline{\Lambda}^j)^\perp = \{m_i \otimes 1~|~\nu_p(a_i) \geq j \}^\perp = \mathrm{span}\{m_i^* \otimes 1 ~|~ \nu_p(a_i) < j \}. \]
Note that $\nu_p(a_i) < j $ if and only if $\nu_p(c/a_i) = \kappa - \nu_p(a_i) \geq \kappa +1 -j$.  Thus,
\[ (\overline{\Lambda}^j)^\perp = \mathrm{span}\{m_i^* \otimes 1 ~|~ \nu_p(c/a_i) \geq \kappa +1 -j \} = \overline{\Lambda}^*(\kappa+1-j), \]
as we wished to show.
\end{proof}

We note that to check the condition of the lemma, it is enough to produce a basis $\{m_i\}_{1\leq i \leq r}$ of $\Lambda$ such that if $\{m_i^*\}_{1\leq i \leq r}$ is the dual basis of $\Lambda^*$, then the Gram matrix of $\mathbf B$ with respect to $\{m_i\}_{1\leq i \leq r}$ times the Gram matrix of $\mathbf D$ with respect to $\{m_i^*\}_{1\leq i \leq r}$ is an integral multiple of the identity matrix.

\begin{theorem}\label{thm-gowfilt}
The restriction of the filtration \[\Delta_{\{0\},k} = (\Delta_{\{0\},k})^0 \supset (\Delta_{\{0\},k})^1 \supseteq \ldots \supseteq (\Delta_{\{0\},k})^N=0,\]
to the $\FM_q^n$-weight space $\Delta^{\FM_q^n}_{\{0\},k} \cong \St^k(\FM_q^n)^*$ is dual to Gow's Jantzen filtration of $\St(\FM_q^n)$,
\[ \St^k(\FM_q^n) = I(0) \supseteq I(1) \supseteq I(2) \supseteq \ldots , \]
in the sense that $I(i)^\perp = (\Delta^{\FM_q^n}_{\{0\},k})^{\kappa+1-i}$, where $\kappa:= \nu_p(GL(\FM_q^n)/B)$.  
\end{theorem}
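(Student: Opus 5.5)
We will deduce the theorem from Lemma~\ref{lem-dualfilt}, taking $\Lambda$ to be the integral Steinberg lattice $\St^\Z(\FM_q^n)$ with basis $\{u\e\}_{u\in U}$, equipped with Gow's form $\mathbf B = f$. Then $\Lambda^* \cong \Delta^{\FM_q^n}_{\{0\}}$ (integrally), with dual basis $\{F_u\}_{u \in U}$ by Lemma~\ref{lem-dualbasis}, and $\mathbf D$ is the restriction of $\langle~,~\rangle_{\{0\}}$ to the top weight space. By definition, Gow's filtration $I(i)$ is the reduction of $\Lambda^i$, and the Jantzen filtration on $\Delta^{\FM_q^n}_{\{0\},k}$ is the reduction of $\Lambda^*(i)$. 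The orthogonal decomposition of $\Delta_{\{0\}}$ into weight spaces lets us compute the filtration weight space by weight space. The identity $I(i)^\perp = (\Delta^{\FM_q^n}_{\{0\},k})^{\kappa+1-i}$ is then exactly the conclusion of the lemma, provided we verify the hypothesis $\psi\circ\phi = c\cdot \mathrm{Id}$ with $\nu_\ell(c)=\kappa$. The natural candidate is $c = |G/B| = \prod_{i=1}^n \frac{q^i-1}{q-1}$. Extending scalars from $\Z$ to $\Z_{(\ell)}$ or to $k$ is harmless. We also note that $\ell\nmid q$ is implicitly needed for $\nu_\ell(c)=\kappa$ to be meaningful.

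By the remark after Lemma~\ref{lem-dualfilt}, it suffices to show that the Gram matrices satisfy $C\cdot D = c\,\mathrm{Id}$. Here $C_{u,u'} = c_W(u^{-1}u')$ by \eqref{eq-f}, and $D_{u,u'} = \langle F_u, F_{u'}\rangle$ is given by \eqref{eq-form} with $\dim V = 0$: namely $\sgn(\sigma)q^{\Bell(w_0\sigma)}$, where $\sigma$ is the relative position of the two flags spanned by the last columns of $u$ and $u'$. Rather than multiplying the matrices directly, we will interpret them via maps. The map $\psi\circ\phi$ sends $u\e$ to $\sum_{u'} c_W(u^{-1}u')\, \psi(F_{u'}^{\vee})$. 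It is cleaner to observe that $\phi$ is the composite $\Lambda = \Ker\partial \hookrightarrow \Fl_n \to \Fl_n/\Im\partial^* = \Lambda^*$, where the second map is the identity on flags. Indeed, Gow's form on apartment classes is the restriction of the standard orthonormal form on flags, since Gow's form on $kG$ is $\delta_{g,h}$ and summing over $B$ and dividing by $|B|$ gives the flag form. Dually, $\psi : \Fl_n/(FS) \to \Ker\partial$ is induced by the form $\langle~,~\rangle_{\{0\}}$, which by its definition is computed by the product $(\{0\},W_1,\ldots)\cdot(V_l,\ldots,\{0\})$ in $S(M)$.

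Thus the key claim is that the composite endomorphism $\Ker\partial \to \Fl_n \to \Fl_n/(FS) \xrightarrow{\psi} \Ker\partial$ equals multiplication by $|G/B|$. We will prove this using $G$-equivariance. Both forms are $G$-invariant, so $\psi\circ\phi$ is a $G$-endomorphism of $\St^{\Q}$. The Steinberg module is absolutely irreducible in characteristic $0$, so the composite is a scalar $c$. To identify the scalar, we compute a single entry: the coefficient of $u\e$ in $\psi\phi(\e)$. Equivalently, $c = \sum_{u'} C_{1,u'}D_{u',1}$. Alternatively, we can compute $\det(C)\det(D) = c^{|U|}$ and use the known value $\det C$ from Gow together with the Brylawski--Varchenko determinant (Theorem~\ref{thm-det}). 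The most direct route is a one-entry computation: $\sum_{u'\in U} c_W(u') \langle F_{u'},F_1\rangle$. Here we stratify $U$ by Bruhat cells $U_w = U\cap B w B w^{-1}$-type pieces, and the relative position of $F_{u'}$ and $F_1$ is governed by the Bruhat cell of $u'$.

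The main obstacle is this last evaluation, which amounts to showing
\[\sum_{u\in U} c_W(u)\,\sgn(\sigma(u))\,q^{\Bell(w_0\sigma(u))} = \sum_{w\in\mathfrak S_n} q^{\Bell(w)},\]
the Poincaré polynomial, which equals $|G/B|$. If the direct combinatorics becomes unwieldy, we will fall back on the irreducibility argument in the following form: the scalar $c$ satisfies $c^{|U|} = \det C\cdot\det D$, and $\det C$ is known from Gow's work. As a check, this would also confirm that $\nu_\ell(c)$ is the index $\kappa$ at which the socle $M(\kappa)$ appears.
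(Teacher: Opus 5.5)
Your reduction is sound and differs from the paper's. The identification of $\phi$ with $\Ker\partial\hookrightarrow\Fl_n\to\Fl_n/\Im\partial^*$ is correct, because Gow's $f$ is the restriction of the orthonormal flag form. Both forms are $G$-invariant, and the Steinberg module over $\mathbb{Q}$ is absolutely irreducible. So Schur's lemma does give $\psi\circ\phi=c\cdot\mathrm{Id}$ with $c\in\Z$.

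\textbf{The gap.} The proposal stops at the one step that carries the content: determining $c$, or at least showing $\nu_\ell(c)=\kappa$. Without it, Lemma~\ref{lem-dualfilt} only gives a duality with an unknown index shift.
\begin{itemize}
\item The identity you isolate, $\sum_{u\in U}c_W(u)\sgn(\sigma(u))q^{\Bell(w_0\sigma(u))}=\sum_{w}q^{\Bell(w)}$, is true: it is the $(1,1)$ entry of the matrix identity the paper proves. But your Bruhat-cell stratification of it is only gestured at.
\item The determinant fallback presupposes a closed formula for $\det C$ that you neither derive nor locate in Gow's work.
\end{itemize}

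\textbf{How to close it.} Since $c$ is already known to be a scalar, test it on a convenient pair of vectors rather than on a Gram entry in the basis $\{u\e\}$. Unwinding the definitions, $\psi\phi(x)=cx$ for $x\in\Ker\partial$ means $\langle x,y\rangle_{\{0\}}=c\,(x,y)$ for every full flag $y$, with both forms evaluated on representatives in $\Fl_n$.

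Let $X_\tau$ be the flag whose $i$th term is $\langle e_{\tau(1)},\ldots,e_{\tau(i)}\rangle$, so that $A_1=\sum_\tau\sgn(\tau)X_\tau$. Take $y=X_1$; then $(A_1,X_1)=1$. From the definition in Corollary~\ref{cor-flagmult}, the relative position of $X_\tau$ and $X_1$ is exactly $\tau$. Hence by \eqref{eq-form}
\[ \langle A_1,X_1\rangle_{\{0\}}=\sum_{\tau\in\mathfrak S_n}\sgn(\tau)\sgn(\tau)\,q^{\Bell(w_0\tau)}=\sum_{w\in\mathfrak S_n}q^{\Bell(w)}=|G/B|. \]
So $c=|G/B|$, and the lemma applies with $\kappa=\nu_\ell(|G/B|)$.

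\textbf{Comparison with the paper.} With this step added, your argument is a genuine alternative.
\begin{itemize}
\item The paper computes $\mathbf B^{-1}$ entrywise from Brylawski--Varchenko's inverse formula; with $a\equiv 1$, every full flag has weight $|G/B|$. It then matches this with Gow's formula \eqref{eq-f}, after noting that only the terms $\sigma=\tau$ survive. This uses no representation theory, but it needs the BV inverse theorem and Gow's $c_W$ formula.
\item Your version needs neither of those. It replaces them with Steinberg's irreducibility theorem and a single evaluation of \eqref{eq-form}. The price is that it depends on irreducibility of the weight space, whereas the BV inverse is available for any matroid.
\end{itemize}
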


\begin{proof}  By Lemma~\ref{lem-dualbasis} and the discussion above, to apply Lemma~\ref{lem-dualfilt} it suffices to show that the product of the Gram matrix $\mathbf B$ for $\langle~,~\rangle_{\{0\}}^{\FM_q^n}$ with respect to the basis $\{F_u\}_{u\in U}$ times the Gram matrix $\mathbf G$ for Gow's symmetric form on $\St^k(\FM_q^n)$ with respect to the basis $\{u\e~|~u\in U\}$ is $|GL(\FM_q^n)/B|$.

For $g \in G$, let $F(g)$ denote the flag
\[ F(g) = (\FM_q^n \gtrdot \ldots \gtrdot \langle ge_{n-1},ge_{n}  \rangle \gtrdot \langle ge_{n} \rangle \gtrdot \{0\} ) \]

By~\cite{BV}, the inverse matrix of $\mathbf B$ is the Gram matrix for the pairing on $\St_n^k(\FM_q)$ defined by
\[ C(u\e,u'\e) = C(\e,u^{-1}u'\e ) = \sum_{\substack{\sigma, \tau \in \mathfrak S_n, F \\  F(\sigma) = F(u^{-1}u' \tau) = F}} \frac{\sgn(\sigma) \sgn(\tau)}{a(F)},\]
where the sum is over all flags $F$ and pairs of permutations $\sigma,\tau \in \mathfrak S_n$ such that $F(\sigma) = F(u^{-1}u' \tau) = F$.  Here we have set Brylawski--Varchenko's weight function $a$ on the set of elements of $PG(n-1,q)$ to be identically 1, so that $a(F)$ is the product of the number of elements in each flat of $F$, which is $\prod_{i=1}^{n} \frac{q^i-1}{q-1} = |GL_n(\FM_q)/B(\FM_q)|$.

Notice that $F(\sigma)$ is the flag generated by a permutation of the standard basis and as $u^{-1}u'$ is upper triangular, the equality $F(\sigma) = F(u^{-1}u' \tau)$ can hold only if $\sigma = \tau$.  Thus the sign of every term is positive and each term contributes $|GL_n(\FM_q)/B(\FM_q)|^{-1}$.  The number of terms is the number of $\sigma \in \mathfrak S_n$ such that 
$F(\sigma) = F(u^{-1}u' \sigma)$.  This condition holds if and only if $\sigma^{-1} u^{-1}u' \sigma \in U$. We conclude by equation (\ref{eq-f}) that
\[ C(u\e,u'\e) = \frac{c_W(u^{-1}u')}{|GL_n(\FM_q)/B(\FM_q)|} = \frac{f(u\e,u'\e)}{|GL_n(\FM_q)/B(\FM_q)|}.\]
Thus the inverse matrix of $B$ is the Gram matrix of $f$ divided by $|GL_n(\FM_q)/B(\FM_q)|$.  We may therefore apply Lemma~\ref{lem-dualfilt} and the theorem follows.
\end{proof}

\begin{corollary} The $\FM_q^n$-weight space of the $i$th successive quotient $(\Delta_{\{0\},k})^{i}/(\Delta_{\{0\},k})^{i+1}$ of the Jantzen filtration of $\Delta_{\{0\},k}$ is isomorphic as a $GL_n(\FM_q)$-module to the successive quotient $M(\kappa-i)$ of Gow's Jantzen filtration of $\St^k(\FM_q^n)$, where $\kappa$ is the $\ell$-valuation of the number of full flags in $\FM_q^n$.

In particular, the $\FM_q^n$-weight space of the simple module $L_{\{0\},k} = (\Delta_{\{0\},k})^{0}/(\Delta_{\{0\},k})^{1}$ is isomorphic to the socle $\mathrm{soc}(\St^k(\FM_q^n))$ of $\St^k(\FM_q^n)$.
\end{corollary}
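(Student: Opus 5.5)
The plan is to deduce the corollary directly from Theorem~\ref{thm-gowfilt} by linear algebra over $k$, working one weight space at a time. The first point is that the Jantzen filtration of $\Delta_{\{0\},k}$ respects the weight decomposition, because the weight spaces are orthogonal for $\langle~,~\rangle_{\{0\}}$. So the $\FM_q^n$-weight space of $(\Delta_{\{0\},k})^i/(\Delta_{\{0\},k})^{i+1}$ equals
\[
D^i/D^{i+1}, \qquad D^i := (\Delta^{\FM_q^n}_{\{0\},k})^i .
\]
These subspaces are $G$-stable since the form is $G$-invariant.

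Next I identify $D^i$ as an annihilator. Under the identification $\Delta^{\FM_q^n}_{\{0\},k}\cong \St^k(\FM_q^n)^*$, Theorem~\ref{thm-gowfilt} says $D^{\kappa+1-j}=I(j)^\perp$. Putting $j=\kappa+1-i$ gives $D^i = I(\kappa+1-i)^\perp$ for all $i$.

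Then I dualize. For a finite-dimensional $kG$-module $X$ and submodules $B\subseteq A\subseteq X$, the pairing induces an isomorphism of $kG$-modules
\[
B^\perp/A^\perp\cong (A/B)^*.
\]
Apply this with $A=I(\kappa-i)$ and $B=I(\kappa+1-i)$. This gives
\[
D^i/D^{i+1} = I(\kappa+1-i)^\perp/I(\kappa-i)^\perp \cong \big(I(\kappa-i)/I(\kappa+1-i)\big)^* = M(\kappa-i)^*.
\]
By Gow's result \cite[4.3]{Gow}, $M(\kappa-i)$ is self-dual whenever it is nonzero, and the case $M(\kappa-i)=0$ is trivial. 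Hence $D^i/D^{i+1}\cong M(\kappa-i)$.

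There is a bookkeeping point when $\kappa-i<0$: there $I(\kappa-i)$ should be read as $I(0)$. In that case the quotient is zero, because Theorem~\ref{thm-gowfilt} with $j\le 0$ gives $D^i=I(0)^\perp=0$ for $i\ge\kappa+1$. I would state this convention explicitly.

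For the second statement, take $i=0$. This gives $L^{\FM_q^n}_{\{0\},k}\cong M(\kappa)$, since $L_{\{0\},k}=\Delta_{\{0\},k}/(\Delta_{\{0\},k})^1$ and its weight space is $D^0/D^1$. By \cite[4.5]{Gow}, $M(\kappa)=\mathrm{soc}(\St^k(\FM_q^n))$.

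The only delicate step is checking that the identification with $\St^k(\FM_q^n)^*$ is $G$-equivariant, so that the $\perp$ isomorphisms are isomorphisms of $kG$-modules and not just of vector spaces. This follows from the construction via the orthonormal flag form, which is $G$-invariant.
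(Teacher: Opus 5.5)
Your proposal is correct and is essentially the paper's proof. Theorem~\ref{thm-gowfilt} gives $(\Delta^{\FM_q^n}_{\{0\},k})^i = I(\kappa+1-i)^\perp$, the annihilator identity $I(\kappa+1-i)^\perp/I(\kappa-i)^\perp \cong M(\kappa-i)^*$ together with Gow's self-duality \cite[4.3]{Gow} gives the first claim, and $i=0$ with \cite[4.5]{Gow} gives the socle statement. Your extra remarks (the weight spaces are orthogonal so the filtration restricts, the filtration is $G$-stable, and the quotients vanish for $i>\kappa$) are details the paper leaves implicit, and you handle them correctly.
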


\begin{proof}  Applying the previous theorem,
\[(\Delta^{\FM_q^n}_{\{0\},k})^{i}/(\Delta^{\FM_q^n}_{\{0\},k})^{i+1} \cong I(\kappa+1-i)^\perp/I(\kappa-i)^\perp  
\cong (I(\kappa-i)/I(\kappa-i+1))^* = M(\kappa-i)^*.\]
By~\cite[Theorem 4.3]{Gow}, $M(\kappa-i)$ is self-dual.

In particular $L_{\{0\},k}^{\FM_q^n} \cong (\Delta^{\FM_q^n}_{\{0\},k})^{0}/(\Delta^{\FM_q^n}_{\{0\},k})^{1} \cong M(\kappa)$, which is the socle of $\St^k(\FM_q^n)$ by~\cite[Theorem 4.5]{Gow}.
\end{proof}

More generally, for any subspaces $V \subseteq W \subseteq \FM_q^n$,  note that the weight space $\Delta_V^W$ is naturally isomorphic to $\Delta_{\{0\}}^{W/V}$ and thus is naturally a $GL(W/V)$-representation isomorphic to the dual Steinberg module $\St^k(W/V)^*$.  By equation~(\ref{eq-form}), under this isomorphism the restriction of the form $\langle~,~\rangle_V$ on $\Delta_V$ to $\Delta_V^W$ is equal to $q^{\dim V \cdot \dim (W/V)}$ times the form $\langle~,~\rangle_{\{0\}}$ on $\Delta^{W/V}_{\{0\}}$.  Combining this observation with the theorem above, we can obtain a complete description of the simple $S(PG(n-1,q))$-modules.

\begin{corollary}\label{cor-weight} 
Recall that $\ell$ denotes the characteristic of $k$.  
\begin{enumerate}
\item If $\ell=p$ and $q=p^s$, then $\Delta_{\{0\},k}=L_{\{0\},k}$ and for any $V \neq \{0\}$ the filtration on $\Delta_{V,k}$ is given by 
\[ (\Delta_{V,k})^{i} = \bigoplus_{\substack{V \subseteq W \\ s(\dim V)(\dim W/V) \geq i}} (\Delta^W_{V,k}). \]
In particular, $L_{V,k} \cong \Delta^V_{V,k}$ is one-dimensional for each $V \neq \{0\}$.

\item If $\ell$ does not divide $q$, then for $V \subseteq W$, the $W$-weight space of the $i$th successive quotient $(\Delta_{V,k})^{i}/(\Delta_{V,k})^{i+1}$ of the Jantzen filtration of $\Delta_{V}$ is isomorphic as a $GL(W/V)$-module to the successive quotient $M(\kappa-i)$ of Gow's Jantzen filtration of $\St^k(W/V)$, where $\kappa$ is the $\ell$-valuation of the number of full flags in $W/V$.

In particular, the $W$-weight space of the simple module $L_{V,k} = (\Delta_{V,k})^{0}/(\Delta_{V,k})^{1}$ is isomorphic to the socle $\mathrm{soc}(\St^k(W/V))$ of $\St^k(W/V)$.
\end{enumerate}
\end{corollary}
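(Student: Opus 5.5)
The plan is to reduce everything to the weight-space decomposition of the Jantzen filtration, and then to the case $V=\{0\}$ already handled by Theorem~\ref{thm-gowfilt} and its first corollary.

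First, the filtration respects weights. The form $\langle~,~\rangle_V$ makes the decomposition $\Delta_V=\bigoplus_{W\supseteq V}\Delta_V^W$ orthogonal. Hence $x=\sum_W x_W$ lies in $\Delta_V^i$ if and only if each $x_W$ does, and so $(\Delta_{V,k})^i=\bigoplus_W (\Delta^W_{V,k})^i$. Here $(\Delta^W_{V,k})^i$ denotes the Jantzen filtration of the restricted form on the lattice $\Delta_V^W$. It therefore suffices to treat one weight space at a time.

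On $\Delta_V^W$ we use the isomorphism $\Delta_V^W\cong\Delta_{\{0\}}^{W/V}$, which is $GL(W/V)$-equivariant. By equation~(\ref{eq-form}), it carries $\langle~,~\rangle_V$ to $q^{\dim V\cdot\dim(W/V)}\langle~,~\rangle_{\{0\}}$. This is read off from the exponent $m\dim V+\Bell(w_0\sigma)$ with $m=\dim W/V$, since the remaining factor $\sgn(\sigma)q^{\Bell(w_0\sigma)}$ is exactly the formula for $\langle~,~\rangle_{\{0\}}$ on flags in $W/V$. Multiplying a form by a unit of $\Z_{(\ell)}$ does not change the $\ell$-adic Jantzen filtration, while multiplying by $\ell^a$ shifts it by $a$.

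For case (2), $\ell\nmid q$, so the scalar is an $\ell$-adic unit. The filtration on $\Delta^W_{V,k}$ then coincides with that on $\Delta^{W/V}_{\{0\},k}$. We apply the first corollary of Theorem~\ref{thm-gowfilt} with $\FM_q^n$ replaced by $W/V$, which is legitimate because that theorem holds for any $n$. This identifies the $i$th subquotient with $M(\kappa-i)$, where $\kappa=\nu_\ell$ of the number of full flags in $W/V$. The $i=0$ case gives the socle via Gow~\cite[4.5]{Gow}.

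For case (1), $\ell=p$ and $q=p^s$, so the scalar has valuation $s\dim V\dim(W/V)$. The unscaled form $\langle~,~\rangle_{\{0\}}$ on $\Delta^{W/V}_{\{0\}}$ is unimodular over $\Z_{(p)}$, and this is the main point to check. By Theorem~\ref{thm-gowfilt} its Gram matrix times Gow's Gram matrix equals $|GL(W/V)/B|=\prod_i\frac{q^i-1}{q-1}$, which is prime to $p$. Alternatively, the non-semisimplicity criterion via Theorem~\ref{thm-det} shows $\Delta_{\{0\},k}$ is simple because $p\nmid\frac{q^m-1}{q-1}$. Either way, the form on $\Delta_V^W$ is $p^{s\dim V\dim W/V}$ times a unimodular form. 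So $(\Delta^W_{V,k})^i$ is all of $\Delta^W_{V,k}$ when $s\dim V\dim(W/V)\ge i$ and is zero otherwise, which gives the displayed formula.

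Taking $V=\{0\}$ gives exponent $0$ for every $W$, so $\Delta_{\{0\},k}=L_{\{0\},k}$. For $V\neq\{0\}$, the degree-zero quotient survives only where $\dim W/V=0$, that is $W=V$, so $L_{V,k}\cong\Delta^V_{V,k}$, which is one-dimensional because its only flag is $(V)$.
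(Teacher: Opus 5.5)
Your proposal is correct and follows the paper's own argument, which is only the paragraph preceding the corollary: identify $\Delta_V^W\cong\Delta_{\{0\}}^{W/V}$ equivariantly, use equation~(\ref{eq-form}) to see that the form is rescaled by $q^{\dim V\cdot\dim(W/V)}$, and combine this with Theorem~\ref{thm-gowfilt} and its first corollary. You also fill in a step the paper leaves implicit: in case (1), the unscaled form is unimodular over $\Z_{(p)}$, which you get either from $\prod_i\frac{q^i-1}{q-1}$ being prime to $p$ or from Theorem~\ref{thm-det}.
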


\begin{remark}
We direct to the reader to~\cite[Example 4.9, Remarks 4.10 and 4.11]{Geck} for more information on the structure of $\St^k(\FM_q^n)$.  In particular, Geck computes the composition length and gives a conjectural description of the partitions corresponding to composition factors.
\end{remark}

It would be interesting to compute the composition series multiplicities $[\Delta_{V,k}: L_{W,k}]$.  By the highest weight structure, this multiplicity vanishes unless $V \subseteq W$.  Note also that by symmetry, for any element $g \in GL_n(\FM_q)$, there is an equality
\[ [\Delta_{V,k}: L_{W,k}] = [\Delta_{gV,k}: L_{gW,k}].\]
As $GL_n(\FM_q)$ acts transitively on the set of nested pairs of subspaces of fixed dimensions, for any $V\subseteq W$, the multiplicity $[\Delta_{V,k}: L_{W,k}]$ only depends on the dimensions of $V$ and $W$.  By the results above, computing this number is basically equivalent to computing the dimension of the socle of the Steinberg module.  We conclude this section by working out these multiplicities in the case $\ell ~|~ q+1$, where we can apply Khammash's result that the socle of the Steinberg module is trivial (see Remark~\ref{rem-kham}).

\begin{proposition}
Suppose that $\ell~|~q+1$ and $V\subseteq W$ are subspaces of $\FM_q^n$.  Let $i$ be the codimension of $V$ in $W$.  Then the composition series multiplicity
\[ [ \Delta_{V,k}:L_{W,k}] = \begin{cases}
(q^{2i-1} - 1)(q^{2i-1} - q^2) \ldots (q^{2i-1} - q^{2i-2}) & \mathrm{if}~i~\mathrm{is~even} \\
0 &  \mathrm{if}~i~\mathrm{is~odd}.
\end{cases}\]
\end{proposition}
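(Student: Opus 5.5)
The plan is to compute the multiplicities by comparing dimensions of weight spaces. Part (2) of Corollary~\ref{cor-weight} identifies each weight space of each simple module, and for $\ell \mid q+1$ these all turn out to have dimension at most one. This gives a unitriangular system for the unknown multiplicities, which we then solve by $q$-binomial inversion.

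\textbf{Step 1: dimensions of weight spaces.} Fix $V \subseteq U$ with $c = \codim(V,U)$. Since $\Delta^U_{V,k} \cong \St^k(U/V)^*$, its dimension is $q^{\binom{c}{2}}$; this is also the graded rank formula of Section~\ref{sec-projgeom}. For any $W$ with $V\subseteq W \subseteq U$, Corollary~\ref{cor-weight}(2) gives $L^U_{W,k} \cong \mathrm{soc}(\St^k(U/W))$. By Khammash's theorem (Remark~\ref{rem-kham}), when $\ell \mid q+1$ this socle is the trivial module, so $\dim L^U_{W,k}=1$ whenever $W \subseteq U$. It is zero otherwise, since $L_{W,k}$ is a quotient of $\Delta_{W,k}$, whose weights all contain $W$.

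\textbf{Step 2: the recursion.} Taking $U$-weight spaces of a composition series of $\Delta_{V,k}$, and using that $[\Delta_{V,k}:L_{W,k}]=0$ unless $V\subseteq W$, we get
\[ q^{\binom{c}{2}} = \sum_{V \subseteq W \subseteq U} [\Delta_{V,k}:L_{W,k}]. \]
By the $GL_n(\FM_q)$-symmetry noted above, $[\Delta_{V,k}:L_{W,k}] = m(j)$ depends only on $j=\codim(V,W)$. Grouping the $W$ by codimension, this reads
\[ \sum_{j=0}^{c} \binom{c}{j}_q\, m(j) = q^{\binom{c}{2}} \quad \text{for all } c \ge 0. \]
We will need all $c \le n-\dim V$, which is available because we may choose $U$ freely between $V$ and $\FM_q^n$.

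\textbf{Step 3: inversion and evaluation.} The matrix $\bigl(\binom{c}{j}_q\bigr)$ has inverse $\bigl((-1)^{c-j} q^{\binom{c-j}{2}}\binom{c}{j}_q\bigr)$, so
\[ m(c)=\sum_{j}(-1)^{c-j}q^{\binom{c-j}{2}+\binom{j}{2}}\binom{c}{j}_q. \]
Next use $\binom{c-j}{2}+\binom{j}{2}=\binom{c}{2}-j(c-j)$ together with $q^{-j(c-j)}\binom{c}{j}_q=\binom{c}{j}_{q^{-1}}$. This rewrites the sum as
\[ m(c) = (-1)^c q^{\binom{c}{2}}\sum_j (-1)^j \binom{c}{j}_{q^{-1}}. \]
Gauss's identity for alternating sums of $q$-binomials, applied with base $q^{-1}$, shows this vanishes for $c$ odd. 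For $c$ even it equals $q^{\binom c2}\prod_{k=1}^{c/2}(1-q^{-(2k-1)})$, which we simplify to a product of factors $(q^{2k-1}-1)$ times a power of $q$. As a sanity check, $c=1$ gives $0$ and $c=2$ gives $q-1$.

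\textbf{Main obstacle.} The representation-theoretic input is already supplied by Corollary~\ref{cor-weight} and Khammash's theorem, so the main work is the final combinatorial step. This means choosing the correct form of Gauss's identity and carefully matching the resulting product and power of $q$ with the closed form in the statement. The small case $c=2$ should be used to pin down normalizations, in particular to check the indexing of the factors in the displayed product. A secondary point to verify is the vanishing of $L^U_{W,k}$ for $W\not\subseteq U$, which justifies restricting the sum to $W \subseteq U$.
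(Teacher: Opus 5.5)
Your proof is correct. Steps~1--2 are the paper's own reduction: Corollary~\ref{cor-weight}(2) applies because $\ell\mid q+1$ forces $\ell\nmid q$. Combined with Khammash's theorem, it gives $\dim L^U_{W,k}=1$ exactly when $W\subseteq U$, which yields the unitriangular system $\sum_{j}\binom{c}{j}_q\, m(j)=q^{\binom{c}{2}}$.

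The two routes diverge in how this system is solved. The paper writes down the answer and checks that it satisfies the system (the Lemma, equation~(\ref{eq-binom})), by a two-step induction on $c$ using the $q$-Pascal rule, and then invokes uniqueness. You derive the answer instead. Your inversion coefficients $(-1)^{c-j}q^{\binom{c-j}{2}}$ are the M\"obius function of the subspace lattice, so your step is M\"obius inversion on the interval $[V,U]$; this even makes the $GL_n(\FM_q)$-symmetry unnecessary. Gauss's alternating-sum identity then gives the vanishing in odd codimension and the product in even codimension at once. Your version explains the parity phenomenon and needs no guess, at the cost of quoting two classical $q$-series identities. The paper's version is elementary and self-contained.

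On the matching you left open: for $c=2i$ your value is
\[ q^{\binom{2i}{2}}\prod_{k=1}^{i}\bigl(1-q^{-(2k-1)}\bigr)=q^{i(i-1)}\prod_{k=1}^{i}\bigl(q^{2k-1}-1\bigr)=\prod_{m=0}^{i-1}\bigl(q^{2i-1}-q^{2m}\bigr). \]
This is the displayed product with $i$ equal to \emph{half} the codimension. That is also how it appears in the paper's Lemma, where it is paired with $\binom{j}{2i}_q$. Read literally with $i=\codim(V,W)$, the statement would give $(q^3-1)(q^3-q^2)$ in codimension $2$, contradicting your (correct) check $m(2)=q-1$.

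So do not try to force a literal match: the statement has an indexing slip. What you have proved is $[\Delta_{V,k}:L_{W,k}]=\prod_{m=0}^{c/2-1}(q^{c-1}-q^{2m})$ for $c=\codim(V,W)$ even, and $0$ for $c$ odd. Incidentally, this number equals $|\mathrm{GL}_{c}(\FM_q)|/|\mathrm{Sp}_{c}(\FM_q)|$, the number of nondegenerate alternating forms on $\FM_q^{c}$. So (\ref{eq-binom}) just sorts the $q^{\binom{j}{2}}$ alternating forms on $\FM_q^j$ by their radicals.
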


\begin{proof}
In the Grothendieck group of $S_k(PG(n-1,q))$-modules, we have the equality
\[ [\Delta_{V,k}] = \sum_{V \subseteq W} [ \Delta_{V,k}:L_{W,k}]\cdot [L_{W,k}].\]
For any subspace $U \subseteq \FM_q^n$ such that $V \subseteq U$ of codimension $j$, considering the dimension of the $U$-weight space on each side of the equation gives:
\[ q^{\binom{j}{2}} = \sum_{W \supseteq V} [\Delta_{V,k}: L_{W,k}] \dim(L_{W,k}^U). \]
By Corollary~\ref{cor-weight} and Remark~\ref{rem-kham}, under our assumption $\ell ~|~ q+1$, $\dim(L_{W,k}^U)$ is equal to $1$ if $W \subseteq U$ and $0$ otherwise, so the formula above simplifies to:
\[ q^{\binom{j}{2}} = \sum_{U\supseteq W \supseteq V} [\Delta_{V,k}: L_{W,k}], \]
where $V\subseteq U$ is codimension $j$ and the sum is over all subspaces $W$ that contain $V$ and are contained in $U$.  Recall that by symmetry the multiplicity $[\Delta_{V,k}: L_{W,k}]$ only depends on the dimensions of $V$ and $W$, so the formula can be rewritten:
\[ q^{\binom{j}{2}} = \sum_{i=0}^{j} \binom{j}{i}_q [\Delta_{V,k}: L_{W^i,k}],\]
where for each $i=0,\ldots,j$, $W^i$ is a fixed choice of subspace of dimension $\dim V +i$ such that $V \subseteq W^i \subseteq \FM_q^n$.
We see from this system of equations that the numbers $[\Delta_{V,k}, L_{W,k}]$ can be computed inductively and only depend on the codimension of $V \subseteq W$.  It remains to check that the expression in the statement of the proposition is a solution to the system of equations.

\begin{lemma}
For any natural number $j$, 
\begin{equation}\label{eq-binom} q^{\binom{j}{2}} = \sum_{i=0}^{\lfloor j/2\rfloor} \binom{j}{2i}_q \cdot \prod_{m = 0}^{i-1} (q^{2i-1} - q^{2m}).\end{equation}
\end{lemma}

Note that when $j=0$ or $1$, the left hand side is $q^{\binom{j}{2}}=q^0=1$ and the right hand side reduces a single term that is also equal to $1$ (as the product is empty).  We proceed by induction on $j$, assuming that $j \geq 2$ and that the lemma is true in the $(j-1)$ and $(j-2)$-cases.

Using the equality $\binom{j}{2i}_q = \binom{j-1}{2i}_q + \binom{j-1}{2i-1}_q q^{j-2i}$ we have
\[ \sum_{i=0}^{\lfloor j/2 \rfloor} \binom{j}{2i}_q \cdot \prod_{m = 0}^{i-1} (q^{2i-1} - q^{2m}) = \sum_{i=0}^{\lfloor j/2 \rfloor} \left(\binom{j-1}{2i}_q + \binom{j-1}{2i-1}_q q^{j-2i}\right) \cdot \prod_{m = 0}^{i-1} (q^{2i-1} - q^{2m})\]
\[ = \sum_{i=0}^{\lfloor j/2 \rfloor} \binom{j-1}{2i}_q  \cdot \prod_{m = 0}^{i-1} (q^{2i-1} - q^{2m})  + \sum_{i=1}^{\lfloor j/2 \rfloor} \binom{j-1}{2i-1}_q q^{j-2i} \cdot \prod_{m = 0}^{i-1} (q^{2i-1} - q^{2m}).\]
We have begun the second sum at $i=1$ because the $q$-binomial coefficient $\binom{j-1}{-1}_q$ vanishes. 

Note that if $j$ is even and $i = j/2$, then $\binom{j-1}{2i}_q = \binom{j-1}{j}_q = 0$, while if $j$ is odd, then $\lfloor j/2 \rfloor = \lfloor(j-1)/2 \rfloor$.  In either case, the first sum can be replaced by $q^{\binom{j-1}{2}}$ by the induction hypothesis for $j-1$.  On the other hand, $\binom{j-1}{2i-1}_q = \binom{j-2}{2i-2}_q \cdot \frac{q^{j-1}-1}{q^{2i-1}-1}$ and \[\prod_{m = 0}^{i-1} (q^{2i-1} - q^{2m}) = q^{2i-2}\cdot (q^{2i-1}-1) \cdot \prod_{m = 0}^{i-2} (q^{2i-1} - q^{2m}).\]
Making these three substitutions to the expression above we get:
\[ = q^{\binom{j-1}{2}} + \sum_{i=1}^{\lfloor j/2 \rfloor} \binom{j-2}{2i-2}_q \cdot \frac{q^{j-1}-1}{q^{2i-1}-1} q^{j-2i} q^{2i-2}  (q^{2i-1}-1) \cdot \prod_{m = 0}^{i-1} (q^{2i-1} - q^{2m}).\]
After cancellations this gives:
\[ = q^{\binom{j-1}{2}} + \sum_{i=1}^{\lfloor j/2 \rfloor} \binom{j-2}{2i-2}_q \cdot (q^{j-1}-1) q^{j-2} \cdot \prod_{m = 0}^{i-2} (q^{2i-3} - q^{2m}).\]
\[ = q^{\binom{j-1}{2}} + (q^{j-1}-1) q^{j-2} \cdot \left(\sum_{i=0}^{\lfloor (j-2)/2 \rfloor} \binom{j-2}{2i}_q  \cdot \prod_{m = 0}^{i-1} (q^{2i-1} - q^{2m})\right).\]
We conclude by noting that the sum is the $(j-2)$-case of our expression and so can be replaced by $q^{\binom{j-2}{2}}$:
\[ = q^{\binom{j-1}{2}} + (q^{j-1}-1) q^{j-2} q^{\binom{j-2}{2}} = q^{\binom{j-1}{2}} + (q^{j-1}-1) q^{{j-2}+\binom{j-2}{2}}\]
\[ = q^{\binom{j-1}{2}} + (q^{j-1}-1) q^{\binom{j-1}{2}} = q^{\binom{j-1}{2}} + q^{\binom{j}{2}} - q^{\binom{j-1}{2}} =q^{\binom{j}{2}}.\]
This completes our proof of the lemma and thus the proposition as well.
\end{proof}

It could be interesting to have a geometric interpretation of equation~\ref{eq-binom}.

\section{Complete graphic Schur algebras}\label{sec-Kn}

 Let $n$ be a natural number.  Let $\Pi_n$ denote the set of all partitions of the set $[n]=\{1,\ldots,n\}$.  Here by a set partition of $[n]$ we mean a collection $\sigma$ of pairwise disjoint nonempty subsets $B_1,\ldots, B_l$ of $[n]$ for which $\bigcup_i B_i = [n]$.  We say that each $B_i$ is a block of $\sigma$ and write $|\sigma|$ to denote the number of blocks of $\sigma$.  We can write a set partition $\sigma$ using the notation $B_1 / B_2 / \ldots / B_l$.  We partially order $\Pi_n$ by refinement, writing $\pi \leq \sigma$ if every block of $\pi$ is contained in a block of $\sigma$.  We say $\pi$ is covered by $\sigma$ and write $\pi \lessdot \sigma$ if $\sigma$ is obtained from $\pi$ by merging two blocks into one.
 
Let $K_n$ denote the complete graph on $[n]$.  We recall:

\begin{definition}
Let $E$ be the set of edges of $K_n$.  The \textit{graphic matroid $M(K_n)$ for the complete graph} is the matroid with underlying set $E$ and $\Lc = \Pi_n$, where we view a set partition $\sigma$ of $[n]$ as corresponding to the set of all edges of $K_n$ that do not connect distinct blocks of $\sigma$.  The rank $\rho(\sigma)$ of a set partition $\sigma = B_1 / B_2 / \ldots / B_l$ is equal to $n-l$.
\end{definition}

A flag path in $\Pi_n$ is then a sequence of set partitions $(\sigma_1, \ldots, \sigma_l)$ such that for each $i = 1, \ldots, l-1$, either $\sigma_i \lessdot \sigma_{i+1}$ or $\sigma_i \gtrdot \sigma_{i+1}$.  

\begin{remark}(On diagrammatics) We can express flag paths in set partitions diagrammatically in terms of colored strands as follows.  We denote a single set partition  $\sigma= B_1 / B_2 / \ldots / B_k$ as a collection of $k$ clusters of colored dots arranged on a horizontal line, where each cluster corresponds to a block $B_i$ and the colored dots label the elements (or nonempty sets of elements) of $B_i$.  For example, we might express the partition $\sigma = 12/3$ as: 

\[
\begin{tikzpicture}[baseline = -3]
	\draw[blue,-,ultra thick] (-1,-.1) to (-1,.1);
	\draw[red,-,ultra thick] (-1.05,-.1) to (-1.05,.1);
	\draw[green,-,ultra thick] (0,-.1) to (0,.1);
	\draw[densely dotted] (-1.2,0) -- (.3,0);
	\node at (-1,-.4) {$\{1,2\}$};
        \node at (0,-.4) {$\{3\}$};
\end{tikzpicture}, \qquad
\begin{tikzpicture}[baseline = -3]
	\draw[blue,-,ultra thick] (0,-.1) to (0,.1);
	\draw[red,-,ultra thick] (0.05,-.1) to (0.05,.1);
	\draw[green,-,ultra thick] (-1,-.1) to (-1,.1);
	\draw[densely dotted] (-1.2,0) -- (.3,0);
	\node at (-1,-.4) {$\{3\}$};
        \node at (0,-.4) {$\{1,2\}$};
\end{tikzpicture},
\qquad \mathrm{or} \qquad
\begin{tikzpicture}[baseline = -3]
	\draw[violet,-,ultra thick] (0,-.1) to (0,.1);
	\draw[green,-,ultra thick] (-1,-.1) to (-1,.1);
	\draw[densely dotted] (-1.2,0) -- (.3,0);
	\node at (-1,-.4) {$\{3\}$};
        \node at (0,-.4) {$\{1,2\}$};
\end{tikzpicture},
\]
where blue denotes 1, red 2, green 3 and purple $\{1,2\}$.
Note that the order and position of the clusters on the line does not matter -- only the content of the clusters.

We may then view a flag path as a sequence of such cluster diagrams, where at each step either two clusters merge into one or one cluster splits into two.  To express a flag path diagrammatically we draw the sequence of cluster diagrams on stacked horizontal lines, from bottom to top, and connect the dots by colored strands.  For example, we could represent the flag path $(12/3, 1/2/3, 1/23, 123)$ by the diagram:
\[\begin{tikzpicture}[baseline = -3]
	\draw[blue,-,ultra thick] (.55,-1) to (.55,1);
	\draw[blue,-,ultra thick] (.55,1) to (0.05,1.5);
	\draw[blue,-,ultra thick] (.05,2) to (0.05,1.5);
	\draw[green,-,ultra thick] (-0.48,-.5) to (0.5,.5);
	\draw[green,-,ultra thick] (-0.48,-.5) to (-.48,-1);
	\draw[green,-,ultra thick] (.5,1) to (0.5,.5);
	\draw[green,-,ultra thick] (0,1.5) to (0.5,1);
	\draw[green,-,ultra thick] (0,1.5) to (0,2);
	\draw[red,-,ultra thick] (-.52,-1) to (-.52,1);
	\draw[red,-,ultra thick] (-.52,1) to (-.05,1.5);
	\draw[red,-,ultra thick] (-.05,2) to (-.05,1.5);
	
	\draw[densely dotted] (-.8,-1) -- (.8,-1);
	\draw[densely dotted] (-.8,0) -- (.8,0);
	\draw[densely dotted] (-.8,1) -- (.8,1);
	\draw[densely dotted] (-.8,2) -- (.8,2);
\end{tikzpicture},
\]
where red represents 1, green represents 2 and blue represents 3.

In expressing the relations below diagrammatically, we omit drawing blocks that do not merge or split.

One could equally well draw the diagrams with just labels and not the colors, but the colors may make it easier to visualize.
\end{remark}

Applying our definition of $S(M)$ to $M=M(K_n)$, we obtain:

\begin{definition} The complete graphic matroidal Schur algebra $S(M(K_n))$ is the quotient of $\FlP(M(K_n))$ by the following relations:
\begin{itemize}
\item[(FS')] Let $\sigma \in \Pi_n$ be a partition that contains distinct blocks $B,B',B''$.  Let $\tau$ (resp. $\tau'$ and $\tau''$) be the partition obtained from $\sigma$ by merging the blocks $B,B'$ (resp, $B',B''$ and $B'',B$) into a single block.  Let $\pi$ be the partition obtained from $\sigma$ by merging $B,B'$ and $B''$ into a single block.  Then:
\[(\sigma \lessdot  \tau \lessdot \pi) + (\sigma\lessdot \tau'\lessdot \pi) + (\sigma\lessdot \tau''\lessdot \pi) = 0 = (\pi\gtrdot \tau\gtrdot \sigma) + (\pi\gtrdot \tau'\gtrdot \sigma) + (\pi\gtrdot \tau''\gtrdot \sigma).\]
Alternatively, we may draw these relations diagrammatically as:

\[
\begin{tikzpicture}[scale=0.50,baseline = -3]
	\draw[blue,-,ultra thick] (0.15,.5) to (0.15,1);
	\draw[green,-,ultra thick] (0.06,.5) to (0.06,1);
	\draw[red,-,ultra thick] (-.05,.5) to (-.05,1);
	\draw[blue,-,ultra thick] (1,-1) to (0.15,.5);
	\draw[green,-,ultra thick] (0,-1) to (-0.5,-.5);
	\draw[green,-,ultra thick] (-0.5,-0.5) to (0.06,.5);
	\draw[red,-,ultra thick] (-1,-1) to (-0.6,-.5);
	\draw[red,-,ultra thick] (-0.6,-.5) to (-.05,.5);
	\draw[densely dotted] (-1.2,-1) -- (1.2,-1);
	\draw[densely dotted] (-1.2,0) -- (1.2,0);
	\draw[densely dotted] (-1.2,1) -- (1.2,1);
		\node at (-1,-1.3) {$\scriptstyle B$};
		\node at (0,-1.3) {$\scriptstyle B'$};
		\node at (1,-1.3) {$\scriptstyle B''$};
\end{tikzpicture}
+
\begin{tikzpicture}[scale=0.50,baseline = -3]
	\draw[red,-,ultra thick] (0.15,.5) to (0.15,1);
	\draw[blue,-,ultra thick] (0.06,.5) to (0.06,1);
	\draw[green,-,ultra thick] (-.05,.5) to (-.05,1);
	\draw[red,-,ultra thick] (1,-1) to (0.15,.5);
	\draw[blue,-,ultra thick] (0,-1) to (-0.5,-.5);
	\draw[blue,-,ultra thick] (-0.5,-0.5) to (0.06,.5);
	\draw[green,-,ultra thick] (-1,-1) to (-0.6,-.5);
	\draw[green,-,ultra thick] (-0.6,-.5) to (-.05,.5);
	\draw[densely dotted] (-1.2,-1) -- (1.2,-1);
	\draw[densely dotted] (-1.2,0) -- (1.2,0);
	\draw[densely dotted] (-1.2,1) -- (1.2,1);
		\node at (-1,-1.3) {$\scriptstyle B'$};
		\node at (0,-1.3) {$\scriptstyle B''$};
		\node at (1,-1.3) {$\scriptstyle B$};
\end{tikzpicture}
+
\begin{tikzpicture}[scale=0.50,baseline = -3]
	\draw[green,-,ultra thick] (0.15,.5) to (0.15,1);
	\draw[red,-,ultra thick] (0.06,.5) to (0.06,1);
	\draw[blue,-,ultra thick] (-.05,.5) to (-.05,1);
	\draw[green,-,ultra thick] (1,-1) to (0.15,.5);
	\draw[red,-,ultra thick] (0,-1) to (-0.5,-.5);
	\draw[red,-,ultra thick] (-0.5,-0.5) to (0.06,.5);
	\draw[blue,-,ultra thick] (-1,-1) to (-0.6,-.5);
	\draw[blue,-,ultra thick] (-0.6,-.5) to (-.05,.5);
	\draw[densely dotted] (-1.2,-1) -- (1.2,-1);
	\draw[densely dotted] (-1.2,0) -- (1.2,0);
	\draw[densely dotted] (-1.2,1) -- (1.2,1);
		\node at (-1,-1.3) {$\scriptstyle B''$};
		\node at (0,-1.3) {$\scriptstyle B$};
		\node at (1,-1.3) {$\scriptstyle B'$};
\end{tikzpicture}
=0 =
\begin{tikzpicture}[scale=0.50,baseline = -3]
\begin{scope}[yscale=-1,xscale=1]
	\draw[blue,-,ultra thick] (0.15,.5) to (0.15,1);
	\draw[green,-,ultra thick] (0.06,.5) to (0.06,1);
	\draw[red,-,ultra thick] (-.05,.5) to (-.05,1);
	\draw[blue,-,ultra thick] (1,-1) to (0.15,.5);
	\draw[green,-,ultra thick] (0,-1) to (-0.5,-.5);
	\draw[green,-,ultra thick] (-0.5,-0.5) to (0.06,.5);
	\draw[red,-,ultra thick] (-1,-1) to (-0.6,-.5);
	\draw[red,-,ultra thick] (-0.6,-.5) to (-.05,.5);
	\draw[densely dotted] (-1.2,-1) -- (1.2,-1);
	\draw[densely dotted] (-1.2,0) -- (1.2,0);
	\draw[densely dotted] (-1.2,1) -- (1.2,1);
\end{scope}
		\node at (-1,1.3) {$\scriptstyle B$};
		\node at (0,1.3) {$\scriptstyle B'$};
		\node at (1,1.3) {$\scriptstyle B''$};
\end{tikzpicture}
+
\begin{tikzpicture}[scale=0.50,baseline = -3]
\begin{scope}[yscale=-1,xscale=1]
	\draw[red,-,ultra thick] (0.15,.5) to (0.15,1);
	\draw[blue,-,ultra thick] (0.06,.5) to (0.06,1);
	\draw[green,-,ultra thick] (-.05,.5) to (-.05,1);
	\draw[red,-,ultra thick] (1,-1) to (0.15,.5);
	\draw[blue,-,ultra thick] (0,-1) to (-0.5,-.5);
	\draw[blue,-,ultra thick] (-0.5,-0.5) to (0.06,.5);
	\draw[green,-,ultra thick] (-1,-1) to (-0.6,-.5);
	\draw[green,-,ultra thick] (-0.6,-.5) to (-.05,.5);
	\draw[densely dotted] (-1.2,-1) -- (1.2,-1);
	\draw[densely dotted] (-1.2,0) -- (1.2,0);
	\draw[densely dotted] (-1.2,1) -- (1.2,1);
\end{scope}
		\node at (-1,1.3) {$\scriptstyle B'$};
		\node at (0,1.3) {$\scriptstyle B''$};
		\node at (1,1.3) {$\scriptstyle B$};
\end{tikzpicture}
+
\begin{tikzpicture}[scale=0.50,baseline = -3]
\begin{scope}[yscale=-1,xscale=1]
	\draw[green,-,ultra thick] (0.15,.5) to (0.15,1);
	\draw[red,-,ultra thick] (0.06,.5) to (0.06,1);
	\draw[blue,-,ultra thick] (-.05,.5) to (-.05,1);
	\draw[green,-,ultra thick] (1,-1) to (0.15,.5);
	\draw[red,-,ultra thick] (0,-1) to (-0.5,-.5);
	\draw[red,-,ultra thick] (-0.5,-0.5) to (0.06,.5);
	\draw[blue,-,ultra thick] (-1,-1) to (-0.6,-.5);
	\draw[blue,-,ultra thick] (-0.6,-.5) to (-.05,.5);
	\draw[densely dotted] (-1.2,-1) -- (1.2,-1);
	\draw[densely dotted] (-1.2,0) -- (1.2,0);
	\draw[densely dotted] (-1.2,1) -- (1.2,1);
\end{scope}
		\node at (-1,1.3) {$\scriptstyle B''$};
		\node at (0,1.3) {$\scriptstyle B$};
		\node at (1,1.3) {$\scriptstyle B'$};
\end{tikzpicture}
.
\]

\item[(FS'')] Suppose $\sigma \in \Pi_n$ contains distinct blocks $B,B',C,C'$.  Let $\mu$ (resp. $\nu$) be the partition obtained from $\sigma$ by merging the blocks $B,B'$  (resp. $C,C'$).  Let $\pi$ be the join of $\mu$ and $\nu$ (so the partition obtained from $\sigma$ by merging the blocks $B,B'$ into one block and $C,C'$ into another).  Then:
\[ (\sigma \lessdot \mu \lessdot \pi) + (\sigma \lessdot \nu \lessdot \pi) = 0 = (\pi \gtrdot \mu \gtrdot\sigma) + (\pi \gtrdot \nu \gtrdot \sigma).\]
This can be expressed diagrammatically as:
\[
\begin{tikzpicture}[scale=0.75,baseline = -3]
	\draw[blue,-,ultra thick] (.95,1) to (.95,.5);
	\draw[blue,-,ultra thick] (.5,0) to (.95,.5);
	\draw[blue,-,ultra thick] (.5,0) to (.5,-1);
	\draw[yellow,-,ultra thick] (1,1) to (1,.5);
	\draw[yellow,-,ultra thick] (1.5,0) to (1,.5);
	\draw[yellow,-,ultra thick] (1.5,0) to (1.5,-1);
	\draw[green,-,ultra thick] (0,-1) to (-0.44,-0.5);
	\draw[green,-,ultra thick] (-0.44,-0.5) to (-0.44,1);
	\draw[red,-,ultra thick] (-1,-1) to (-.5,-.5);
	\draw[red,-,ultra thick] (-.5,-.5) to (-.5,1);
	\draw[densely dotted] (-1.2,-1) -- (1.7,-1);
	\draw[densely dotted] (-1.2,0) -- (1.7,0);
	\draw[densely dotted] (-1.2,1) -- (1.7,1);
		\node at (-1,-1.2) {$\scriptstyle B$};
		\node at (0,-1.2) {$\scriptstyle B'$};
		\node at (.5,-1.2) {$\scriptstyle C$};
		\node at (1.5,-1.2) {$\scriptstyle C'$};
\end{tikzpicture}
+
\begin{tikzpicture}[scale=0.75,baseline = -3]
\begin{scope}[yscale=1,xscale=-1]
	\draw[green,-,ultra thick] (.95,1) to (.95,.5);
	\draw[green,-,ultra thick] (.5,0) to (.95,.5);
	\draw[green,-,ultra thick] (.5,0) to (.5,-1);
	\draw[red,-,ultra thick] (1,1) to (1,.5);
	\draw[red,-,ultra thick] (1.5,0) to (1,.5);
	\draw[red,-,ultra thick] (1.5,0) to (1.5,-1);
	\draw[blue,-,ultra thick] (0,-1) to (-0.44,-0.5);
	\draw[blue,-,ultra thick] (-0.44,-0.5) to (-0.44,1);
	\draw[yellow,-,ultra thick] (-1,-1) to (-.5,-.5);
	\draw[yellow,-,ultra thick] (-.5,-.5) to (-.5,1);
	\draw[densely dotted] (-1.2,-1) -- (1.7,-1);
	\draw[densely dotted] (-1.2,0) -- (1.7,0);
	\draw[densely dotted] (-1.2,1) -- (1.7,1);
\end{scope}
		\node at (1,-1.2) {$\scriptstyle C'$};
		\node at (0,-1.2) {$\scriptstyle C$};
		\node at (-.5,-1.2) {$\scriptstyle B'$};
		\node at (-1.5,-1.2) {$\scriptstyle B$};
\end{tikzpicture} = 0 =
\begin{tikzpicture}[scale=0.75,baseline = -3]
\begin{scope}[yscale=-1,xscale=1]
	\draw[blue,-,ultra thick] (.95,1) to (.95,.5);
	\draw[blue,-,ultra thick] (.5,0) to (.95,.5);
	\draw[blue,-,ultra thick] (.5,0) to (.5,-1);
	\draw[yellow,-,ultra thick] (1,1) to (1,.5);
	\draw[yellow,-,ultra thick] (1.5,0) to (1,.5);
	\draw[yellow,-,ultra thick] (1.5,0) to (1.5,-1);
	\draw[green,-,ultra thick] (0,-1) to (-0.44,-0.5);
	\draw[green,-,ultra thick] (-0.44,-0.5) to (-0.44,1);
	\draw[red,-,ultra thick] (-1,-1) to (-.5,-.5);
	\draw[red,-,ultra thick] (-.5,-.5) to (-.5,1);
	\draw[densely dotted] (-1.2,-1) -- (1.7,-1);
	\draw[densely dotted] (-1.2,0) -- (1.7,0);
	\draw[densely dotted] (-1.2,1) -- (1.7,1);
\end{scope}
		\node at (-1,1.2) {$\scriptstyle B$};
		\node at (0,1.2) {$\scriptstyle B'$};
		\node at (.5,1.2) {$\scriptstyle C$};
		\node at (1.5,1.2) {$\scriptstyle C'$};
\end{tikzpicture}
+
\begin{tikzpicture}[scale=0.75,baseline = -3]
\begin{scope}[yscale=-1,xscale=-1]
	\draw[green,-,ultra thick] (.95,1) to (.95,.5);
	\draw[green,-,ultra thick] (.5,0) to (.95,.5);
	\draw[green,-,ultra thick] (.5,0) to (.5,-1);
	\draw[red,-,ultra thick] (1,1) to (1,.5);
	\draw[red,-,ultra thick] (1.5,0) to (1,.5);
	\draw[red,-,ultra thick] (1.5,0) to (1.5,-1);
	\draw[blue,-,ultra thick] (0,-1) to (-0.44,-0.5);
	\draw[blue,-,ultra thick] (-0.44,-0.5) to (-0.44,1);
	\draw[yellow,-,ultra thick] (-1,-1) to (-.5,-.5);
	\draw[yellow,-,ultra thick] (-.5,-.5) to (-.5,1);
	\draw[densely dotted] (-1.2,-1) -- (1.7,-1);
	\draw[densely dotted] (-1.2,0) -- (1.7,0);
	\draw[densely dotted] (-1.2,1) -- (1.7,1);
\end{scope}
		\node at (1,1.2) {$\scriptstyle C'$};
		\node at (0,1.2) {$\scriptstyle C$};
		\node at (-.5,1.2) {$\scriptstyle B'$};
		\node at (-1.5,1.2) {$\scriptstyle B$};
\end{tikzpicture}.
\]

\item[($\bigwedge$')] Suppose that $\sigma \lessdot \pi$ and that $\pi$ is obtained from $\sigma$ by merging two blocks $B,B'$ into one.
Then:
\[ (\sigma,\pi,\sigma) = |B| |B'| (\sigma). \]
Diagrammatically, we can express this as:
\[
\begin{tikzpicture}[scale=0.75,baseline = -3]
	\draw[blue,-,ultra thick] (0,-1) to (-0.44,-0.5);
	\draw[blue,-,ultra thick] (-0.44,0.5) to (0,1);
	\draw[blue,-,ultra thick] (-0.44,-0.5) to (-0.44,0.5);
	\draw[red,-,ultra thick] (-.5,-0.5) to (-.5,.5);
	\draw[red,-,ultra thick] (-1,-1) to (-.5,-.5);
	\draw[red,-,ultra thick] (-.5,0.5) to (-1,1);
	\draw[densely dotted] (-1.2,-1) -- (.2,-1);
	\draw[densely dotted] (-1.2,0) -- (.2,0);
	\draw[densely dotted] (-1.2,1) -- (.2,1);
	\node at (-1,-1.2) {$\scriptstyle B$};
        \node at (0,-1.2) {$\scriptstyle B'$};
	\node at (-1,1.2) {$\scriptstyle B$};
        \node at (0,1.2) {$\scriptstyle B'$};
\end{tikzpicture}
= |B||B'| \cdot \begin{tikzpicture}[scale=0.75,baseline = -3]
	\draw[blue,-,ultra thick] (0,-.1) to (0,.1);
	\draw[red,-,ultra thick] (-1,-.1) to (-1,.1);
	\draw[densely dotted] (-1.2,0) -- (.3,0);
	\node at (-1,-.4) {$\scriptstyle B$};
        \node at (0,-.4) {$\scriptstyle B'$};
\end{tikzpicture}
\]

\item[($\bigwedge$'')] Suppose that $\sigma \lessdot \pi$ and $\sigma' \lessdot \pi$ and that $\pi$ is obtained from $\sigma$ (resp. $\sigma'$) by merging two blocks $B,B'$ (resp. $C,C'$) into one and that the meet $\tau$ of $\sigma$ and $\sigma'$ satisfies $|\tau| = |\sigma|+1$.
Then
\[ (\sigma,\pi,\sigma') =  - (\sigma,\tau,\sigma').\]
Note that this condition holds if either (after possibly relabelling the blocks) $B \cup B' = C \cup C'$ and $B' \subsetneq C'$, or $B\cup B'$ and $C \cup C'$ are disjoint.

Diagrammatically:
\[
\begin{tikzpicture}[scale=0.75,baseline = -3]
	\draw[blue,-,ultra thick] (.6,-1) to (0.1,-.5);
	\draw[blue,-,ultra thick] (.6,1) to (0.1,.5);
	\draw[green,-,ultra thick] (-0.44,-1) to (0.03,-.5);
	\draw[green,-,ultra thick] (.03,.5) to (0.5,1);
	\draw[red,-,ultra thick] (-.55,-1) to (-.05,-.5);
	\draw[red,-,ultra thick] (-.55,1) to (-.05,.5);
	\draw[green,-,ultra thick] (.03,-.5) to (0.03,.5);
	\draw[red,-,ultra thick] (-.05,-.5) to (-.05,.5);
	\draw[blue,-,ultra thick] (.1,-.5) to (0.1,.5);
	\draw[densely dotted] (-.8,-1) -- (.8,-1);
	\draw[densely dotted] (-.8,0) -- (.8,0);
	\draw[densely dotted] (-.8,1) -- (.8,1);
		\node at (-.725,-1.2) {$\scriptstyle B' = A \sqcup C$};
		\node at (.6,-1.2) {$\scriptstyle B$};
		\node at (-.525,1.2) {$\scriptstyle C$};
		\node at (.8,1.2) {$\scriptstyle C' = A \sqcup B$};
\end{tikzpicture}
= -~
\begin{tikzpicture}[scale=0.75,baseline = -3]
	\draw[blue,-,ultra thick] (.55,-1) to (.55,1);
	\draw[green,-,ultra thick] (-0.44,-.5) to (0.5,.5);
	\draw[green,-,ultra thick] (-0.44,-.5) to (-0.44,-1);
	\draw[green,-,ultra thick] (0.5,1) to (0.5,.5);
	\draw[red,-,ultra thick] (-.52,-1) to (-.52,1);
	\draw[densely dotted] (-.8,-1) -- (.8,-1);
	\draw[densely dotted] (-.8,0) -- (.8,0);
	\draw[densely dotted] (-.8,1) -- (.8,1);
		\node at (-.525,-1.2) {$\scriptstyle B'$};
		\node at (.6,-1.2) {$\scriptstyle B$};
		\node at (-.525,1.2) {$\scriptstyle C$};
		\node at (.6,1.2) {$\scriptstyle C'$};
\end{tikzpicture}\qquad ,\qquad 
\begin{tikzpicture}[scale=0.75,baseline = -3]
	\draw[blue,-,ultra thick] (.5,0.5) to (1,1);
	\draw[blue,-,ultra thick] (.5,0.5) to (.5,-1);
	\draw[yellow,-,ultra thick] (.45,.5) to (0,1);
	\draw[yellow,-,ultra thick] (.45,-1) to (.45,.5);
	\draw[green,-,ultra thick] (0,-1) to (-0.44,-.5);
	\draw[green,-,ultra thick] (-0.44,-.5) to (-0.44,1);
	\draw[red,-,ultra thick] (-1,-1) to (-.5,-.5);
	\draw[red,-,ultra thick] (-.5,-.5) to (-.5,1);
	\draw[densely dotted] (-1.2,-1) -- (1.2,-1);
	\draw[densely dotted] (-1.2,0) -- (1.2,0);
	\draw[densely dotted] (-1.2,1) -- (1.2,1);
		\node at (-1,-1.2) {$\scriptstyle B$};
		\node at (0,-1.2) {$\scriptstyle B'$};
		\node at (0,1.2) {$\scriptstyle C$};
		\node at (1,1.2) {$\scriptstyle C'$};
\end{tikzpicture} = -
\begin{tikzpicture}[scale=0.75,baseline = -3]
	\draw[blue,-,ultra thick] (1,0) to (1,1);
	\draw[blue,-,ultra thick] (1,0) to (.625,-.5);
	\draw[blue,-,ultra thick] (.635,-1) to (.635,-.5);
	\draw[yellow,-,ultra thick] (.25,0) to (0.25,1);
	\draw[yellow,-,ultra thick] (.62,-.5) to (.25,0);
	\draw[yellow,-,ultra thick] (.6,-.5) to 	(.6,-1);
	\draw[green,-,ultra thick] (-0.25,-1) to (-0.25,0);
	\draw[green,-,ultra thick] (-.25,0) to (-0.62,.5);
	\draw[red,-,ultra thick] (-1,-1) to (-1,0);
	\draw[red,-,ultra thick] (-1,0) to (-.635,.5);
	\draw[red,-,ultra thick] (-.64,1) to (-.64,.5);
	\draw[green,-,ultra thick] (-0.61,1) to (-0.61,.5);
	\draw[densely dotted] (-1.2,-1) -- (1.2,-1);
	\draw[densely dotted] (-1.2,0) -- (1.2,0);
	\draw[densely dotted] (-1.2,1) -- (1.2,1);
		\node at (-1,-1.2) {$\scriptstyle B$};
		\node at (-.25,-1.2) {$\scriptstyle B'$};
		\node at (.25,1.2) {$\scriptstyle C$};
		\node at (1,1.2) {$\scriptstyle C'$};
\end{tikzpicture}
\]

\item[($\bigwedge$''')] Finally if $\sigma, \sigma' \lessdot \pi$ and $\pi$ is obtained from $\sigma$ (resp. $\sigma'$) by merging two blocks $B,B'$ (resp. $C,C'$) into one, but $\sigma \neq \sigma'$ and $|\tau| \neq |\sigma|+1$ where $\tau$ is the meet of $\sigma$ and $\sigma'$, then:
\[ (\sigma,\pi,\sigma') = 0.
\]

Diagrammatically:
\[
\begin{tikzpicture}[baseline = -3]
	\draw[blue,-,ultra thick] (.55,-1) to (0.1,-.5);
	\draw[blue,-,ultra thick] (0.1,-.5) to (0.1,.5);
	\draw[blue,-,ultra thick] (.55,1) to (0.1,.5);
	\draw[green,-,ultra thick] (0.05,-.5) to (0.5,-1);
	\draw[green,-,ultra thick] (.05,-.5) to (0.05,.5);
	\draw[green,-,ultra thick] (0,.5) to (-0.45,1);
	\draw[red,-,ultra thick] (-.5,-1) to (-.05,-.5);
	\draw[red,-,ultra thick] (-.05,-.5) to (-.05,.5);
	\draw[red,-,ultra thick] (-.5,1) to (-.05,0.5);
	\draw[yellow,-,ultra thick] (-.45,-1) to (0,-.5);
	\draw[yellow,-,ultra thick] (0,-.5) to (0,.5);
	\draw[yellow,-,ultra thick] (0,.5) to (.5,1);
	\draw[densely dotted] (-.8,-1) -- (.8,-1);
	\draw[densely dotted] (-.8,0) -- (.8,0);
	\draw[densely dotted] (-.8,1) -- (.8,1);
		\node at (-.525,-1.2) {$\scriptstyle A \sqcup C$};
		\node at (.6,-1.2) {$\scriptstyle B \sqcup D$};
		\node at (-.525,1.2) {$\scriptstyle C \sqcup D$};
		\node at (.6,1.2) {$\scriptstyle A \sqcup B$};
\end{tikzpicture} = 0,
\]
for any nonempty disjoint sets $A,B,C,D$.
\end{itemize}
\end{definition}

For any $S(M(K_n))$-module $X$ and partition $\sigma$ of $[n]$, we can consider the \textit{weight space} $X^\sigma:= (\sigma)X \subseteq X$ and the corresponding weight decomposition $X = \bigoplus_{\sigma} X^\sigma$.

Our main results take the following form for the algebra $S(M(K_n))$:
 \begin{enumerate}
 \item For any field $k$, $S_k(M(K_n))=S(M(K_n))\otimes k$ is a quasihereditary algebra with highest weights indexed by the set partitions of $[n]$ (with the inverse refinement order so that the partition with a single block is minimal). 
 
 For each $\sigma \in \Pi_n$, let $\Fl^+_\sigma$ (resp. $\Fl^-_\sigma$) be the free $\Z$-module with basis the set of all flags $(\sigma_l \gtrdot \sigma_{l-1} \gtrdot \ldots \gtrdot \sigma_1 \gtrdot \sigma)$ (resp. $(\sigma \lessdot \sigma_{1} \lessdot \ldots \lessdot \sigma_{l-1} \gtrdot \sigma_l)$), where $l$ is allowed to vary.  Let $\Fs^{\pm}_\sigma$ be the quotient of $\Fl^{\pm}_\sigma$ by all flag space relations.  Let $(\Fs^+)_\sigma^\pi \subset \Fs^+_\sigma$ (resp. $(\Fs^-)_\sigma^\pi \subset \Fs^-_\sigma$) for the submodule generated by flags $(\pi \gtrdot \sigma_{l-1} \gtrdot \ldots \gtrdot \sigma_1 \gtrdot \sigma)$ (resp. in the reverse order).  In particular \[ \Fs^\pm_\sigma = \bigoplus_{\sigma \leq \pi} (\Fs^\pm)_\sigma^\pi.\]

In Section~\ref{sec-reedy} we prove that multiplication induces an isomorphism
\[ \bigoplus_{\sigma} \Fs^+_\sigma \otimes \Fs^-_\sigma \cong \bigoplus_{\pi' \geq \sigma \leq \pi} (\Fs^+)_\sigma^{\pi'} \otimes (\Fs^-)_\sigma^\pi \cong S(M(K_n)).\]

As for $S(PG(n-1,q))$, we construct a (graded) standard module $\Delta_\sigma$ for $S(PG(n-1,q))$ for each partition $\sigma \in \Pi_n$ such that 
$ \Delta_\sigma \cong \Fs_\sigma^+.$

\item The graded rank of $\Delta_\sigma$ is given by:
\[ \prod_{j=1}^{|\sigma|-1} (1+ j t).\]

The graded rank of $S(M)$ is therefore:
\[\sum_{i=1}^n S(n,i) \left(\prod_{j=1}^{i-1} (1+j t)(1+j t^{-1}) \right) ,\]
where $S(n,i)$ is the Stirling number of the second type, the number of set partitions of $[n]$ with $i$ blocks. 

\item As in the previous example there are bilinear forms \[\langle~,~\rangle_\sigma : \Delta_\sigma \times \Delta_\sigma \to \Z.\]  The standard module $\Delta_{\sigma,k}= \Delta_\sigma \otimes k$ has a simple quotient $L_{\sigma,k}:= \Delta_{\sigma,k}/\mathrm{rad}\langle~,~\rangle_{\sigma,k}$ and the set $\{L_{\sigma,k}\}_{\sigma \in \Pi_n}$ is a complete set of non-isomorphic simple $S_k(M(K_n))$-modules.  Additionally, one can again construct a natural Jantzen filtration on $\Delta_{\sigma,k}$.

\item Let $\ell$ be the characteristic of $k$.  For a set partition $\sigma= B_1 / B_2 / \ldots / B_l$, the standard module $\Delta_{\sigma,k}$ is not simple if and only if $0<\ell \leq |\sigma|=l$.   Consequently, $S_k(M(K_n))$ is not semisimple if and only if $0<\ell \leq n$.

\item Similarly to the action of $GL_n(\FM_q)$ on $\Delta_{\{0\}}^{\FM_q^n}$, the symmetric group $\mathfrak S_n$ acts on the weight space 
\[\Delta_{1/\ldots/n}^{[n]} \cong (\Fl^+)_{1/\ldots/n}^{[n]}, \]
which has rank $(n-1)!$.  The $\mathfrak S_n$-module $(\Fl^+)_{1/\ldots/n}^{[n]}$ is dual to the natural $\mathfrak S_n$-module structure on the Orlik-Solomon module or equivalently top homology of the partition lattice $\tilde{H}_{n-3}(\overline{\Pi}_n;k)$.  This later $\mathfrak S_n$-representation is known to be isomorphic to the representation $\mathrm{Lie}_k(n)$ of
$\mathfrak S_n$ on the multilinear component of the free Lie algebra over $k$ on $n$ generators tensored with the sign representation~\cite{WachsLie}.  By the same reasoning we saw in the $PG(n-1,q)$-case, the restriction of the Jantzen filtration to $(\Fl^+)_{1/\ldots/n}^{[n]}$ is compatible with the action of $\mathfrak S_n$ and nontrivial whenever $\ell\leq n$.

It would be interesting to see if the structure of standard $S_k(M(K_n))$-modules could be used to study the structure of the $\mathfrak{S}_n$-module $\mathrm{Lie}_k(n)$ or vice versa. 
\end{enumerate}

%%%%%%%%%%
\section{Matroid background}
\label{sec-matroids}
%%%%%%%%%%

Before describing our results for general matroids, we recall the necessary definitions from the theory of matroids.  Readers who are already familiar with matroids are encouraged to skip this section.

Let $M$ be a matroid with underlying finite set $E$ and lattice of flats $\cL$.  Let $\rho$ be the rank function on $\cL$.

For any subset $X \subseteq E$, the \textit{closure} $\overline X$ (or span) of $X$ is defined to be the minimal flat containing $X$.

If $X,Y \in \Lc$, then the \textit{join} $X \vee Y$ is $\overline{X \cup Y}$ and the \textit{meet} $X \wedge Y$ is the intersection $X \cap Y$.  The lattice of flats $\Lc$ is \textit{semimodular} meaning that for any $X,Y \in \Lc$,
\[ \rho(X) + \rho(Y) \geq \rho(X \vee Y) + \rho(X \wedge Y).\]
When equality holds we say that $X$ and $Y$ form a \textit{modular pair} of flats.  If $X, Y$ form a modular pair then, $$X \geq Z \quad \mathrm{implies} \quad X \cap (Y \vee Z) = (X \cap Y) \vee Z.$$

We extend $\rho$ to arbitrary subsets $X \subseteq E$ by letting the rank $\rho(X) = \rho(\overline{X})$.

A subset $I \subseteq E$ is said to be \textit{independent} if $\rho(I) = |I|$.  We denote by $\cI$ the collection of independent sets of $M$.

The maximal elements of $\cI$ (with respect to inclusion) are called \textit{bases} of $M$ and all bases have the same number of elements, namely $\rho(M):=\rho(E)$ the rank of $M$, which we will also denote by $r$. Let $\Bas(M)$ denote the set of all bases of $M$.

A set $X \subseteq E$ is \textit{dependent} if $X \not\in \cI$.  The minimal dependent sets of $M$ are called \textit{circuits}.

\begin{example*}
\begin{enumerate}
\item  For the projective geometry $PG(n-1,q)$, a subset $S\subseteq E$ is independent if the corresponding lines span a subspace of dimension $|S|$.  Note that the rank of $PG(n-1,q)$ is $n$.
\item For the graphic matroid $M(K_n)$ the independent sets are the subsets of edges of $K_n$ that contain no closed circuits. The bases of $M(K_n)$ correspond to spanning trees and the rank of $M(K_n)$ is $n-1$.
\end{enumerate}
\end{example*}

For any subset $X \subseteq E$, the collection $\cL^X = \{F \cap X ~|~ F \in \cL\}$ forms the set of flats for a matroid $M^X$ with underlying set $X$ called the \textit{restriction} of $M$ to $X$.  
The collection $\cL_X = \{F \setminus X~|~F \in \cL\}$ forms the set of flats for a matroid $M_X$ with underlying set $E\setminus X$ called the \textit{contraction} of $M$ by $X$.

An element $s \in E$ is called a \textit{loop} of $M$ if $s$ is contained in the minimal flat of $M$ (equivalently, the set $\{s\}$ is dependent).  An element $t \in E$ is called a \textit{coloop} if $E\setminus t$ is a flat of $M$ (equivalently, $s$ is contained in every basis of $M$). 

A flat $X \in \Lc$ (or more generally a set) is said to be \textit{cyclic} (or coloop-free) if no element of $X$ is a coloop of the restriction $M^X$.  Equivalently, a flat (or set) is cyclic if and only if it can be expressed as a union of circuits of $M$.  Let $\mathrm{Cyc}(M)$ denote the collection of cyclic sets and $\F= \F(M)$ denote the poset of all cyclic flats ordered by inclusion.  We note that $\F$ also forms a lattice, but where the meet of $X$ and $Y$ is the obtained from $X \cap Y$ by removing all coloops.

\begin{example*}(Continued)
\begin{enumerate}
\item In $PG(n-1,q)$ there are no loops and every flat of rank not equal to one is cyclic, but every rank one flat contains a single element, which is a coloop. 
\item In $M(K_n)$, the flat corresponding to a set partition $\sigma= B_1 / B_2 / \ldots / B_l$ is cyclic if and only if $\sigma$ has no blocks of 2 elements.
\end{enumerate}
\end{example*}

The \textit{dual matroid} $M^*$ of $M=(E,\cL)$ is defined by $M^* = (E,\cL^*)$, where $\cL^*= \{E \setminus C~|~C \in \mathrm{Cyc}(M)\}$.
The set $\Bas^*(M) = \{ E \setminus X ~|~ X \in \Bas(M)\}$ forms the set of bases for $M^*$.  Note that $M^*$ has rank $\rho(M^*) = |E|-r$.

If $X \subset E$, one can check that the contraction $M_X$ is equal to $((M^*)^{E\setminus X})^*.$

The \textit{direct sum} of matroids $M_1$ and $M_2$ on sets $E_1$ and $E_2$ is the matroid on $E_1 \coprod E_2$ whose flats are the unions of flats of $M_1$ and $M_2$.
A matroid is \textit{connected} if it cannot be written as a direct sum.

The \textit{M\"obius function} of the matroid is the function $\mu: \Lc \times \Lc \to \Z$ defined recursively by
\[ \mu(x,x) = 1 \textrm{~for~all~}x \in \Lc, \qquad \mu(x,y)=0 \textrm{~if~}x \not\leq y,\]
\[ \mu(x,y) = - \sum_{x \leq z <y} \mu(x,z)~\textrm{for}~x <y.\]
We abbreviate $\mu(x):= \mu(\varnothing, x)$ and write $\mu^+(x) = (-1)^{\rho(x)} \mu(x)$ for the unsigned M\"obius function (which is always positive).

The \textit{characteristic polynomial} of $M$ is defined in terms of the M\"obius function as \[p(M;t) = \sum_{F \in \Lc} \mu(F) t^{\rho(M)-\rho(F)}.\]

Finally, we mention Crapo's \textit{beta invariant}, which can be defined as $$\beta(M) = (-1)^{\rho(M)-1}\frac{d}{dt}p(M;1) = (-1)^{\rho(M)} \sum_{F \in \Lc} \mu(F) \rho(F).$$
It turns out that $\beta(M)$ is always non-negative and equal to zero if and only if $M$ is not connected or is a loop.  More detailed information on the characteristic polynomial, beta invariant and related topics can be found in the survey~\cite{zaslavsky-mu}.

%%%%%%%%%%%%%%%%%%%
\section{Reedy decomposition}
\label{sec-reedy}
%%%%%%%%%%%%%%%%%%%

To prove that the algebra is cellular and quasi-hereditary, it will be convenient to prove that it has a triangular decomposition of the form recently defined in~\cite{lin-reedy} under the name \textit{Reedy decomposition} and shown in~\cite{CDK} to be a special case of the triangular decompositions described for example in~\cite[Definition 5.31]{BS}.

\begin{definition}
Let $k$ be a field.  Let $A$ be a finite-dimensional $k$-algebra with a complete set $E$ of pairwise orthogonal idempotents.  $A$ is a \textit{Reedy algebra} if there is a degree function $\deg:E \to \mathbb N$ and two subalgebras $A^+$ and $A^-$ (with the same unit element as $A$) such that:
\begin{enumerate}
\item \label{reedy+} for all $e \in E$, there is an isomorphism $e A^+ e \cong k$ of $k$-vector spaces and for $e_i, e_j \in E$ such that $e_j \neq e_i$, if $e_j A^+ e_i \neq 0$, then $\deg(e_j) > \deg(e_i)$, 
\item \label{reedy-} for all $e \in E$, there is an isomorphism $e A^- e \cong k$ of $k$-vector spaces and for $e_i, e_j \in E$ such that $e_j \neq e_i$, if $e_j A^- e_i \neq 0$, then $\deg(e_j) < \deg(e_i)$. 
\item \label{reedy+-} for any $e_i,e_j \in E$, multiplication in $A$ induces an isomorphism of $k$-modules:
\[ \bigoplus_{e_l} e_j A^+ e_l \otimes e_l A^- e_i \stackrel{\cong}{\longrightarrow} e_j A e_i.\]
\end{enumerate}
\end{definition}

\begin{theorem}
For any field $k$, the algebra $S_k(M)$ is a Reedy algebra and so quasi-hereditary with a triangular decomposition.
\end{theorem}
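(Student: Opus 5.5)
We take as idempotents $E=\{(F)\}_{F\in\Lc}$ with $\deg(F)=\rho(E)-\rho(F)$, so that larger flats have smaller degree. Since the relations are homogeneous for the decomposition by endpoints, $1=\sum_F (F)$ is a decomposition into pairwise orthogonal idempotents. We let $A^+$ be the image in $S_k(M)$ of the span of descending flags $(K_l\gtrdot\cdots\gtrdot K_1)$, and $A^-$ the image of the span of ascending flags $(K_1\lessdot\cdots\lessdot K_l)$, both including the length-one paths $(F)$. These are subalgebras because concatenating two descending (resp. ascending) flags gives a descending (resp. ascending) flag. A descending path from $F$ to $G\neq F$ requires $G\subsetneq F$, so $\deg G>\deg F$. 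Together with $(F)A^\pm(F)$ being spanned by $(F)$, this gives axioms (1) and (2), provided the images of the flags do not collapse. In the paper's convention $(K_1,\dots,K_l)$ lies in $(K_1)S(K_l)$, so some care with left/right is needed, but the comparison of degrees is symmetric.

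Next we check that multiplication $\bigoplus_L (G)A^+(L)\otimes (L)A^-(F)\to (G)S_k(M)(F)$ is surjective. Every flag path is a word in up-steps and down-steps. The up-down relation ($\bigwedge$) rewrites any occurrence of a pattern $K\lessdot L\gtrdot K'$ as a scalar multiple of $(K)$ or of a path $K\gtrdot K\cap K'\lessdot K'$. This replaces a "peak" by a "valley" and strictly decreases the sum of ranks along the path, while keeping or shortening its length. Inducting on this sum, every path becomes a combination of paths of the form (descending)(ascending), which are valley-shaped. This proves surjectivity.

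The main obstacle is injectivity, together with the non-collapse of $A^\pm$. Both reduce to showing that
\[S(M)\cong\bigoplus_L \Fs^+_L\otimes\Fs^-_L\]
as free $\Z$-modules, where $\Fs^\pm_L$ denotes the flag space of flags starting at $L$ modulo (FS). Flag spaces are free by Brylawski--Varchenko, being dual to Orlik--Solomon algebras. Because everything is defined over $\Z$ and free, it then suffices to work over $\Z$ and base change.

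To prove injectivity we construct a faithful action, as in a PBW argument. Set $V=\bigoplus_{L}\Fs^+_L\otimes\Fs^-_L$, and define the action of a generator $(K\lessdot K')$ or $(K'\gtrdot K)$ on a basis element $x\otimes y$ as follows. Left multiplication by a down-step simply extends the descending flag. Left multiplication by an up-step $K\lessdot K'$ applied to a descending flag beginning at $K$ is defined by moving the up-step inward through the descending flag using ($\bigwedge$) recursively, with induction on the length of the flag. The result is a sum of terms of the form (descending)(ascending), and the ascending part is then multiplied into $\Fs^-$.

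We must then check three things:
\begin{enumerate}
\item the recursion respects the (FS) relations in $\Fs^+$;
\item the operators satisfy (FS) and ($\bigwedge$), so that $V$ is an $S(M)$-module;
\item the element $\sum_L (L)\otimes(L)$ generates $V$ and its annihilator is trivial on the span of the valley-shaped words.
\end{enumerate}
The hard part is item (1), together with the compatibility of ($\bigwedge$) with (FS) in item (2). These amount to rank-$2$ and rank-$3$ interval computations in the geometric lattice. In those computations one uses the modular-pair identity $X\cap(Y\vee Z)=(X\cap Y)\vee Z$ and counts $|L\setminus K|=\sum_{X}|X\setminus K|$ over the atoms $X$ of the interval $[K,L]$.

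If these hold, the multiplication map is an isomorphism and axiom (3) follows. Quasi-heredity with a triangular decomposition is then the cited result of \cite{lin-reedy,CDK}, that Reedy algebras are quasi-hereditary.
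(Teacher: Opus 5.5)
Your architecture differs from the paper's. The paper first shows, via the ($\bigwedge$)-rewriting, that valley-shaped paths form a $\Z$-basis of $\widetilde S(M)=\FlP(M)/(\bigwedge)$. It then checks that every element of the (FS)-ideal of $\widetilde S(M)$, i.e.\ every sum over completions of a gapped path, reduces to (FS) relations inside $\Fs^+$ or $\Fs^-$, using the reversal $\iota$ so that only one configuration has to be computed. You instead make $V=\bigoplus_L\Fs^+_L\otimes\Fs^-_L$ a left module and use $a\mapsto a\cdot\sum_L(L)\otimes(L)$ as a left inverse to multiplication. That framework is sound: ($\bigwedge$) holds by construction if the up-step recursion begins by applying ($\bigwedge$) at the top, descending (FS) is trivial, and $(xy)\cdot\sum_L(L)\otimes(L)=x\otimes y$ for $x$ descending and $y$ ascending. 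It gives injectivity without the paper's bookkeeping of gapped sums. The price is that $\iota$ is unavailable inside $V$, so the ascending relation $\sum_X(K\lessdot X\lessdot L)$ must be checked on $V$ by a separate, mirror-image computation.

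\textbf{Gap.} The proposal stops exactly where the content lies: item (1) and the (FS) part of item (2) are only asserted (``if these hold''). What is needed is the following. For $X\lessdot Z$ and $Z\gtrdot K\gtrdot Y$, pushing $(X\lessdot Z)$ through $\sum_K(Z\gtrdot K\gtrdot Y)$ gives:
\begin{enumerate}
\item zero if $X\gtrdot Y$, since $|Z\setminus X|=\sum_{K\ne X}|K\setminus Y|$;
\item zero if $(X,Y)$ is not modular, because a surviving term would need $(X\cap K,Y)$ modular, which forces $(X,Y)$ modular;
\item if $(X,Y)$ is modular and $X\not\gtrdot Y$, \emph{not} zero but
\[ \sum_{X\gtrdot J\gtrdot X\cap Y}(X\gtrdot J\gtrdot X\cap Y)\cdot(X\cap Y\lessdot Y). \]
\end{enumerate}
Case (iii) uses the bijection $K\mapsto X\cap K$ from $\{K: Z\gtrdot K\gtrdot Y,\ (X,K)\ \text{modular}\}$ onto $\{J: X\gtrdot J\gtrdot X\cap Y\}$, with inverse $J\mapsto J\vee Y$. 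This is where the modular-pair identity enters, and the two signs from ($\bigwedge$) cancel.

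Case (iii) is the idea your plan does not identify: the (FS) sum is transported to another (FS) sum one rank lower. So compatibility of your recursion with (FS), and likewise the mirrored check, closes only by induction on the length of the descending flag. Without this, $V$ is not known to be a module and axiom (3) is unproved.

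\textbf{Degree function.} The degree must be $\deg=\rho$, not $\rho(E)-\rho$, and your remark that the comparison of degrees is symmetric is wrong. Axiom (3) factors $e_jAe_i$ through idempotents $e_l$ with $\deg e_l\le\min(\deg e_i,\deg e_j)$, and your normal form factors through the bottom of a valley. With corank you would need factorization through peaks. That fails already for $(K\gtrdot L\lessdot K)$, since every peak-shaped path from $K$ to $K$ reduces to a multiple of $(K)$.
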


\begin{proof}
While the above definition is given for a field, we may check it the analogous statement over $\Z$.

The set $\Lc$ of flats of $M$ gives a complete set of pairwise orthogonal idempotents.  Let $\deg: \Lc \to \mathbb N$ be the rank function. Let $\Fs^+$ denote the subalgebra of $S(M)$ generated by increasing flag paths $(K_k \gtrdot \ldots \gtrdot K_1 \gtrdot K_0)$ and let $\Fs^-$ denote the subalgebra generated by decreasing flag paths $(K_0 \lessdot \ldots \lessdot K_{k-1} \lessdot K_k)$).  Then properties (\ref{reedy+}) and (\ref{reedy-}) are immediate from the definition of $S(M)$.

It remains to show the property (\ref{reedy+-}), that multiplication induces an isomorphism of $\Z$-modules:
\[ \bigoplus_{L \subseteq K,J} (K) \Fs^+ (L) \otimes (L) \Fs^- (J) \stackrel{\cong}{\longrightarrow} (K) S(M) (J).\]

Consider first the quotient $\widetilde{S}(M)$ of $\FlP(M)$ by just the relation ($\Lambda$).  Let $\Fl^+$ (resp. $\Fl^-$) denote the subalgebra of $\widetilde{S}(M)$ generated by increasing (resp. decreasing) flags $F = (K_k \gtrdot \ldots \gtrdot K_1)$ (resp. $(K_k \lessdot  \ldots \lessdot K_1)$).  For each flag path $(K_k, \ldots, K_1)$, by repeatedly applying the relation ($\Lambda$) to any part of the path of the form $K_{i+1} \lessdot K_i \gtrdot K_{i-1}$, we will eventually either get $0$ or a non-zero scalar multiple of a flag path of the form $$(K_k \gtrdot \ldots \gtrdot J) \cdot (J \lessdot \ldots \lessdot K_1) \in (K_k) \Fl^+ (J) \Fl^- (K_1).$$  Thus the map induced by multiplication:
\[ \bigoplus_{L \subseteq K,J} (K) \Fl^+ (L) \otimes (L) \Fl^- (J) \longrightarrow (K) \widetilde{S}(M) (J)\]
is surjective.

Note that if there are multiple parts of the path of the form $K_{i+1} \lessdot K_i \gtrdot K_{i-1}$, applying the relation in either order gives the same result and so if the final result is non-zero, then the flag path $(K_k \gtrdot \ldots \gtrdot J) \cdot (J \lessdot \ldots \lessdot K_1)$ is uniquely determined and the set of such paths forms a basis for $\widetilde{S}(M)$.  Thus the map induced by multiplication is also injective and an isomorphism of $\Z$-modules.

Imposing the additional relation (FS) gives the projection $\widetilde{S}(M) \to S(M)$ and induced maps $\Fl^+ \to \Fs^+$ and $\Fl^- \to \Fs^-$.  Note that these maps complete the commutative diagram:

\[\begin{tikzcd}
 \bigoplus_{L \subseteq K,J} (K) \Fl^+ (L) \otimes (L) \Fl^- (J) \arrow[d,"",two heads]\arrow[r,"\cong"] & (K) \widetilde{S}(M) (J) \arrow[d,"",two heads] \\
\bigoplus_{L \subseteq K,J} (K) \Fs^+ (L) \otimes (L) \Fs^- (J) \arrow[r,""] & (K) {S}(M) (J).
\end{tikzcd}
\]

It remains to show that the bottom map is an injection.  To do so we need to show that imposing the relation (FS) on $\widetilde{S}(M)$ does not impose any additional relations on $\bigoplus_{L \subset K,J} (K) \Fl^+ (L) \otimes (L) \Fl^- (J)$ other than the relation (FS) on $\Fl^+$ and $\Fl^-$ individually.

Consider an arbitrary flag path with a positive or negative $i$-gap, meaning a sequence of flats
\[ F = (K_l , \ldots , K_{i+1} , K_{i-1} , \ldots , K_1), \]
where $(K_{i-1} , \ldots , K_1)$ and $(K_{k} , \ldots , K_{i+1})$ are flag paths and either $K_{i+1}>K_{i-1}$ with $\rho(K_{i+1}) = \rho(K_{i-1}) + 2$ or $K_{i+1}<K_{i-1}$ with $\rho(K_{i+1}) = \rho(K_{i-1}) - 2$.

By relation (FS), the sum 
$$\sum_{K_{i+1} \gtrdot K \gtrdot K_{i-1}} (K_l , \ldots , K_{i+1} ,K, K_{i-1} , \ldots , K_1)$$
is equal to zero in $S(M)$.  On the other hand, in $\widetilde S(M)$, each summand $$(K_l , \ldots , K_{i+1} ,K, K_{i+1} , \ldots , K_1)$$ is equal to a scalar multiple of a flag path of the form $$(K_l \gtrdot \ldots \gtrdot J) \cdot (J \lessdot \ldots \lessdot K_1) \in (K_l) \Fl^+ (J) \Fl^- (K_1).$$  We wish to show that the resulting sum (after applying ($\Lambda$) as needed) in $\bigoplus_{L \subset K_1,K_{l}} (K_l) \Fl^+ (L) \otimes (L) \Fl^- (K_1)$ maps to zero in $\bigoplus_{L \subset K_1,K_l} (K_l) \Fs^+ (L) \otimes (L) \Fs^- (K_1)$.  

Suppose from some $j$, $K_{j+1} \lessdot K_j \gtrdot K_{j-1}$. Unless $j=i+1$ or $i-1$, after applying the relation ($\Lambda$), the resulting sum is still (a multiple of) a sum over all ways of completing a flag path with a gap.  Thus we are reduced to studying the case $j=i+1$ (where $\rho(K_{i+1}) = \rho(K_{i-1}) + 2$) and $j=i-1$ (where $\rho(K_{i+1}) = \rho(K_{i-1}) - 2$).

As the relations (FS) and ($\bigwedge$) are symmetric (i.e., preserved when the flag paths are reversed), it is enough to consider the case $j=i+1$ with $\rho(K_{i+1}) = \rho(K_{i-1}) + 2$.  Moreover, as the relations are also local we can simplify to only considering the case of 4-step flags.  We write $X= K_{i+2}, Z = K_{i+1}, Y = K_{i-1}$ to simplify notation and consider:
$$\sum_{Z \gtrdot K \gtrdot Y} (X \lessdot Z \gtrdot K \gtrdot Y).$$

Then either:
\begin{enumerate}
\item \label{case-contain} $X \gtrdot Y$,
\item \label{case-notmodular} $(X, Y)$ is not a modular pair, or
\item \label{case-modular} $(X,Y)$ is a modular pair and $X \not\gtrdot Y$
\end{enumerate}

In case~(\ref{case-contain}) then $Z \gtrdot X \gtrdot Y$ and so:
\[ \sum_{Z \gtrdot K \gtrdot Y} (X \lessdot Z \gtrdot K \gtrdot Y) = (X \lessdot Z \gtrdot X \gtrdot Y) + \sum_{K \neq X} (X \lessdot Z \gtrdot K \gtrdot Y) \]

Applying ($\Lambda$) to the first term gives $|Z \setminus X| (X , Y)$.  Applying ($\bigwedge$) twice to the other terms gives: 
\[(X \lessdot Z \gtrdot K \gtrdot Y) = -(X \gtrdot Y \lessdot K \gtrdot Y) = -|K \setminus Y| ( X \gtrdot Y). \]

Thus the sum gives:
\[ = \left( |Z \setminus X|-\sum_{K \neq X}  |K \setminus Y| \right) ( X \gtrdot Y).\]
As $Z \setminus Y$ is partitioned by the sets $K \setminus Y$ where $Z \gtrdot K \gtrdot Y$, the coefficient is equal to zero and so the sum vanishes in $\FlP(M)$.

In case~(\ref{case-notmodular}) applying ($\bigwedge$) (twice if needed) also gives zero.

In case~(\ref{case-modular}): As $X \not\gtrdot Y$, $X \neq K$ for any $Z \gtrdot K \gtrdot Y$. Thus either $(X,K)$ is modular and $(X ,Z ,K) = - (X , X\cap K ,K)$ or  $(X,K)$ is not modular and  $(X , Z ,K) = 0$.  Our sum then becomes:
$$\sum_{Z \gtrdot K \gtrdot Y} (X \lessdot Z \gtrdot K \gtrdot Y) = \sum_{Z \gtrdot K \gtrdot Y, (X,K)~\mathrm{modular}} - (X \gtrdot X \cap K \lessdot K \gtrdot Y) $$
$$= \sum_{Z \gtrdot K \gtrdot Y, (X,K)~\mathrm{modular}} (X \gtrdot X \cap K \gtrdot X \cap Y \lessdot Y) $$
To finish, we need to show that the map: $$\{ K \in \Lc ~|~Z \gtrdot K \gtrdot Y,(X,K)~\mathrm{is~modular}\} \to \{J \in \Lc~|~ X \gtrdot J \gtrdot X \cap Y\}$$ defined by $K \mapsto X \cap K$ is a bijection with inverse $J \mapsto Y \vee J$.  If $X \gtrdot J \gtrdot X \cap Y$, then $$X \cap (Y \vee J) = (X \cap Y) \vee J = J,$$
where the first equality follow from the assumption that $(X,Y)$ is a modular pair and the second equality is immediate from $J \gtrdot X \cap Y$.  For the other composition, suppose that $(X,K)$ is a modular pair and $Z \gtrdot K \gtrdot Y$.  Then
\[ (K \cap X) \vee Y = K \cap (X \vee Y) = K \cap Z = K. \]

We conclude that 
$$\sum_{Z \gtrdot K \gtrdot Y} (X \lessdot Z \gtrdot K \gtrdot Y) = \sum_{X \gtrdot J \gtrdot X\cap Y} (X \gtrdot J\gtrdot X \cap Y \lessdot Y). $$

Therefore, after applying ($\bigwedge$) as many times as necessary to $$\sum_{K_{i+1} \gtrdot K \gtrdot K_{i-1}} (K_l , \ldots , K_{i+1} ,K, K_{i-1} , \ldots , K_1),$$ the resulting sum in $\bigoplus_{L} (K_l) \Fl^+ (L) \otimes (L) \Fl^- (K_1)$ will equal zero or reduce to zero under the map to $\bigoplus_{L} (K_l) \Fs^+ (L) \otimes (L) \Fs^- (K_1)$.
\end{proof}

%%%%%%%%%%%%%%%%%%%
\section{Multiplication and cellular structure}
\label{sec-mult}
%%%%%%%%%%%%%%%%%%%

By the result of the previous section, the set of flag paths of the form $(K \gtrdot \ldots \gtrdot J \lessdot \ldots \lessdot L) \in (K) \Fl^+ (J) \Fl^- (L)$ spans $S(M)$.  In this section we compute the multiplication formula in $\widetilde S(M)$ for such paths and use it to describe a cellular structure on $S(M)$.

For the next proposition we adopt the following notation.  Let $K_k \lessdot K_{k+1} \lessdot \ldots \lessdot K_n$ and $L_l \lessdot L_{l+1} \lessdot \ldots \lessdot L_n$ be flags of flats in $M$ such that $K_n=L_n$, $\rho(K_i)=i$, and $\rho(L_j)=j$.  Let $D_i := K_i \setminus K_{i-1}$ for $i = k+1, \ldots, n$, $E_l=L_l$, and $E_j := L_j \setminus L_{j-1}$ for $j =l+1, \ldots, n$.

\begin{proposition}\label{prop-mult}
In $\widetilde{S}(M)$ (and hence also in $S(M)$), the flag path
\[ (K_k \lessdot K_{k+1} \lessdot \ldots \lessdot K_n = L_n \gtrdot L_{n-1} \gtrdot \ldots \gtrdot L_l ) \]
is equal to zero if $(K_i,L_j)$ is not modular for some $i$ and $j$.  Otherwise, let 
$$\sigma_{K,L}(i):= \min\{j~|~D_i \cap E_j \neq \varnothing\}.$$
Then $\sigma_{K,L}$ defines a double coset $\sigma_{K,L} \in \mathfrak{S}_k \backslash \mathfrak{S}_n / \mathfrak{S}_l$.

If $(K_i,L_j)$ is modular for all $i$ and $j$ then
\[ (K_k \lessdot K_{k+1} \lessdot \ldots \lessdot K_n = L_n \gtrdot L_{n-1} \gtrdot \ldots \gtrdot L_l )  \] 
\[= \mathrm{sgn}(\sigma_{K,L}) \cdot \prod_{i} |D_i \cap E_{\sigma(i)} | \cdot F,\]
where $\mathrm{sgn}(\sigma_{K,L})$ is the sign of the minimal coset representative of $\sigma_{K,L}$, the product runs over all $i=k \ldots, n$ such that $\sigma(i)>l$, and $F$ is the flag path obtained from the sequence of flats $$(K_k , K_k \cap L_{n-1} , \ldots , K_k \cap L_{l+1} , K_k \cap L_l  , K_{k+1} \cap L_l , \ldots , K_{n-1} \cap L_l , L_l )$$
by removing any consecutive repetitions.
\end{proposition}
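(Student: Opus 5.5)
The plan is to prove the formula by induction on the total length $(n-k)+(n-l)$ of the path, straightening the unique peak at $K_n=L_n$ with the up-down relation ($\bigwedge$). That relation only ever rewrites a peak $K\lessdot L\gtrdot K'$, so the proof reduces to tracking one peak as it is pushed down. In the base case, $k=n$ or $l=n$, the path is already monotone and there is nothing to prove: $\sigma$ is trivial and the product is empty.

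For the inductive step, apply ($\bigwedge$) to the peak $K_{n-1}\lessdot K_n\gtrdot L_{n-1}$.
\begin{enumerate}
\item If $K_{n-1}=L_{n-1}$, we get $|K_n\setminus K_{n-1}|=|D_n\cap E_n|$ times the shorter path $(K_k\lessdot\cdots\lessdot K_{n-1}\gtrdot\cdots\gtrdot L_l)$. This corresponds to $\sigma(n)=n$.
\item If $K_{n-1}\neq L_{n-1}$ and $(K_{n-1},L_{n-1})$ is not modular, the path is $0$.
\item If $K_{n-1}\neq L_{n-1}$ and the pair is modular, we get $-(\ldots K_{n-2}\lessdot K_{n-1}\gtrdot K_{n-1}\cap L_{n-1}\lessdot L_{n-1}\gtrdot L_{n-2}\ldots)$. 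Now there are two peaks, at $K_{n-1}$ and at $L_{n-1}$.
\end{enumerate}
Iterating, the straightening is a sweep through the grid of flats $K_i\cap L_j$. Each application of ($\bigwedge$) at a peak $K_i\cap L_{j+1}$, with neighbours $K_{i-1}\cap L_{j+1}$ and $K_i\cap L_j$, does one of three things. It kills the term if those two flats form a non-modular pair. It contributes a factor $|D_i\cap E_j|$ if the two flats coincide, meaning the grid has a "collapse" at $(i,j)$. Otherwise it contributes a sign $-1$, a "crossing". By the remark in the proof of the Reedy decomposition, the order of application is irrelevant. So I would organize the computation as reducing the path along the rows of the grid in order, which yields exactly the flag path $F$ of the statement, through $K_k\cap L_j$ and then $K_i\cap L_l$, after deleting repetitions.

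Three things must then be verified.

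First, the combinatorics of the grid. Each intermediate flat $K_i\cap L_j$ has the rank predicted by modularity, $\rho(K_i\cap L_j)=i+j-n$ when all pairs are modular. The step from $K_i\cap L_{j}$ to $K_i\cap L_{j-1}$ is a cover or an equality, and it is an equality precisely when $D_i\cap E_j=\varnothing$. This uses the modular identity $X\cap(Y\vee Z)=(X\cap Y)\vee Z$ recalled in Section~\ref{sec-matroids}. It also uses the observation that $D_i\cap E_j\neq\varnothing$ iff $(K_{i-1}\vee(K_i\cap L_j))=K_i$ and $K_i\cap L_{j-1}\neq K_i\cap L_j$. From this, $\sigma(i)=\min\{j: D_i\cap E_j\neq\varnothing\}$ marks exactly where the $i$th row collapses. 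The collapses below row $l$ (those with $\sigma(i)>l$) are the steps that produce the factors $|D_i\cap E_{\sigma(i)}|$. The collapses with $\sigma(i)\le l$ are absorbed into $L_l$ and give no scalar.

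Second, the zero statement. If some $(K_i,L_j)$ is non-modular, I would show that the sweep must at some stage apply ($\bigwedge$) to a non-modular pair $(K_{i'}\cap L_{j'+1}, K_{i'+1}\cap L_{j'})$, and hence every term vanishes. I would argue by choosing a non-modular pair $(K_i,L_j)$ with $i+j$ maximal and showing that the sweep reaches exactly that configuration. Semimodularity gives the rank drop needed to contradict a cover.

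Third, the sign. The number of $-1$ factors equals the number of crossings, and I expect the main obstacle to be checking that this count is the length of the minimal double coset representative of $\sigma_{K,L}$, and that $\sigma_{K,L}$ is well defined modulo $\mathfrak S_k\times\mathfrak S_l$. I would first treat the Boolean-like case, where every nonempty $D_i\cap E_j$ is a single point. There the grid is a permutation matrix and the sweep is bubble-sorting, so the crossings count inversions. I would then argue that the general case only adds the collapse factors without changing which cells cross. Inversions involving indices $\le k$ or $\le l$ never occur in the sweep, because those rows and columns are frozen inside $K_k$ and $L_l$; this is what makes the minimal representative the right one.
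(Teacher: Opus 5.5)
Your overall strategy matches the paper's argument: straighten the single peak row by row through the grid of flats $K_i\cap L_j$, with each box contributing $0$, $-1$ or a collapse factor. The gap is in the grid bookkeeping you list under ``First'' and ``Third''. Several of those claims are false, and they are exactly the facts that locate the collapses and the crossings.

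\textbf{The grid lemmas.} Even when all pairs are modular one only has $\rho(K_i\cap L_j)=i+j-\rho(K_i\vee L_j)$, not $i+j-n$; for instance, if $K_\bullet=L_\bullet$ it equals $\min(i,j)$. With your formula the grid would contain no repeated flats. In general it does, and there are boxes whose top-left corner equals one of its two neighbours. These are not peaks and contribute $1$ (the paper's fourth box type), a case your trichotomy omits.

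Likewise, $K_i\cap L_{j-1}=K_i\cap L_j$ holds iff $K_i\cap E_j=\varnothing$, not iff $D_i\cap E_j=\varnothing$. For a counterexample take the Boolean matroid with $K_\bullet=L_\bullet=(\varnothing\subset\{a\}\subset\{a,b\}\subset\{a,b,c\})$ and $i=2$, $j=1$. Your stated characterization of $D_i\cap E_j\neq\varnothing$ also fails: take $K_1=\{b\}$, $L_1=\{a\}$, $K_2=L_2=\{a,b\}$ and $i=j=2$.

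\textbf{What is true, and the missing idea.} Under modularity, the \emph{vertical} step $K_{i-1}\cap L_j\subseteq K_i\cap L_j$ is a cover iff $\sigma(i)\le j$, and an equality otherwise. Telescoping in $i$ gives $\#\{i>k:\sigma(i)\le j\}=j-\rho(K_k\cap L_j)$. Hence each $j>l$ is ``entered'' exactly once: either by a unique $i>k$ with $\sigma(i)=j$, or already by $K_k$ (that is, $K_k\cap E_j\neq\varnothing$). This counting identity does three jobs:
\begin{enumerate}
\item it makes $\sigma$ a well-defined double coset;
\item for $\sigma(i)>l$ it forces $K_{i-1}\cap E_{\sigma(i)}=\varnothing$, so the box with top-left corner $K_i\cap L_{\sigma(i)}$ is a genuine collapse with factor $|K_i\cap E_{\sigma(i)}|=|D_i\cap E_{\sigma(i)}|$ (in your labelling the peak $K_i\cap L_{j+1}$ gives $|D_i\cap E_{j+1}|$, not $|D_i\cap E_j|$);
\item it shows that the box at $K_i\cap L_j$ is a crossing iff $\sigma(i)<j$ and $E_j$ was entered by $K_k$ or by some row $i'<i$.
\end{enumerate}

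\textbf{The sign.} That description also refutes your claim that inversions involving indices $\le k$ or $\le l$ never occur. Take $k=l=n-1$ with $K_{n-1}\neq L_{n-1}$ modular. Then $(K_{n-1}\lessdot K_n\gtrdot L_{n-1})=-(K_{n-1}\gtrdot K_{n-1}\cap L_{n-1}\lessdot L_{n-1})$, and the only $-1$ is the inversion of $s_{n-1}$, which pairs the $K_k$-block with the $L_l$-block. What is true is only that no crossing occurs between two indices of the same frozen block; that is why the \emph{minimal} representative gives the count. Your reduction to the Boolean case is legitimate, but only after the counting identity shows that the box types depend solely on this entry data.

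\textbf{The vanishing statement.} ``Take $i+j$ maximal and show the sweep reaches that configuration'' is inaccurate, since an earlier row may already vanish (harmlessly). The clean route is row by row. The one-row case vanishes iff some $(K_{n-1},L_j)$ is non-modular. Moreover, if $(K_i,L_j)$ is modular, then $(K_{i-1},K_i\cap L_j)$ is modular iff $(K_{i-1},L_j)$ is; when $K_i\cap L_j\le K_{i-1}$, use semimodularity to get $\rho(K_{i-1}\vee L_j)=\rho(K_i\vee L_j)-1$. So the one-row criterion propagates down the grid.
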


\begin{proof}
We first check the formula in the case $k=n-1$.  

Suppose that for some $j$, then pair $(K_{n-1},L_{n-j})$ is not modular.  We may take $j$ to be the minimal such $j$.  In particular, that means $K_{n-1} \not> L_{n-j}$ and so for any $i = 1, \ldots, j$, 
\[ K_{n-1} \geq K_{n-1} \cap L_{n-i+1} \neq L_{n-i} \geq L_{n-j}.\]
Repeatedly applying the down-up relation ($\bigwedge$) to $(K_{n-1} \lessdot L_n \gtrdot L_{n-1} \gtrdot \ldots \gtrdot L_l )$, we get:
\[(-1)^{j} (K_{n-1} \gtrdot K_{n-1} \cap L_{n-1} \gtrdot \ldots \gtrdot K_{n-1} \cap L_{n-j+1} \lessdot L_{n-j+1} \gtrdot L_{n-j} \gtrdot \ldots \gtrdot L_l ). \]
Having assumed that $(K_{n-1}, L_{n-j})$ is not modular, we have that $(K_{n-1}\cap L_{n-j-1} , L_{n-j})$ is not modular either and so the expression vanishes as expected.

\medskip

Now suppose that $(K_{n-1},L_{n-i})$ is modular for all $i$. 
By definition, $\sigma_{K,L}(n) = \min\{j~|~D_n \cap E_j \neq \varnothing\}$.  

If $\sigma_{K,L}(n) > l$, then for all $i$ such that $i > \sigma_{K,L}(n)$, we have that $K_{n-1}\cap L_{i}  \neq L_{i-1}$.  Let $j = \sigma_{K,L}(n)$.  Repeatedly applying the down-up relation $n-j$ times to $(K_{n-1} \lessdot L_n \gtrdot L_{n-1} \gtrdot \ldots \gtrdot L_l )$, we get:
\[(-1)^{n-j} (K_{n-1} \gtrdot K_{n-1} \cap L_{n-1} \gtrdot \ldots \gtrdot K_{n-1} \cap L_{j} \lessdot L_{j} \gtrdot L_{j-1} \gtrdot \ldots \gtrdot L_l ). \]
Our choice of $j$ implies that $L_{j-1} \subseteq K_{n-1} \cap L_j$.  By modularity the rank of $K_{n-1} \cap L_j$ is $j-1= \rho(L_{j-1})$, so $K_{n-1} \cap L_{j} = L_{j-1}$. Applying the down-up relation ($\bigwedge$) one more time to the previous expression we get
\[(-1)^{n-j} |L_{j} \setminus L_{j-1}| (K_{n-1} \gtrdot K_{n-1} \cap L_{n-1} \gtrdot \ldots \gtrdot K_{n-1} \cap L_{j} = L_{j-1} \gtrdot \ldots \gtrdot L_l ). \]
Note that $|L_{j} \setminus L_{j-1}| = |D_n \cap E_{j}|$ and $n-j$ is the number of inversions of the minimal coset representative of $\sigma_{K,L}$.  Thus the formula holds in this case as well.

\medskip

If $\sigma_{K,L}(n) = l$, then $n-l$ is the number of inversions of the minimal coset representative of $\sigma_{K,L}$ and repeatedly applying the down-up relation ($\bigwedge$) to $(K_{n-1} \lessdot L_n \gtrdot L_{n-1} \gtrdot \ldots \gtrdot L_l )$, we get:
\[(-1)^{n-l} (K_{n-1} \gtrdot K_{n-1} \cap L_{n-1} \gtrdot \ldots \gtrdot K_{n-1} \cap L_{l} \lessdot L_{l}). \]

This completes the case of $k=n-1$.  

\medskip

For the general case we then repeat the argument using an $\{k,k+1,\ldots, n\}\times \{l,l+1,\ldots, n\}$-sized grid with the first index decreasing from north to south and the second decreasing from west to east.  At the $(i,j)$-grid point we write $K_i \cap L_j$.  In particular $K_n=L_n$ is at the upper left corner, with the flag of $K$'s decreasing down the left hand side, and the flag of $L$'s along the top.   

Applying the up-down relation as many times as possible corresponds to converting the path that runs up the left-hand side and along the top to the one that goes across (to the right) along the bottom and then up along the right-hand side.  To keep track of the result, in each box of the grid we enter an integer.  Here we label each box of the grid by the coordinates of its upper left corner.  We enter
\begin{itemize}
\item $0$ if the lower-left and upper-right corners of the box do not form a modular pair, otherwise: 
\item $-1$ if the lower-left and upper-right corners of the box are not equal and neither is equal to the upper left corner, 
\item $|(K_i \cap L_{j}) \setminus (K_i \cap L_{j-1})| = |D_i \cap E_{j}|$ if the lower-left and upper-right corners are equal (but not the same as the upper-left corner), 
\item $1$ if the upper-left corner of the box is equal to the lower-left or upper-right corner of the box. 
\end{itemize}
The result is the product of the contents of all the boxes times the path along the bottom and then up along the right-hand side with any redundancies removed.  By the argument above, if the pair of flags is modular, then the $i$th row of boxes contributes a factor $|D_i \cap E_{\sigma_{K,L}(i)}|$ times $-1$ to the number of boxes to the left of $(\sigma_{K,L}(i),i)$ that don't lie below a box $(\sigma_{K,L}(j),j)$ (i.e., the number of $1\leq j<i$ such that $\sigma(j)>\sigma_{K,L}(i)$, where $\sigma$ is the minimal coset representative of $\sigma_{K,L}$).  The number of $-1$'s that occur is then the length of the minimal coset representative.
\end{proof}

We record the following special case used in Section~\ref{sec-projgeom}.

\begin{corollary}\label{cor-flagmult}
In $S(PG(n-1,q))$, for any pair of flags $V_\bullet = (W=V_m \gtrdot \ldots \gtrdot V_1 \gtrdot V),~~W_\bullet = (W=W_m \gtrdot \ldots \gtrdot W_1 \gtrdot V)$, the expression
$$\sigma_{V_\bullet,W_\bullet}(i):= \min\{j~|~D_i \cap E_j \neq \varnothing\}$$
 defines an element $\sigma = \sigma_{V_\bullet,W_\bullet} \in \mathfrak{S}_m$ and 
\[ (V \lessdot V_1 \lessdot \ldots \lessdot V_m=W = W_m \gtrdot W_{m-1} \gtrdot \ldots \gtrdot W_1 \gtrdot V )  \] 
\[= \mathrm{sgn}(\sigma) q^{m \dim V +\Bell(w_0 \sigma)} (V),\]
where $w_0 \in \mathfrak S_m$ denotes the longest element (i.e., $w_0(1)=m, w_0(2) = m-1, \ldots, w_0(m)=1$) and $\Bell$ is the length function on $\mathfrak S_m$.
\end{corollary}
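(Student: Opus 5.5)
Corollary~\ref{cor-flagmult} is a direct specialization of Proposition~\ref{prop-mult}, with $K_\bullet = V_\bullet$ and $L_\bullet = W_\bullet$. The plan is to check that its hypotheses hold automatically, identify the double coset, and then evaluate the sign, the product of cardinalities, and the residual flag path $F$.

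\emph{Modularity and the permutation.} In a projective geometry every pair of subspaces is a modular pair, since $\dim(A+B)+\dim(A\cap B)=\dim A+\dim B$. Hence the vanishing case of Proposition~\ref{prop-mult} never occurs. Both flags run from $V$ to $W$ with ranks $\dim V+1,\dots,\dim V+m$, so the parabolic subgroups $\mathfrak S_k,\mathfrak S_l$ in Proposition~\ref{prop-mult} are trivial (after reindexing by $i-\dim V$). The double coset is therefore an honest element $\sigma\in\mathfrak S_m$.

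To see that $\sigma$ is a bijection, use modularity: $\dim(V_i\cap W_j)-\dim(V_{i-1}\cap W_j)$ jumps from $0$ to $1$ at exactly one $j$. That $j$ is the least index with $D_i\cap E_j\ne\varnothing$, and these jump indices are distinct for distinct $i$ (the Bruhat/relative position). The residual path $F$ passes through $V_{\dim V}\cap W_j$, i.e.\ through the entries $K_k\cap L_j$ with $K_k=L_l=V$, which all equal $V$. After removing repetitions, $F=(V)$.

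\emph{The scalar.} The sign $\sgn(\sigma)$ comes straight from Proposition~\ref{prop-mult}. For the product, in row $i$ we need $|D_i\cap E_{\sigma(i)}|$, the number of lines in $(V_i\cap W_{\sigma(i)})\setminus (V_{i-1}\cap W_{\sigma(i)}\,\cup\, V_i\cap W_{\sigma(i)-1})$. Here I would redo the count carefully rather than trust the set-theoretic labels. By the grid argument in the proof of Proposition~\ref{prop-mult}, the factor is $|(K_i\cap L_j)\setminus(K_i\cap L_{j-1})|$ where $K_i\cap L_{j-1}$ is a hyperplane in $K_i\cap L_j$. This is the number of points of a projective space minus a hyperplane, namely $q^{\dim(V_i\cap W_{\sigma(i)})-1}$.

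\emph{Summing exponents.} It remains to show
\[
\sum_{i=1}^m\bigl(\dim(V_i\cap W_{\sigma(i)})-1\bigr)=m\dim V+\Bell(w_0\sigma).
\]
The dimension of $V_i\cap W_{\sigma(i)}$ equals $\dim V$ plus $\#\{i'\le i:\sigma(i')\le\sigma(i)\}$, the standard rank-matrix description of relative position. Subtracting $1$ and summing over $i$ gives $m\dim V+\#\{i'<i:\sigma(i')<\sigma(i)\}$. The last count is the number of non-inversions of $\sigma$, which is $\binom m2-\Bell(\sigma)=\Bell(w_0\sigma)$.

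\emph{Main obstacle.} The delicate part is conventions: whether a row contributes when $\sigma(i)$ equals the bottom index (the condition $\sigma(i)>l$ in Proposition~\ref{prop-mult}), and whether the relevant count is inversions or non-inversions. The sign is insensitive to replacing $\sigma$ by $\sigma^{-1}$, but the exponent is not. I would settle both by testing $m=1$ (expect $q^{\dim V}$, from ($\bigwedge$)) and $m=2$ with equal and with distinct flags. Equal flags should give $(+)q^{2\dim V+1}$, by two successive applications of ($\bigwedge$) with coefficients $q^{\dim V+1}$ and $q^{\dim V}$.
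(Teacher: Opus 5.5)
Your proposal is correct and follows the paper's proof. You specialize Proposition~\ref{prop-mult} and count each factor $|D_i\cap E_{\sigma(i)}|$ as a hyperplane complement, giving $q^{\dim(V_i\cap W_{\sigma(i)})-1}$. You then sum the exponents to get $m\dim V$ plus the number of non-inversions $\Bell(w_0\sigma)$. Your worry about the condition $\sigma(i)>l$ is moot: since $D_i$ is disjoint from $V$, that condition holds automatically and all $m$ rows contribute.
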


\begin{proof}
The main thing that needs to be checked is that 
\[\prod_{i=1}^m |D_i \cap E_{\sigma(i)} | = q^{m \dim V+\Bell(w_0 \sigma) }. \]

Fix $1\leq i \leq m$ and let $j:= \sigma(i)$.  Then $D_i \cap E_{\sigma(i)} = (V_i \cap W_j) \setminus (V_i \cap W_{j-1})$, where $V_i \cap W_{j-1} = V_{i-1} \cap W_j$ is a hyperplane in $V_i \cap W_j$, and so $|D_i \cap E_{\sigma(i)}| = q^{\dim(V_i \cap W_j)-1}$.

Note that the codimension of $V_i \cap W_{\sigma(i)}$ in $V_i$ is equal to the number of $1\leq j <i$ such that $\sigma(j)>\sigma(i)$.  Thus the dimension of $V_i \cap W_{\sigma(i)}$ is equal to $\dim V_i = \dim V + i$ minus the number of $1\leq j <i$ such that $\sigma(j)>\sigma(i)$, or equivalently $\dim V+1$ plus the number of $1\leq j<i$ such that $\sigma(j)<\sigma(i)$.   Thus $|D_i \cap E_{\sigma(i)}|$ is equal to $q$ to the power $\dim V$ plus the number of $1\leq j<i$ such that $\sigma(j)<\sigma(i)$.

Taking the product over all $1\leq i \leq m$ gives $q$ to the power $m\dim V$ plus the number of pairs $(j,i)$ such that $j<i$ and $\sigma(j) < \sigma(i)$.  The number of such pairs is the number of non-inversions, or $\binom{m}{2}-\Bell(\sigma) = \Bell(w_0 \sigma)$.   Thus $\prod_{i=1}^m |D_i \cap E_{\sigma(i)} | = q^{\Bell(w_0 \sigma) + m \dim V}$.
\end{proof}

Using Proposition~\ref{prop-mult} we can show that $S(M)$ is a cellular algebra.  Recall:

\begin{definition}
Let $R$ be a commutative Noetherian integral domain. An $R$-algebra $A$ is called a \textit{cellular algebra} with cell datum $(\Lambda, M, C, i)$ where:
\begin{itemize}
\item $\Lambda$ is a finite partially ordered set,
\item for each $\lambda \in \Lambda$ there is an associated non-empty finite set $M(\lambda)$,
\item $i: A \to A$ is an $R$-linear anti-automorphism such that $i^2 = \mathrm{id}_A$, and
\item for each $\lambda \in \Lambda$ and pair of elements $S,T \in M(\lambda)$, there is an element $C^\lambda_{ST} \in A$,
\end{itemize}
such that the following conditions are satisfied:
\begin{enumerate}
\item \label{cell1} The set $\{C^\lambda_{ST} ~|~\lambda \in \Lambda,~S,T \in M(\lambda) \}$ is an $R$-basis for $A$.
\item \label{cell2} The involution $i$ sends $C^\lambda_{ST}$ to $C^\lambda_{TS}$.
\item \label{cell3} For each $\lambda \in \Lambda$, $S,T \in M(\lambda)$ and $a \in A$, the product $a C^\lambda_{ST}$ can be expressed as
\[ a C^\lambda_{ST} = \left( \sum_{U \in M(\lambda)} r_a(U, S) C^\lambda_{UT} \right) + r',\] 
where $r'$ is a linear combination of basis elements with upper index $\mu$ strictly smaller than $\lambda$, and where the coeﬃcients $r_a(U, S) \in R$ do not depend on $T$.
\end{enumerate}
\end{definition}

To describe a cell datum for $S(M)$, we fix a labelling of flats, meaning simply a map $\ell: \Lc \to E$ such that $\ell(K) \in K$ for each $K$ (for example, we could order the set $E$ and let $\ell(K)$ be the least element of $K$).\footnote{While this notation for the labelling clashes with our notation for the characteristic of $k$, we only use it in this section.}  

A flag $(K_k \gtrdot \ldots \gtrdot K_1 \gtrdot K_0)$ is \textit{neat} (with respect to $\ell$) if $\ell(K_i) \in K_i \setminus K_{i-1}$ for all $i=1, \ldots, k$.  Brylawski--Varchenko show~\cite[(2.12)-(2.13)]{BV} the set of all neat flags (for a fixed $\ell$) is a $\Z$-basis for $\Fs^+$.  

Let $\iota$ denote the involution of $S(M)$ (and $\widetilde S(M)$) that sends a flag path $F=(K_1, \ldots, K_k)$ to the flag path $\iota(F) = (K_k, \ldots,K_1)$ obtained by listing the same flats in the reverse order.  

For each flat $K$, let $N(K)$ denote the set of neat flags $$(K_k \gtrdot K_{k-1} \gtrdot \ldots \gtrdot K_1 \gtrdot K).$$
For any pair of elements $F = (K_k , \ldots , K_1 , K), G = (L_l , \ldots , L_1, K) \in N(K)$, let 
\[C^K_{FG} = F \cdot \iota(G) =  (K_k \gtrdot \ldots \gtrdot K_1 \gtrdot K \lessdot L_1 \lessdot \ldots \lessdot L_l)\]

\begin{theorem}\label{thm-cell}
$(\Lc, N, C, \iota)$ is a cell datum for $S(M)$.
\end{theorem}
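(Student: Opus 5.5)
We check the three cellular axioms in turn, with $\Lambda = \Lc$ partially ordered so that the "strictly smaller" cells in axiom (\ref{cell3}) are those indexed by flats $K' \supsetneq K$. This matches the highest weight order $\Lc^{\mathrm{op}}$. Equivalently, the ideal spanned by cells of lower index should be the span of paths passing through a flat strictly larger than $K$.

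\textbf{Axiom (\ref{cell1}).} By the Reedy decomposition theorem, multiplication gives an isomorphism $\bigoplus_K \Fs^+(K)\otimes (K)\Fs^-$ onto $S(M)$. By Brylawski--Varchenko~\cite[(2.12)-(2.13)]{BV}, the neat flags ending at $K$ form a $\Z$-basis of $\Fs^+(K)$. Applying the involution $\iota$, which preserves the relation (FS), their reverses form a basis of $(K)\Fs^-$. Hence the products $C^K_{FG} = F\cdot\iota(G)$ form a $\Z$-basis of $S(M)$.

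\textbf{Axiom (\ref{cell2}).} The map $\iota$ is a well-defined anti-automorphism of $S(M)$ with $\iota^2=\mathrm{id}$, because both (FS) and ($\bigwedge$) are invariant under reversing paths; the latter holds since its right-hand sides are themselves reversal-symmetric. Moreover $\iota(F\cdot\iota(G)) = G\cdot\iota(F)$, so $\iota(C^K_{FG}) = C^K_{GF}$.

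\textbf{Axiom (\ref{cell3}).} It suffices to take $a$ to be a single generator, i.e.\ a one-step path $(X\gtrdot Y)$ or $(Y\lessdot X)$. Write $F=(K_k\gtrdot\cdots\gtrdot K_1\gtrdot K)$.
\begin{itemize}
\item If $a=(X\gtrdot K_k)$, then $aF$ is again an increasing flag ending at $K$. Rewriting it in the neat basis of $\Fs^+(K)$ gives $\sum_U r_a(U,F)\,U$ with coefficients independent of $G$, and hence $aC^K_{FG}=\sum_U r_a(U,F)\,C^K_{UG}$.
\item If $a=(Y\lessdot K_k)$, then $aF\in (Y)\widetilde S(M)(K)$ is a down--up path beginning at $Y$. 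By Proposition~\ref{prop-mult}, or simply by repeated use of ($\bigwedge$), it equals a scalar times an increasing-then-decreasing path $(Y\gtrdot\cdots\gtrdot J\lessdot\cdots\lessdot K)$ with $J\subseteq Y$.
\begin{itemize}
\item If $J=K$, the result is a scalar times an increasing flag from $K$ to $Y$; this flag lies in $\Fs^+(K)$, and we expand it in neat flags as in the first case.
\item If $J\subsetneq K$, the product $aF\cdot\iota(G)$ is a path through $J$. Expanding it in the cell basis via the Reedy isomorphism, only cells with index $J$ appear.
\end{itemize}
\end{itemize}

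The main obstacle is the second bullet, specifically the order convention. The down--up rewriting passes through a \emph{smaller} flat $J\subsetneq K$, so these error terms are "lower" exactly when smaller flats count as lower. I will therefore take $\Lambda=\Lc$ ordered by inclusion, noting that cellularity works with either convention up to reversing the order. This has to be reconciled with the first bullet, where the leading coefficients come from the $J=K$ terms and must be independent of $G$. They are, because the reduction only touches the left factor $F$ and $\iota(G)$ is carried along unchanged. The resulting coefficients $r_a(U,F)$ are the structure constants of the left action on $\Fs^+(K)$ modulo paths through smaller flats.

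The remaining care is to check that "paths through a flat $J\subsetneq K$" span a two-sided ideal equal to the span of the $C^{J}_{\bullet\bullet}$ with $J<K$. This follows from the Reedy isomorphism: any path whose minimal flat is $J$ reduces to the span of $\Fs^+(J')\otimes(J')\Fs^-$ with $J'\subseteq J$, since applying ($\bigwedge$) never increases the minimal flat.
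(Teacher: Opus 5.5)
Once you settle on ordering $\Lc$ by inclusion, your argument is correct and is essentially the paper's. Axiom (\ref{cell1}) comes from the Reedy decomposition plus the Brylawski--Varchenko neat basis. Axiom (\ref{cell2}) comes from reversal-invariance of the relations; you also justify that $\iota$ is well defined, which the paper takes for granted. Axiom (\ref{cell3}) comes from the fact that ($\bigwedge$)-rewriting yields a single path whose valley $J$ lies in $K$, with $J=K$ exactly when there is no descending part, so that $\iota(G)$ is merely carried along.

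The difference is organizational:
\begin{itemize}
\item The paper takes $a=F'\cdot\iota G'$ to be an arbitrary spanning element and applies Proposition~\ref{prop-mult} once to the middle factor $\iota G'\cdot F$, so the valley becomes $K'\cap K$. This handles all $a$ at once, with no separate ideal lemma.
\item You take $a$ to be a one-step generator. This needs only the single-peak rewriting, but you then have to show separately that cells of index $\subsetneq K$ span a left ideal in order to pass from generators to products. Your last paragraph does this correctly: each ($\bigwedge$) step replaces a peak by a subflat or deletes it beside a smaller neighbour, so the final valley lies in every flat of the original path.
\end{itemize}

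You should delete your opening paragraph rather than leave it next to the correction, because the order is forced, not a convention. With (\ref{cell3}) as stated, $\Lc^{\mathrm{op}}$ fails, since left multiplication really does produce smaller valleys. For example, $(Y\lessdot K_1\gtrdot K)=-(Y\gtrdot Y\cap K\lessdot K)\neq 0$ for distinct $Y,K\lessdot K_1$ forming a modular pair. Likewise, paths through flats strictly larger than $K$ do not span the ``lower'' part, as $(K\lessdot L\gtrdot K)=|L\setminus K|\,(K)$ shows. The highest-weight order $\Lc^{\mathrm{op}}$ is opposite to this cellular order.

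One caveat you share with the paper concerns (\ref{cell1}). The neat flags ending at $K$ form a basis of $\Fs^+\cdot(K)$ only for a labelling adapted to the contraction $M_K$, meaning $\ell(G)\in G\setminus K$ for all $G\supsetneq K$. No single global labelling achieves this. Take $E=\{1,2,3\}$ with flats $\varnothing,\{1\},\{2\},\{3\},E$: the flag $(E\gtrdot\{\ell(E)\})$ is never neat, even though $(E)\cdot\Fs^+\cdot(\{\ell(E)\})\cong\Z$. So $N(K)$ should be defined using a labelling $\ell_K$ chosen separately for each $K$; nothing else in either argument changes.
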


\begin{proof} We check conditions (\ref{cell1})-(\ref{cell3}). Brylawski--Varchenko's basis result~\cite[(2.12)-(2.13)]{BV} together with the decomposition from Section~\ref{sec-reedy} implies that the set $$\coprod_{K \in \Lc} \{C^K_{FG}~|~F,G \in N(K)\}$$ is a $\Z$-basis for $S(M)$.

By definition, $\iota(C^K_{FG}) = \iota(F \cdot \iota(G)) = \iota^2(G) \cdot \iota(F) = G \cdot \iota(F) = C^K_{GF}$.

To check (\ref{cell3}), fix a basis element $C^K_{FG}$.  It suffices to consider the case $a = F' \cdot \iota G'$, for flags 
$$F' = (K'_k \gtrdot \ldots \gtrdot K'_1 \gtrdot K'), \quad G' = (L'_l \gtrdot \ldots \gtrdot L'_1\gtrdot K').$$
Applying Proposition~\ref{prop-mult} to $\iota G' \cdot F$ in the product
\[ a C^K_{FG} = F' \cdot (\iota G' \cdot F) \cdot \iota G,\]
the result is either zero or a scalar multiple of $F' \cdot F'' \cdot \iota G'' \cdot \iota G$ for some 
$$F'' = (K' \gtrdot K''_{k'} \gtrdot \ldots \gtrdot K''_1 \gtrdot K''), \quad G'' = (K \gtrdot L''_{l'} \gtrdot \ldots \gtrdot L''_1\gtrdot K'').$$
Applying the relation $(FS)$ as need, $a C^K_{FG} $ is a linear combination of basis elements with upper index $K'' \leq K$.  Note that $K''=K$ only if $G'' = (K)$ and so in that case $a C^K_{FG} =  F' \cdot F'' \cdot \iota G$ is a linear combination $\sum_{H} r_a(H, F) C^K_{HG}$ and the coefficients $r_a(H, F) \in \Z$ do not depend on $G$. \end{proof}

%%%%%%%%%%%%%%%%%%%
\section{Standard modules}
\label{sec-modules}
%%%%%%%%%%%%%%%%%%%

For each flat $K \in \Lc$, the standard (or cell) module $\Delta_K$ is defined as $\Fs^+\cdot (K)$ viewed as the quotient of $S(M)\cdot (K)$ by the image of all flag paths that pass through a flat that does not contain $K$.  Equivalently, the standard module $\Delta_K$ can be viewed as the free $\Z$-module with basis $N(K)$, the set of neat flags \[(K_k \gtrdot K_{k-1} \gtrdot \ldots \gtrdot K_1 \gtrdot K),\] with the $S(M)$-module structure defined by:
\[ a \cdot F = \sum_{H \in N(K)} r_a(H, F) H, \]
for the coefficients $r_a(H,F) \in \Z$ defined by the cell datum.  In particular, for the minimal flat $K_0$ of $M$, the $S(M)$-module structure of $\Delta_{K_0}$ comes from the fact that $\Delta_{K_0} = \Fs^+ \cdot (K_0) \subset S(M)$ is a left ideal.

As we discussed in the examples, each $S(M)$-module has a natural weight space decomposition.   For any $S(M)$-module $X$ and flat $K \in \Lc(M)$, we can consider the weight space $X^K:= (K)X \subseteq X$ and the corresponding weight space decomposition $X = \bigoplus_{K\in \Lc(M)} X^K$.

For example, $\Delta_{K} = \bigoplus_{H\supset K} \Delta_K^H$, where $\Delta_K^H =(H)\cdot \Fs^+ \cdot (K)$ is homogeneous of degree $\rho(H)-\rho(K)$.  By~\cite[Proposition 2.18]{BV}, the set of $(H) \cdot \Fs^+ \cdot (K)$ is a free $\Z$-module of rank $\mu^+(M^H_K)$.  Thus the graded rank of $\Delta_K$ is
\[ \sum_{H \supset K} \mu^+(K,H) t^{\rho(H)-\rho(K)} = (-t)^{\rho(M_K)} p(M_K;-t^{-1}).\]

By the Reedy decomposition from Section~\ref{sec-reedy},
$$S(M) \cong \bigoplus_{L<K,J} (K) \Fs^+ (L) \otimes (L) \Fs^- (J) \cong \bigoplus_{L} \Delta_L \otimes \iota( \Delta_L).$$
We conclude that $S(M)$ is a free $\Z$-module of graded rank $\sum_F p(M_F;-t) p(M_F;-t^{-1}).$

Each cell module $\Delta_K$ inherits a natural symmetric bilinear form $$\langle~,~\rangle_K: \Delta_K \times \Delta_K \to \Z,$$ defined on flags $F=(K_k \gtrdot \ldots \gtrdot K_1 \gtrdot K), G=(L_l \gtrdot \ldots \gtrdot L_1 \gtrdot K) \in \Delta_K$ by:
\[ \langle F,G \rangle_K (K) =  \iota F \cdot G = (K \lessdot K_1 \lessdot \ldots \lessdot K_k) \cdot (L_l \gtrdot \ldots \gtrdot L_1 \gtrdot K).\]
In particular, the pairing vanishes when $K_k \neq L_l$ (and so $\Delta_{K} = \bigoplus_{H \supset K} (H) \Fs^+ (K)$ is an orthogonal decomposition).  By Proposition~\ref{prop-mult} and~\cite[Proposition 3.8]{BV} the bilinear form $\langle F,G \rangle_K$ is equal to Brylawski--Varchenko's bilinear form  $B(F,G)$ on flag space for the contraction $M_K$.

Now let $k$ be a field.  We may also define the standard module $\Delta_{K,k} = \Delta_K \otimes_\Z k$ for $S_k(M)$.

Let $\mathrm{rad}(K) = \{x \in \Delta_K ~|~ \langle x,y \rangle_K = 0~\mathrm{for~all}~y \in \Delta_K\}$ and $L_{K,k} := \Delta_{K,k}/\mathrm{rad}(K)$.  Note that $\langle (K),(K) \rangle_K = 1 \neq 0$, so $L_{K,k} \neq 0$. 

By the theory of cellular algebras, the set $\{L_{K,k}~|~ K \in \Lc\}$ is a complete set of non-isomorphic absolutely irreducible $S_k(M)$-modules and each standard module $\Delta_{K,k}$ is indecomposable with simple cosocle $L_{K,k}$.

In particular, $\Delta_{K,k}$ is equal to the simple module $L_{K,k}$ if and only if the radical of the form $\langle ~,~ \rangle_K$ is trivial, or equivalently $\langle ~,~ \rangle_K$ is non-degenerate.  Comparing Proposition~\ref{prop-mult} and~\cite[Proposition 3.8]{BV}, we find that $\langle ~,~ \rangle_K$ is equal to Brylawski--Varchenko's bilinear form  $B$ on flag space for the contraction $M/K$.  By Brylawski--Varchenko's determinant formula \cite[Theorem 4.16]{BV}, the determinant of $B$ restricted to $(H) \cdot \Fs^+ \cdot (K)$ is equal to 
\[ \prod_{J \in \Lc, K < J \leq H} |J \setminus K|^{\beta(M^J_K)\mu^+(M^H_J)}. \]
As $\mu^+(M^H_J)$ is always positive and $\beta(M^J_K) \neq 0$ if and only if $M^J_K$ is connected, $\langle ~,~ \rangle_K$ restricted to $(H) \cdot \Fs^+ \cdot (K)$ is non-degenerate if and only if the characteristic of $k$ does not divide $|J \setminus K|$ for any flat $J$ such that $K < J \leq H$ and $M^J_K$ is connected.  We conclude:

\begin{theorem}\label{thm-det}
The standard module $\Delta_{K,k}$ is simple if and only if the characteristic of $k$ does not divide $|J \setminus K|$ for any flat $J>K$ such that $M^J_K$ is connected.  The algebra $S_k(M)$ is semisimple if and only if the characteristic of $k$ does not divide $|J \setminus K|$ for any pair of flats $J>K$ such that $M^J_K$ is connected.
\end{theorem}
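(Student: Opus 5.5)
The plan is to combine the cellular structure of Theorem~\ref{thm-cell} with the Brylawski--Varchenko determinant formula, working one weight space at a time. By the theory of cellular algebras, $\Delta_{K,k}$ is simple exactly when $\mathrm{rad}\langle~,~\rangle_{K,k}=0$, i.e.\ when the form is non-degenerate over $k$. The decomposition $\Delta_K=\bigoplus_{H\supseteq K}\Delta_K^H$ is orthogonal: $\langle F,G\rangle_K=0$ unless the top flats agree. Hence non-degeneracy of the whole form is equivalent to non-degeneracy of each restriction to $\Delta_K^H=(H)\cdot\Fs^+\cdot(K)$. Each of these is a free $\Z$-module of finite rank $\mu^+(M^H_K)$ with the integral basis of neat flags. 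So the restriction is non-degenerate over $k$ if and only if its integral Gram determinant is nonzero modulo $\ell=\mathrm{char}\,k$ (or simply nonzero when $\ell=0$).

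Next I would identify $\langle~,~\rangle_K$ on $\Delta_K^H$ with Brylawski--Varchenko's form $B$ on the flag space of $M^H_K$. This is done by comparing the multiplication formula of Proposition~\ref{prop-mult}, with unit weights, against \cite[Proposition 3.8]{BV}. Then \cite[Theorem 4.16]{BV} gives
\[\det\langle~,~\rangle_K\big|_{\Delta_K^H}=\prod_{K<J\le H}|J\setminus K|^{\beta(M^J_K)\,\mu^+(M^H_J)}.\]
The exponents are controlled by two facts: $\mu^+(M^H_J)>0$ always, and $\beta(M^J_K)\ge 0$ with equality exactly when $M^J_K$ is disconnected (it is never a loop here, since $J\supsetneq K$ is a flat and so has no loops after contracting $K$). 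Therefore the determinant is divisible by $\ell$ if and only if $\ell$ divides $|J\setminus K|$ for some $J$ with $K<J\le H$ and $M^J_K$ connected. In characteristic $0$ the determinant is a product of positive integers, hence nonzero.

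Ranging over all $H\supseteq K$, and taking $H=J$ to see that every $J>K$ occurs, proves the first assertion. For the second, I would use that a quasi-hereditary (or cellular) algebra is semisimple exactly when every standard module is simple. In a cellular algebra this holds because semisimplicity is equivalent to all cell forms being non-degenerate, so the decomposition matrix is the identity. Taking the union of the conditions over all flats $K$ then gives the semisimplicity criterion.

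The main obstacle is the identification of the two forms. Proposition~\ref{prop-mult} yields a signed product of the quantities $|D_i\cap E_{\sigma(i)}|$ over a double coset, and this must be matched to the BV formula after contracting $K$, since elements of $K$ should not be counted. Beyond that, I would spend the most care on checking that $\beta\neq 0$ precisely in the connected case, for rank at least one.
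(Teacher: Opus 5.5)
Your proposal is correct and follows the paper's route. Both arguments use the orthogonal weight decomposition $\Delta_K=\bigoplus_{H\supseteq K}\Delta_K^H$, identify $\langle~,~\rangle_K$ with Brylawski--Varchenko's form for the contraction (via Proposition~\ref{prop-mult} and \cite[Proposition 3.8]{BV}), and then apply \cite[Theorem 4.16]{BV} with $\mu^+>0$ and $\beta(M^J_K)\neq 0$ exactly when $M^J_K$ is connected. You also justify two steps the paper leaves implicit: $M^J_K$ is loopless because $K$ is a flat, and a cellular algebra is semisimple precisely when all its cell forms are nondegenerate.
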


As we did in the case of $S(PG(n-1,q))$, we also observe that the integral bilinear form $\Delta_K \times \Delta_K \to \Z$ induces a natural $S_k(M)$-module filtration (Jantzen filtration) on $\Delta_{K,k}$ when $k$ is a field $k$ of characteristic $\ell>0$.  
This `Jantzen' filtration on $\Delta_{K,k}$ is defined by
\[\Delta_{K,k} = (\Delta_{K,k})^0 \supset (\Delta_{K,k})^1 \supset \ldots \supset (\Delta_{K,k})^N=0,\]
where 
\[\Delta_K^i = \{ x \in \Delta_K~|~\langle x, v \rangle \in \ell^i \Z~\mathrm{for~all}~v\in \Delta_K \},\]
and $(\Delta_{K,k})^i$ be the image of $\Delta_K^i \otimes_\Z k$ in $\Delta_{K,k} = \Delta_K \otimes_\Z k$.

%%%%%%%%%%%%%%%%%%%
\section{Basis modules}
\label{sec-tilting}
%%%%%%%%%%%%%%%%%%%

Let $\Lambda(M) = \oplus_{p} \Lambda^p(M)$ be the exterior algebra over $\Z$ generated by the ground set $E$.  Fix a total order on $E$ and for a subset $S = \{s_1, s_2, \ldots, s_p\} \subset E$ ordered so that $s_i<s_{i+1}$, let $e_S$ denote the monomial $s_1 \wedge s_2 \wedge \ldots \wedge s_p \in \Lambda^p(M)$.  We endow $\Lambda(M)$ with the symmetric bilinear form $\langle~,~\rangle$ for which the monomials $e_S$ are orthonormal.

Let $\Lambda^p_q(M) \subset \Lambda^p(M)$ denote the submodule generated by monomials $e_S$ for all $S \subset E$ of cardinality $p$ and rank $q$.

Let $\B(M) = \Lambda^r_r(M)$ be the submodule generated by the monomials corresponding to bases of $M$.  For any $F, G \in \Lc$ let\footnote{In~\cite{BMmatroid}, we ordered the set of flats by reverse inclusion for geometric reasons and defined $\B^F_G:= \B(M^G_F)$.  Here I have chosen to abandon that convention as it seems unnecessarily confusing.}
\[ \B^G_F = \begin{cases} \B(M^G_F) & \mathrm{if~} F \subset G, \\
0 & \mathrm{otherwise.}
\end{cases}\]

For example, note that $\B^F_F = \B(M^F_F) = \Z\{e_{\varnothing}\}$, with generator corresponding to the empty basis.  If $F \lessdot G$, then the bases of $M^G_F$ are the elements of $G \setminus F$ and so $\B^G_F = \Z\cdot \{e_{\{i\}}~|~i\in G\setminus F)$.

We now fix a flat $F$ and let $\Bt_F = \bigoplus_{G \in \Lc} \B_F^G$.  Note that $\Bt_F$ has a basis labelled by the independent sets of $M_F$.  We will endow $\Bt_F$ with the structure of a $S(M)$-module.  To do so, we define the action for the generators of $S(M)$ and then check that the relations hold.

For $K \in \Lc$, let $\phi(K) \in \End(\Bt_F)$ be defined by: for $b \in \B_F^G$, let $\phi(K) (b) = b$ if $G=K$ and 0 otherwise.  

For $L \gtrdot K$, let $d_L^K = \sum_{i\in L\setminus K} i$.  We define the action of the operator $\phi(L,K)\in \End(\Bt_F)$ on $b\in \B^G_F$ by the equation: 
\[ \phi(L \gtrdot K) (b) = \begin{cases}
d^L_K \wedge b  \in \B_F^L & \mathrm{if}~K=G \\
0 & \mathrm{otherwise}.
\end{cases}
\]
Note that for any basis $B$ for $M^K_F$ and $i \in L \setminus K$, the union $B \cup \{i\}$ is a basis for $M^L_F$ and so $d^L_K \wedge b$ is a well-defined element of $\B_F^L$.

For $L \gtrdot K$ as above, let $\phi(K \lessdot L)\in \End(\Bt_F)$ be defined as the adjoint of the operator $\phi(L \gtrdot K)$ with respect to the pairing $\langle~,~\rangle$ on $\Bt_F$.  Explicitly, this means for a basis $B$ for $M^G_F$, $e_B \in \B_F^G$, and
\[ \phi(K \lessdot L) (e_B) = \begin{cases}
\varepsilon(i,B) e_{B\cap K} \in \B_F^K & \mathrm{if}~G=L, K \geq F,~\mathrm{and}~ B\cap K \in \Bas(M^K_F),\\
0 & \mathrm{otherwise},
\end{cases}
\]
where $\varepsilon(i,B)$ is the sign $-1$ to the number of $j \in B$ such that $j <i$.  In particular, if $B'$ is a basis for $M^K_F$ and $i \in L \setminus K$, then $\phi(K \lessdot L) (i \wedge e_{B'}) = e_{B'}$.  Note that as $B \cap K \subset K$ and linearly independent, the condition $B\cap K \in \Bas(M^K_F)$ is equivalent to $|B \setminus K|=1$.

For a general flag path $(K_1, \ldots, K_k)$, let $$\phi(K_1, \ldots, K_k) := \phi(K_1,K_2)\circ \phi(K_2,K_3) \circ \ldots \circ \phi(K_{k-1},K_k).$$

\begin{proposition}
The actions described above satisfy the relations (FS) and ($\bigwedge$) relations and thus define a $S(M)$-module structure on $\Bt_F$.
\end{proposition}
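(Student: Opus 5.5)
Because $\FlP(M)$ is the path algebra of the quiver of covering relations, the assignment $(K)\mapsto\phi(K)$, $(K,L)\mapsto\phi(K,L)$ automatically defines an action of $\FlP(M)$ on $\Bt_F$. The $\phi(K)$ are orthogonal idempotents summing to the identity, and $\phi(K_1,K_2)$ maps $\B^{K_2}_F$ to $\B^{K_1}_F$ and kills the other summands. So it only remains to check that the generators of the relations (FS) and ($\bigwedge$) act by zero. The map $(K_1,\ldots,K_k)\mapsto\phi(K_1,\ldots,K_k)$ intertwines $\iota$ with taking adjoints for $\langle~,~\rangle$, and both families of relations are invariant under $\iota$. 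Hence for (FS) it suffices to check the increasing half $\sum_X (K\gtrdot X\gtrdot L)=0$. The statement for $\sum_X(L\lessdot X\lessdot K)$ then follows by taking adjoints.

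\emph{Flag space relation.} Take $b=e_B\in\B^L_F$. Then $\sum_X\phi(K\gtrdot X)\phi(X\gtrdot L)(b)=\sum_X d^K_X\wedge d^X_L\wedge b$. Each $d^X_L=\sum_{i\in X\setminus L}i$. The sets $X\setminus L$, for $K\gtrdot X\gtrdot L$, partition $K\setminus L$, so $\sum_X d^X_L=d^K_L$. Expanding, the sum becomes $\sum_{i,j\in K\setminus L} c_{ij}\, j\wedge i\wedge b$, where $c_{ij}$ counts flats $X$ with $i\in X$ and $j\in K\setminus X$. If $\overline{L\cup\{i\}}=\overline{L\cup\{j\}}$ then $c_{ij}=0$. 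Otherwise $c_{ij}=c_{ji}=1$, because $X$ is forced to be $\overline{L\cup\{i\}}$ and is then determined. The sum is therefore symmetric in $i,j$ and vanishes by antisymmetry of $\wedge$. Terms whose wedge is not a basis of $M^K_F$ need no separate treatment: they cannot occur, since $i,j$ in distinct atoms over $L$ extend a basis.

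\emph{Up-down relation.} For $K,K'\lessdot L$ and $b=e_{B'}\in\B^{K'}_F$, compute $\phi(K\lessdot L)(d^L_{K'}\wedge e_{B'})=\sum_{i\in L\setminus K'}\phi(K\lessdot L)(i\wedge e_{B'})$. The operator $\phi(K\lessdot L)$ applied to $i\wedge e_{B'}$ is nonzero exactly when $|(B'\cup\{i\})\setminus K|=1$. There are three cases.
\begin{enumerate}
\item If $K=K'$, every $i\in L\setminus K$ contributes $e_{B'}$. This gives $|L\setminus K|\,b$.
\item Suppose $K\ne K'$ and $K\cap K'\lessdot K$. Then $B'\cap K$ has size $|B'|-1$ exactly when it spans $K\cap K'$ over $F$, with a unique element $j\in B'\setminus K$. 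The surviving terms are $i\in K\setminus K'$, each giving $\pm e_{(B'\setminus\{j\})\cup\{i\}}$. I expect the sign to come out as $-\varepsilon$, matching $-\phi(K\gtrdot K\cap K')\phi(K\cap K'\lessdot K')(e_{B'})=-d^K_{K\cap K'}\wedge\varepsilon(j,B')e_{B'\setminus j}$.
\item If $K\cap K'$ has corank $\ge 2$ in $K$, then $B'\cap K$ is too small for every $i$. The result is $0$.
\end{enumerate}

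The main obstacle is the sign bookkeeping in the second case. I would handle it by writing $i\wedge e_{B'}=\pm\, i\wedge j\wedge e_{B'\setminus j}$ and checking that removing $j$ versus $i$ introduces exactly the relative sign $-1$ from the transposition. I would also confirm that the $\varepsilon$ convention in the definition of $\phi(K\lessdot L)$ agrees with the adjoint of left wedging.
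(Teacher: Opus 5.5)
Your proposal is correct and follows essentially the same route as the paper. The (FS) check is the same cancellation of $i\wedge j$ against $j\wedge i$ over the rank-one flats of the rank-two interval, with the decreasing half obtained by adjunction. The ($\bigwedge$) check is the same case analysis, and your anticipated sign is confirmed exactly as in the paper by writing $i\wedge e_{B'}=-\varepsilon(j,B')\,j\wedge i\wedge e_{B'\setminus j}$ and using $\phi(K\lessdot L)(j\wedge x)=x$. The one thing to add is the paper's preliminary reduction to $K\supseteq F$: your nonvanishing criterion $|(B'\cup\{i\})\setminus K|=1$ drops the condition $K\geq F$ from the definition of $\phi(K\lessdot L)$, and your case analysis uses that condition implicitly, but when $K\not\supseteq F$ both sides of ($\bigwedge$) land in $\B^K_F=0$, so the relation holds trivially there.
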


\begin{proof} We begin with the relation (FS).  Suppose that $J,L \in \Lc$, $J < L$ and $\rho(L) = \rho(J)+2$.  We wish to show that $\sum_K \phi(L,K,J)$ acts on $\Bt_E$ by zero, where the sum is over all flats $K$ such that $L \gtrdot K \gtrdot J$. Note that such $K$ are naturally in bijection with the rank 1 flats of $M^L_J$.

The operator $\phi(L,K,J) = \phi(L,K) \circ \phi(K,J) \in \End(\Bt_F)$ acts on $\B^F_H$ by 0 unless $H=J$, in which case it acts as the wedge product on the left by 
$$\left(\sum_{i\in L\setminus K} i \right)\wedge \left(\sum_{j\in K\setminus J} j \right)=\sum_{i\in  L\setminus K, j\in K\setminus J} i \wedge j.$$

Now consider the sum over all intermediate flats $L \gtrdot K \gtrdot J$,
$$\sum_{L \gtrdot K \gtrdot J} \left(\sum_{i\in L\setminus K, j\in K\setminus J} i \wedge j \right).$$

As $M^L_J$ is a rank 2 matroid without loops, the set $L \setminus J$ is partitioned by the rank 1 flats of $M^L_J$ (the classes of parallel elements).  Thus a monomial $i\wedge j$ only appears in the sum if $\overline{\{i\}} \neq \overline{\{j\}}$.  In that case the monomial $i\wedge j$ appears only with positive multiplicity 1 in the summand labelled by $K = \overline{\{j\}}$ and with multiplicity $-1$  in the summand labelled by $K = \overline{\{i\}}$.  Thus the sum vanishes as claimed.  The case where $L < J$ and $\rho(L) = \rho(J)-2$ then follows by taking adjoints.  This completes the verification of the relation (FS).

\medskip

We now turn to the relation ($\bigwedge$).  Let $F=(J \lessdot K \gtrdot L)$, so in particular, $\rho(J) = \rho(L) = \rho(K)-1$.  We may assume that $J,L \geq F$ as otherwise both sides of the relation vanish.

If $J=L$, then by our definition, the operator $\phi(J, K, J) = \phi(J, K)\phi(K, J)$ acts on $\B_F^H$ by 0 unless $H=J$. For $e_B \in \B_F^J$ it acts by:
\[ \phi(J \lessdot K \gtrdot J)(e_B) =  \sum_{i\in K\setminus J} \phi(J \lessdot K)( i \wedge e_B) = \sum_{i\in K\setminus J} e_B = |K \setminus J| e_B. \]
Thus $\phi(J \lessdot K \gtrdot J) =  |K\setminus J| \cdot \phi(J)$ and so the relation holds in this case.

Now consider the case $J\neq L$.
By definition, the operator $\phi(J \lessdot K \gtrdot L)$ acts on $\B^H_F$ by 0 if $H \neq L$.  If $e_B \in \B^L_F$, where $B$ is a basis for $M^L_F$, then
\[ \phi(J \lessdot K \gtrdot L)(e_B)  =  \sum_{j\in K\setminus L} \phi(J \lessdot K) \left(j \wedge e_B \right).\]

The term $\phi(J \lessdot K) \left(j \wedge e_B \right)$ is equal to 0 unless $(\{j\} \cup B)\cap J$ is a basis for $M^J_F$.  In that case, as $K\neq L$, the set $(B \cup \{j\})\cap J$ has the same cardinality as, but is not equal to, $B$.  Thus if $\phi(J \lessdot K) \left(j \wedge e_B \right)\neq 0$, then $j \in J$ and there exists a unique $i \in B \setminus J$.  Thus $B' := B \setminus \{i\} = J \cap B$ is a basis for $M^{J \cap L}_F$ of cardinality $\rho(M^J_F)-1 = \rho(M^L_F)-1$, which implies that $J$ and $L$ form a modular pair.  We conclude that if $J$ and $L$ do not form a modular pair, then all of the terms $\phi(J \lessdot K) \left(j \wedge e_B \right)$ vanish and so $\phi(J \lessdot K \gtrdot L)=0$.

Now assume that $J$ and $L$ form a modular pair.  Suppose $B$ is a basis for $\B^L_F$.  If $|B \setminus J|>1$, then both $\phi(J \lessdot K \gtrdot L)(e_B)$ and $-\phi(J\gtrdot J\cap L \lessdot L)(e_B)$ vanish.  If $|B \setminus J|=1$, let $i\in B \setminus J$ be the unique element and $B' = B \cap J = B \setminus \{i\}$.  Then $B'$ is a basis for $M^{J \cap L}_F$ and we compute:
\[ \phi(J \lessdot K \gtrdot L)(i \wedge e_{B'}) = \phi(J\lessdot K) \left( \sum_{j \in K \setminus L} j \wedge i \wedge e_{B'} \right) =  \sum_{j \in K \setminus L} \phi(J\lessdot K)(j \wedge i \wedge e_{B'}) . \]
Recall that the term $\phi(J\lessdot K)(j \wedge i \wedge e_{B'})$ vanishes unless $j \in J$, so the sum is equal to
\[ \sum_{j \in J \setminus L} \phi(J\lessdot K)(j \wedge i \wedge e_{B'})  = - \sum_{j \in J \setminus L} \phi(J\lessdot K)(i \wedge j \wedge e_{B'}) = - \sum_{j \in J \setminus L}  j \wedge e_{B'}. \]
Similarly, applying $-\phi(J\gtrdot J\cap L \lessdot L)$ to $i \wedge e_{B'}$, we get the same result:
\[ -\phi(J\gtrdot J\cap L \lessdot L)(i \wedge e_{B'}) = -\phi(J\gtrdot J\cap L)(e_{B'}) = - \sum_{j \in J \setminus L}  j \wedge e_{B'}. \]
We conclude that $\phi(J \lessdot K \gtrdot L)$ and $-\phi(J\gtrdot J\cap L \lessdot L)$ agree on the set of all $e_B \in \B^L_F$ and thus are equal.  This completes the proof of relation ($\bigwedge$) and hence the homomorphism is well-defined. 
\end{proof}

\begin{remark}
I expect to show in a future paper that the $S_k(M)$-module $\Bt_F$ is tilting in the sense that it is self-dual and has a standard filtration and describe the Ringel dual of $S(M)$.
\end{remark}

%%%%%%%%%%%%%%%%%%%
\section{Matroidal Schur algebra}
\label{sec-schur}
%%%%%%%%%%%%%%%%%%%

In this section we recall the definition of the matroidal Schur algebra $\Rc(M)$ introduced in~\cite{BMmatroid}.  Let $\Bt = \bigoplus_{F,G \in \Lc} \B^G_F$ be the direct sum over all flats $F,G$ of $M$. Note that an independent set can be a basis for multiple distinct intervals of $M$.  Therefore, to avoid notational ambiguity, if $S$ is a basis for $M^G_F$ we write $e_{S,F}$ to be
the monomial $e_S$ as an element of $\B^G_F$.\footnote{The upper index $G = \overline{F \cup S}$ is determined by $F$ and $S$ and so there is no need to include it in the notation.}  Similarly, we may write $b_F$ to stress that $b$ is an element of $\Bt_F$.

We define $\B = \bigoplus_{F,G \in \F} \B^G_F$ to be the subspace of $\Bt$ obtained by considering only intervals between cyclic flats.  

The algebra $\Rc(M)$ is a subalgebra of $\End(\B)$ generated by various operators and their adjoints.  Similarly, we will show that $S(M)$ is isomorphic to a subalgebra of $\End(\Bt)$ generated by analogous operators and their adjoints.

To define the operators we make use of the following multiplication on $\Bt$ and its restriction to $\B$.  For any flats $F \subseteq G \subseteq H$, the union of a basis of $M^H_G$ and a basis of $M^G_F$ is a basis of $M^H_F$.  Thus we may define the multiplication
\[ \ast: \B^H_G \otimes \B^{G'}_F \to \B^H_F \]
induced by the wedge product on monomials if $G=G'$ and $0$ if $G \neq G'$.  

For any $b \in \Bt$, we define an operator $\tilde\mu_b$ on $\Bt$ given by
\[ \tilde\mu_b (x) = b_F \ast x, \]
 let $\tilde\kappa_b \in \End(\Bt)$ be the adjoint operator to $\tilde\mu_b$.  In particular, for any $b \in \B^G_F$, then $\tilde\mu_b(\B^K_L) \subseteq \B^G_L$ and is equal to zero if $K\neq F$.  Similarly, $\tilde\kappa_b(\B^K_L) \subseteq \B^F_L$ and is equal to zero if $K\neq G$

For cyclic flats $F,G \in \F$ and $b \in \B^G_F$ note that $\B \subseteq \Bt$ is a $\tilde\mu_b$- and $\tilde\kappa_b$-invariant subspace. We define $\mu_b$ and $\kappa_b, \in \End(\B)$ to be the restrictions of $\tilde\mu_b$ and $\tilde\kappa_b$ from $\Bt$ to $\cB$.

Let $\delta: \Lambda(M) \to \Lambda(M)$ be the differential defined by
\[ \delta(x) = \left(\sum_{s \in E} s \right) \wedge x. \]

Define $\cUc(M) := \Ker \left(\delta|_{\B(M)}: \Lambda^r_r(M) \to \Lambda^{r+1}_r(M) \right)$.  For any pair of flats $F \subseteq G$, we define a subspace $\cUc_F^G = \cUc(M^G_F)$ of $\B_F^G$.  Consider the subspaces 
\[\quad \cUc := \bigoplus_{F, G \in \F} \cUc_F^G \subseteq \B \quad\mathrm{and} \quad \tilde\cUc := \bigoplus_{F,G \in \Lc} \cUc_F^G \subseteq \Bt.\]

\begin{definition}\cite{BMmatroid} The matroidal Schur algebra $\Rc(M)$ is the subalgebra of $\End(\B)$ generated by the elements $\mu_\uc$ and $\kappa_\uc$ for all $\uc \in \cUc$. 
\end{definition}

We will show that $S(M)$ can be defined analogously:

\begin{theorem}\label{thm-isom} The algebra $S(M)$ is isomorphic to the subalgebra of $\End(\Bt)$ generated by the elements $\tilde\mu_\uc$ and $\tilde\kappa_\uc$ for all $\uc \in \tilde\cUc$.
\end{theorem}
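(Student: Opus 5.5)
Set $\Phi=\bigoplus_{F\in\Lc}\phi_F : S(M)\to\End(\Bt)$, where $\phi_F$ is the action of $S(M)$ on $\Bt_F$ constructed in Section~\ref{sec-tilting}; note $\Bt=\bigoplus_F\Bt_F$. The proposition of that section makes $\Phi$ a well-defined algebra homomorphism. I would then show two things: the image of $\Phi$ is exactly the subalgebra $A\subseteq\End(\Bt)$ generated by the $\tilde\mu_\uc,\tilde\kappa_\uc$ with $\uc\in\tilde\cUc$, and $\Phi$ is injective.

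\emph{Step 1: image equals $A$.} On $\Bt$, $\Phi(L\gtrdot K)=\tilde\mu_{d^L_K}$ with $d^L_K\in\B^L_K$. Since $M^L_K$ has rank one, $\delta(d^L_K)=(\sum_s s)\wedge d^L_K=0$ (each $s\wedge s=0$ and $i\wedge j+j\wedge i=0$). Hence $d^L_K\in\cUc^L_K$. Taking adjoints gives $\Phi(K\lessdot L)=\tilde\kappa_{d^L_K}$, and $\Phi((K))$ is the projection onto $\bigoplus_F \B^K_F$. Thus $\Phi(S(M))\subseteq A$, provided the idempotents lie in $A$; this holds because $\tilde\mu_{e_\varnothing}$ for $e_\varnothing\in\cUc^K_K=\B^K_K$ is that projection.

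For the reverse inclusion I must show every $\tilde\mu_\uc$ with $\uc\in\cUc^G_F$ lies in the image. The key claim is that $\cUc(M^G_F)$ is spanned by iterated products $d^{K_k}_{K_{k-1}}\ast\cdots\ast d^{K_1}_{K_0}$ over full flags $F=K_0\lessdot\cdots\lessdot K_k=G$. Equivalently, the map $(G)\Fs^+(F)\to\B^G_F$, $F_\bullet\mapsto$ the wedge product, is an isomorphism onto $\cUc^G_F$.

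This is essentially the Orlik--Solomon/flag-space duality of Brylawski--Varchenko. Flag space is dual to the top Orlik--Solomon component, whose dual sits inside $\Lambda^r_r$ as $\ker\delta$. I would prove it as follows. Injectivity and the ranks come from the basis computation. The rank of $(G)\Fs^+(F)$ is $\mu^+(M^G_F)$ by \cite[Proposition 2.18]{BV}. The rank of $\cUc(M^G_F)$ is also $\mu^+$: this follows from acyclicity of $\delta$ restricted along rank (the standard Orlik--Solomon computation), or from \cite{BMmatroid}. Then I would check that the image is saturated, using neat flags paired against NBC-type bases to get a unitriangular matrix.

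Since $\tilde\mu$ is multiplicative ($\tilde\mu_{b\ast b'}=\tilde\mu_b\tilde\mu_{b'}$ by associativity of $\ast$), the claim gives $\tilde\mu_\uc\in\Phi(\Fs^+)$. Adjoints then give $\tilde\kappa_\uc$.

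\emph{Step 2: injectivity.} I would use the Reedy/cellular basis $C^K_{FG}=F\cdot\iota(G)$ from Theorem~\ref{thm-cell}. Take a nonzero $x=\sum c_{K,F,G}C^K_{FG}$ and choose $K$ maximal with some $c_{K,F,G}\neq0$. By Step 1, $\Phi$ restricted to $(H)\Fs^+(K)$ is injective into $\B^H_K$. Evaluate $\Phi(x)$ on $\Bt_K$ at the element $e_\varnothing\in\B^K_K$; the target is $\bigoplus_H\B^H_K$. A term $C^{K'}_{FG}$ survives only if $\iota(G)$ descends to $K'$ inside $\Bt_K$, which requires $K'\supseteq K$. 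By maximality of $K$, only terms with $K'=K$ and $G=(K)$ contribute, giving $\sum_F c_{K,F,(K)}\Phi(F)e_\varnothing$. This is nonzero if some $c_{K,F,(K)}\neq0$.

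In general I would instead evaluate on $\Phi(G')e_\varnothing$ for suitable $G'$ and use nondegeneracy over $\mathbb Q$ of the pairing $\langle\Phi(G)e_\varnothing,\Phi(G')e_\varnothing\rangle$. That pairing is the Brylawski--Varchenko form, whose determinant is nonzero. This gives injectivity after tensoring with $\mathbb Q$, hence injectivity over $\Z$ since $S(M)$ is $\Z$-free.

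The main obstacle is the identification $(G)\Fs^+(F)\cong\cUc^G_F$ in Step 1, in particular that the image spans all of $\cUc$ over $\Z$ and not just a finite-index sublattice. My first attempt would be a direct inductive argument on rank. An element $u\in\ker\delta$ of $\B^G_F$ decomposes according to the coatom containing all but one basis element; I would check that each piece is $\ast$-divisible by some $d^G_{K}$ with a kernel element in $\cUc^K_F$, and then induct.
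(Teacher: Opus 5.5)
Your proposal is correct and follows the paper's proof. It uses the same homomorphism $\Phi$ and the same key lemma, that the products $d(K_\bullet)$ over flags span $\cUc^G_F$; the paper proves that lemma exactly by your rank count plus unitriangularity, taking for each nbc basis $x_1>\cdots>x_r$ of $M^G_F$ the flag $K_i=\overline{F\cup\{x_1,\ldots,x_i\}}$, where the broken-circuit condition forces $x_i=\min(K_i\setminus K_{i-1})$, so your fallback induction on rank is unnecessary. Injectivity is proved by the same leading-term argument on the maximal middle flat, the only variation being that the paper evaluates on all of $\B^J_K$, i.e.\ through $\cUc^H_K\otimes\cUc^J_K\hookrightarrow\B^H_K\otimes\B^J_K\cong\Hom(\B^J_K,\B^H_K)$, which works directly over $\Z$ and avoids your appeal to the Brylawski--Varchenko determinant (for your version, positive-definiteness of the monomial inner product on $\cUc^J_K$ would already suffice).
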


To prove Theorem~\ref{thm-quiver}, describing $\Rc(M)$ in terms of $S(M)$, it will be helpful to use the following Reedy structure for $\Rc(M)$ shown in~\cite{BMmatroid}.
If $F,G \in \F$ are comparable cyclic flats, let $\Uc_{GF} \subset \End(\B)$ be the subspace
\[ \Uc_{GF} = \begin{cases} \{\mu_\uc~|~\uc \in \cUc^G_F\} & \mathrm{if~} F \subseteq G,~\mathrm{and} \\
 \{\kappa_\uc~|~\uc \in \cUc^F_G \} & \mathrm{if~} F \supseteq G.
\end{cases} \]

\begin{proposition}\cite[Corollary 1]{BMmatroid}\label{thm:cellular}
There is a direct sum decomposition
\[ \Rc(M) = \bigoplus_{F' \supseteq G\subseteq F} \Uc_{F'G} \Uc_{GF}, \]
where $E,F,G$ run over the set of cyclic flats of $M$.
\end{proposition}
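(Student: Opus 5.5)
The plan is to prove the decomposition in two halves: first that the subspaces $\Uc_{F'G}\Uc_{GF}$ span $\Rc(M)$, and then that their sum is direct. The argument copies the Reedy argument for $S(M)$ in Section~\ref{sec-reedy}, working inside $\End(\B)$ instead of with a presentation.

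\textbf{Spanning.} First I will show that each family of generators is closed under composition:
\[
\mu_u\circ\mu_v=\mu_{u\ast v}, \qquad \kappa_v\circ\kappa_u=\kappa_{u\ast v}.
\]
This needs $\cUc^H_G\ast\cUc^G_F\subseteq\cUc^H_F$. I would check it by writing $\delta$ on $\Lambda(M^H_F)$ as the sum of its contributions from $H\setminus G$ and from $G\setminus F$, and using that each piece kills the corresponding factor in the relevant top-rank component.

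The central step, which I expect to be the main obstacle, is a \emph{straightening formula}. Take $u\in\cUc^F_G$ and $v\in\cUc^{F}_{G'}$. I want to express $\kappa_u\mu_v$ (a path going up through $F$ and then down) as a sum of operators of the form $\mu_{u'}\kappa_{v'}$ whose middle cyclic flat lies below $G$ and $G'$. This mirrors the up--down relation ($\bigwedge$). My first attempt would be to compute matrix coefficients directly on monomials $e_S$. For $S$ a basis of the relevant interval, $\kappa_u\mu_v(e_S)$ is obtained by pairing $u$ against the part of $v\wedge e_S$ lying over $G$. I would split $v\wedge e_S$ according to how the basis meets $G\wedge G'$ (the cyclic meet), and then use the kernel condition $\delta u=0$ to rewrite the result through the interval ending at $G\wedge G'$.

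If this direct computation gets out of hand, I would fall back on the isomorphism of Theorem~\ref{thm-isom}. The relations (FS) and ($\bigwedge$) hold for the $\tilde\mu,\tilde\kappa$ operators, and restricting to the invariant subspace $\B$ transfers them. The remaining issue is to show that the terms passing through non-cyclic flats can be re-expressed through cyclic ones after restriction.

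\textbf{Directness.} Order the cyclic flats and argue by downward induction on the middle flat $G$. For $u\in\cUc^{F'}_G$ and $v\in\cUc^F_G$, the operator $\mu_u\kappa_v$ kills every summand $\B^K_L$ with $L\not\subseteq G$. Restricted to the summand $\B^F_G$, it is the rank-one map $x\mapsto\langle v,x\rangle\,u$, because $\kappa_v$ sends $\B^F_G$ to $\B^G_G=\Z e_\varnothing$.

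Now suppose $\sum_G X_G=0$ with $X_G\in\bigoplus_{F,F'}\Uc_{F'G}\Uc_{GF}$, and choose $G$ maximal with $X_G\neq0$. Restrict the identity to the summands $\B^F_G$. The terms with middle flat $G'\neq G$ either vanish there (when $G\not\subseteq G'$) or have $G'\supsetneq G$, and those are already zero by maximality. So $X_G$ restricts to zero on each $\B^F_G$. Since $X_G$ acts there as $\sum u_i\otimes v_i^*$, this forces $\sum u_i\otimes v_i=0$ in $\cUc^{F'}_G\otimes\cUc^F_G$, provided the pairing $\langle\,,\rangle$ restricted to $\cUc^F_G$ is nondegenerate. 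Nondegeneracy holds after tensoring with $\mathbb Q$, since the form is positive definite. Integrally, independence also holds because the $\cUc^F_G$ are saturated sublattices of lattices with orthonormal bases.

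Hence $X_G=0$, which contradicts the choice of $G$, so the sum is direct.
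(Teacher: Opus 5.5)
The paper does not prove this proposition; it imports it from \cite[Corollary 1]{BMmatroid}. Your directness half is correct. It is the same argument the paper uses in Section~\ref{sec-faithful} to show $\phi$ is injective: take $G$ maximal and restrict to the summands $\B^F_G$, where $\mu_u\kappa_v$ acts by $x\mapsto\langle v,x\rangle u$. You do not need nondegeneracy on $\cUc^F_G$ or saturation. Since $x$ runs over all of $\B^F_G$, whose form is unimodular, the map $\cUc^{F'}_G\otimes\cUc^F_G\to\Hom(\B^F_G,\B^{F'}_G)$ is injective by flatness alone. Your check that $\cUc^H_G\ast\cUc^G_F\subseteq\cUc^H_F$ is also fine.

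The gap is in spanning. Straightening $\kappa_u\mu_v$ is the whole content of that half, and neither of your routes proves it. The direct computation describes what you would try but extracts no identity from $\delta u=0$. The fallback stops exactly at what you call the remaining issue. Theorem~\ref{thm-isom} together with the Reedy decomposition of Section~\ref{sec-reedy} only writes $\tilde\kappa_u\tilde\mu_v$ as a combination of terms $\phi(F^+F^-)=\tilde\mu_a\tilde\kappa_b$, with $a\in\cUc^J_L$ and $b\in\cUc^K_L$, whose middle flats $L$ need not be cyclic. Also, $\B$ is not invariant under the operators attached to non-cyclic flats, so relations can only be restricted to $\B$ for elements of $eS(M)e$.

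The missing idea is the coloop-removal identity from the Lemma in Section~\ref{sec-Rcheck}.
\begin{itemize}
\item Let $L'$ be the largest cyclic flat in $L$, reached by deleting the coloops of $M^L$ one at a time. This gives a flag $H$ from $L'$ to $L$ whose steps $L_i\lessdot L_{i+1}$ each add a single element $j$.
\item Each such $j$ lies in every basis of $M^{L_{i+1}}_{G''}$ for $G''\subseteq L_i$, so $\tilde\mu_j$ and $\tilde\kappa_j$ are mutually inverse there.
\item Every cyclic $G''\subseteq L$ lies in $L'$. Hence $\tilde\mu_{d(H)}\tilde\kappa_{d(H)}$ is the identity on $\B^L_{G''}$ for all such $G''$.
\item Inserting this identity gives $\phi(F^+F^-)|_{\B}=\mu_{a\ast d(H)}\kappa_{b\ast d(H)}\in\Uc_{JL'}\Uc_{L'K}$, using your closure of $\cUc$ under $\ast$.
\end{itemize}
By Lemma~\ref{lem-surj}, $\Rc(M)\subseteq\phi'(eS(M)e)$, and $eS(M)e$ is spanned by such $F^+F^-$ with $J,K$ cyclic. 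So this gives spanning outright, with no generator-by-generator straightening. Without this step, or an actual proof of your straightening formula, the first half of the proposition is not established.
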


Finally, we state our theorem relating the extended matroidal Schur algebra $S(M)$ to the original matroidal Schur algebras $\Rc(M)$ and $R(M)$.  

\begin{theorem}\label{thm-quiver}
Let $e = \sum_{K \in \F} (K)$ denote the sum of all such idempotents corresponding to cyclic flats.  The matroidal Schur algebra $\widecheck{R}(M) (\cong R(M^*))$ is isomorphic to 
\[\widecheck{R}(M) \cong eS(M) e/(\bigvee),\] 
where ($\bigvee$) is the relation:
\begin{itemize}
\item[($\bigvee$)] Let $K$ be a flat with a coloop $j \in K$ and let $L \lessdot K$ be the flat $K \setminus j$ obtained by removing $j$.  Then
\[ (K \gtrdot L \lessdot K) = (K).\]
\end{itemize}
\end{theorem}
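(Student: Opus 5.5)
The plan is to construct a surjection $eS(M)e \to \Rc(M)$ that kills ($\bigvee$), and then show injectivity of the induced map by comparing the Reedy decompositions on both sides. We take Theorem~\ref{thm-isom} as given: there is a faithful action of $S(M)$ on $\Bt$ under which a flag path acts by the composite of the operators $\phi$ of Section~\ref{sec-tilting}, i.e.\ $\tilde\mu_{d}$ and $\tilde\kappa_{d}$ for $d = d^L_K$. Restricting to $e S(M) e$ gives operators on $e\Bt e$. This space is not $\B$, because $\B$ only uses intervals $\B^G_F$ with \emph{both} $F,G$ cyclic.

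\textbf{Step 1 (the map).} We define $\Psi: eS(M)e \to \End(\B)$ by letting $eS(M)e$ act on $\bigoplus_{F\in\F}\Bt_F$ and then projecting onto the summands with cyclic upper index. Equivalently, let $x\in (K)S(M)(J)$ with $K,J$ cyclic act on $\B^J_F$, landing in $\B^K_F$. We must check two things.
\begin{enumerate}
\item The image is generated by, and contains, the $\mu_\uc$ and $\kappa_\uc$. By the Reedy decomposition, $(K)\Fs^+(J)$ for cyclic $J\subseteq K$ acts through $\tilde\mu$ of elements of $\B^K_J$ obtained by wedging the $d$'s along a flag. By the flag space relations together with the Orlik--Solomon/flag space duality (Brylawski--Varchenko), these span exactly $\cUc^K_J$. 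Dually, $\Fs^-$ gives the $\kappa_\uc$. Surjectivity then follows from the generators of $\Rc(M)$.
\item Relation ($\bigvee$) holds in the image. Since $K\setminus L=\{j\}$ with $j$ a coloop, $\phi(K\gtrdot L\lessdot K)$ sends $e_B\in\B^K_F$ to $e_B$ if $j\in B$ and to $0$ otherwise. On $\B^K_F$ with $F$ cyclic, every basis of $M^K_F$ contains the coloop $j$, because $j\notin F$ and $j$ remains a coloop in the contraction. So the operator is the identity there. This is exactly why we restrict to cyclic lower index $F$, and it is also why $\Psi$ kills ($\bigvee$) while $S(M)$ itself does not.
\end{enumerate}

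\textbf{Step 2 (injectivity).} This is the main obstacle. The Reedy decomposition of $S(M)$ gives
\[eS(M)e=\bigoplus_{K,J\in\F,\ L\in\Lc}(K)\Fs^+(L)\otimes(L)\Fs^-(J),\]
where the middle flat $L$ may be non-cyclic. By Proposition~\ref{thm:cellular}, the target $\Rc(M)$ has the same form but with $L$ cyclic, and with $(K)\Fs^+(L)\cong\cUc^K_L$ for $L$ cyclic. So the work is to show that, modulo ($\bigvee$), every term with non-cyclic $L$ can be rewritten in terms of cyclic middle flats. Let $\bar L$ be the largest cyclic flat contained in $L$, i.e.\ $L$ minus its coloops. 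Pick a coloop $j$ of $L$ and insert $(L)=(L\gtrdot L\setminus j\lessdot L)$ at the middle. Iterating pushes the middle down to $\bar L$.

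We must then check that the resulting surjection
\[\bigoplus_{L\in\F}(K)\Fs^+(L)\otimes(L)\Fs^-(J)\onto (K)\bigl(eS(M)e/(\bigvee)\bigr)(J)\]
composed with $\Psi$ is exactly the isomorphism of Proposition~\ref{thm:cellular}. This is a rank comparison. We need $(K)\Fs^+(L)\cong\cUc^K_L$ to be an isomorphism of free $\Z$-modules for cyclic $K,L$, with ranks $\mu^+(M^K_L)$ computed in Section~\ref{sec-modules}, and these must be compatible with the $\mu/\kappa$ description. Given this, the composite is a surjection of free modules of equal rank onto a direct sum, hence an isomorphism, and injectivity follows.

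The delicate point is well-definedness of the middle-pushing: different choices of coloops, or interactions with the (FS) relations at non-cyclic flats, must not create new relations. I would handle this by never proving normal forms directly in the quotient. Instead, I would use the spanning argument only for surjectivity and let faithfulness of $\Psi$ on the smaller spanning set, via Proposition~\ref{thm:cellular}, do the rest.

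Finally, the identification $\Rc(M)\cong R(M^*)$ is quoted from \cite{BMmatroid}.
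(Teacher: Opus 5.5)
Your proposal is correct and follows essentially the same route as the paper. You restrict the faithful action $\phi$ of $eS(M)e$ to $\B$, and use the coloop computation (the paper phrases it as $\tilde\mu_{d}$ and $\tilde\kappa_{d}$ being mutually inverse on $\B^{L_{i+1}}_G\leftrightarrow\B^{L_i}_G$ for cyclic $G$, while you explicitly check that ($\bigvee$) lies in the kernel, which the paper leaves implicit) to push non-cyclic middle flats down to cyclic ones; this gives a surjection from $\bigoplus_{L\in\F}(K)\Fs^+(L)\otimes(L)\Fs^-(J)$ onto $eS(M)e/(\bigvee)$, and, exactly as in the paper, you conclude by the rank comparison with Proposition~\ref{thm:cellular}.
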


\begin{example} Applying the theorem to $PG(n-1,q)$ and $M(K_n)$ we find:
\begin{enumerate} \item
The algebra $\Rc(PG(n-1,q))$ is isomorphic to \[eS(PG(n-1,q))e/(\bigvee),\] where $e$ is the idempotent defined as the sum of length one paths $(V)$ over all subspaces such that $\dim V \neq 1$ and $(\bigvee)$ is the relation:
\[ (V \gtrdot \{0\} \lessdot V) = (V), \]
for all 1-dimensional subspaces $V \subseteq \FM_q^n$.

\item
The algebra $\Rc(M(K_n))$ is isomorphic to $eS(M(K_n))e/(\bigvee)$, where $e$ is the idempotent defined as the sum of length one paths $(\sigma)$ over all partitions with no blocks of size two and $(\bigvee)$ stands for the ideal generated by the following relation:

Suppose that $\sigma \leq \pi$ and $\pi$ is obtained from $\sigma$ by merging two blocks $B=\{i\}, B'=\{j\}$ that each contain only one element.  Then:
\[ (\pi,\sigma,\pi) = (\pi). \]
Diagrammatically:
\[\begin{tikzpicture}[scale=0.75,baseline = -3]
	\draw[blue,-,ultra thick] (.5,0) to (0.035,-.5);
	\draw[blue,-,ultra thick] (.5,0) to (0.035,.5);
	\draw[red,-,ultra thick] (-.5,0) to (-.035,-.5);
	\draw[red,-,ultra thick] (-.5,0) to (-.035,.5);
	\draw[red,-,ultra thick] (-.035,.5) to (-.035,1);
	\draw[red,-,ultra thick] (-.035,-.5) to (-.035,-1);
	\draw[blue,-,ultra thick] (0.035,.5) to (0.035,1);
	\draw[blue,-,ultra thick] (0.035,-.5) to (0.035,-1);
	\draw[densely dotted] (-.8,-1) -- (.8,-1);
	\draw[densely dotted] (-.8,0) -- (.8,0);
	\draw[densely dotted] (-.8,1) -- (.8,1);
	\node at (-1,-0.2) {$\{i\}$};
        \node at (1,-0.2) {$\{j\}$};
	\node at (0,-1.3) {$\{i,j\}$};
\end{tikzpicture}
=
\begin{tikzpicture}[scale=0.75,baseline = -3]
	\draw[blue,-,ultra thick] (.035,1) to (0.035,-1);
	\draw[red,-,ultra thick] (-.035,1) to (-.035,-1);

	\draw[densely dotted] (-.8,-1) -- (.8,-1);
	\draw[densely dotted] (-.8,0) -- (.8,0);
	\draw[densely dotted] (-.8,1) -- (.8,1);
	\node at (0,-1.3) {$\{i,j\}$};
\end{tikzpicture}
\]
\end{enumerate}
\end{example}

%%%%%%%%%%%%%%%%%%%
\section{Faithful representation of $S(M)$}
\label{sec-faithful}
%%%%%%%%%%%%%%%%%%%

Our aim in this section is to prove Theorem~\ref{thm-isom}.  For each $F \in \Lc$, we defined an action of $S(M)$ on $\Bt_F$  in Section~\ref{sec-tilting}.  Consider the resulting $S(M)$-module structure on $\Bt = \bigoplus_{F \in \Lc} \Bt_F$ obtained by direct sum.  In the previous section $\phi$ denoted the action of $S(M)$ on $\Bt_F$ for a fixed flat $F$.  In this section, by a slight abuse of notation, we let $\phi: S(M) \to \End(\Bt)$ denote action on the direct sum $\Bt$. 

Let $\tilde\Rc(M) \subseteq \End(\Bt)$ denote the subalgebra generated by the elements $\tilde\mu_\uc$ and $\tilde\kappa_\uc$ for all $\uc \in \tilde\cUc$.

To prove Theorem~\ref{thm-isom} we must show that the image $\mathrm{Im}(\phi)$ is equal to $\tilde\Rc(M)$ and that $\phi$ is injective.

Note that for any flat $G$, $\phi(G) = \tilde\mu_{e_{\varnothing,G}}$ and $e_{\varnothing,G} \in \cUc_G^G$. Also for any $K,L \in \Lc$ such that $K \gtrdot L$,
\[ \phi(K \gtrdot L) = \tilde\mu_{d_L^K}, \quad \quad \phi(L \lessdot K) = \tilde\kappa_{d_L^K},\]
 and $d_L^K := \sum_{s \in K \setminus L} s \in \cUc_L^K$.\footnote{Here $\delta(d^K_L ) = d^K_L  \wedge d^K_L  = 0$, so $d^K_L$ is indeed in $\Ker(\delta) = \cUc^K_L$.}  Thus $\phi(G), \phi(K \gtrdot L), \phi(L \lessdot K) \in \tilde\Rc(M)$ and as the algebra $S(M)$ is generated by the idempotents $(G)$ for all $G \in \Lc$ and elements $(K\gtrdot L)$ and $(L \lessdot K)$ for all $K,L \in\Lc$ such that $K\lessdot L$ or $K\gtrdot L$, we conclude that $\mathrm{Im}(\phi) \subseteq \tilde\Rc(M)$.

\begin{lemma}\label{lem-surj}
The reverse inclusion, $\tilde\Rc(M) \subseteq\mathrm{Im}(\phi)$, also holds.
\end{lemma}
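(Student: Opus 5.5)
The plan is to show that each generator $\tilde\mu_\uc$ and $\tilde\kappa_\uc$, for $\uc \in \cUc^G_F$ with $F \subseteq G$, lies in $\mathrm{Im}(\phi)$. Since $\tilde\Rc(M)$ is generated by these elements and $\mathrm{Im}(\phi)$ is a subalgebra, this suffices.

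\textbf{Reduction to $\tilde\mu$.} By construction, $\phi(L \lessdot K)$ is the adjoint of $\phi(K \gtrdot L)$. Hence $\phi(\iota(x))$ is the adjoint of $\phi(x)$ for every $x\in S(M)$, because $\iota$ reverses paths and adjoints reverse compositions. Moreover, $b\mapsto \tilde\kappa_b$ is the adjoint of $b\mapsto\tilde\mu_b$. So once $\tilde\mu_\uc=\phi(x)$, we get $\tilde\kappa_\uc=\phi(\iota x)$, and it is enough to treat $\tilde\mu_\uc$.

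\textbf{Flags give $\tilde\mu$ of wedge products.} Associativity of $\ast$ gives $\tilde\mu_b\circ\tilde\mu_c=\tilde\mu_{b\ast c}$. Combined with $\phi(K\gtrdot L)=\tilde\mu_{d^K_L}$, this shows that for a flag $G=K_m\gtrdot K_{m-1}\gtrdot\cdots\gtrdot K_0=F$
\[ \phi(K_m\gtrdot\cdots\gtrdot K_0)=\tilde\mu_{d(K_\bullet)},\qquad d(K_\bullet):=d^{K_m}_{K_{m-1}}\wedge\cdots\wedge d^{K_1}_{K_0}\in\B^G_F. \]
Each $d(K_\bullet)$ lies in $\cUc^G_F$, since $\delta$ acts on $\B^G_F$ through elements of $G\setminus F$ and the $d$'s sum to $\sum_{s\in G\setminus F}s$. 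By linearity of $b\mapsto\tilde\mu_b$, the lemma therefore reduces to the following claim.

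\textbf{Claim.} The elements $d(K_\bullet)$, over all full flags from $F$ to $G$, span $\cUc^G_F=\cUc(M^G_F)$ over $\Z$.

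Passing to the contraction-restriction, we may assume $F$ is the minimal flat and $G=E$. In other words, the image of the flag-space map $(E)\Fs^+(F)\to\cUc(M)$ should be all of $\cUc(M)$. The map is well defined on $\Fs^+$ because (FS) holds in $\End(\Bt)$ by the earlier Proposition on $\Bt_F$.

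\textbf{Proving the claim.} I would argue in three steps.
\begin{enumerate}
\item \emph{Ranks.} $(E)\Fs^+(F)$ has rank $\mu^+(M)$ by \cite[Proposition 2.18]{BV}. The kernel of $\delta:\Lambda^r_r\to\Lambda^{r+1}_r$ also has rank $\mu^+(M)$: it is dual to the top Orlik--Solomon component $\Lambda^r_r/\mathrm{im}$, as used in \cite{BMmatroid}.
\item \emph{Saturation.} $\cUc(M)$ is a kernel, hence a saturated sublattice of $\B(M)$. So it suffices to show that the images $d(K_\bullet)$ of the neat flags span a saturated sublattice of rank $\mu^+(M)$.
\item \emph{Unitriangularity.} Fix the labelling $\ell$ and order $E$ accordingly. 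For a neat flag, $d(K_\bullet)$ contains the monomial $\pm e_{\{\ell(K_1),\dots,\ell(K_m)\}}$ with coefficient $\pm1$. I would then show these label sets are distinct bases (nbc-type bases) and that the matrix of coefficients of the neat-flag images against these monomials is unitriangular for a suitable order.
\end{enumerate}

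The main obstacle is the unitriangularity in step 3. Its integrality is exactly what is needed: over $\mathbb Q$ the claim follows from the rank count alone. If the direct triangularity argument is awkward, I would instead dualize. That means pairing the $d(K_\bullet)$ with the Orlik--Solomon nbc basis and invoking Brylawski--Varchenko's statement that flag space is the integral dual of the top Orlik--Solomon group, which gives the spanning over $\Z$ directly.
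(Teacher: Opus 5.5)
Your proposal is correct and follows essentially the paper's route. The paper combines the rank count (rank of $\cUc^G_F$ equals the number of nbc bases) with a unitriangularity argument: the flags $K_i=\overline{\{x_1,\dots,x_i\}}$ attached to nbc bases $x_1>\dots>x_r$, which are exactly your neat flags for $\ell=\min$, give elements $d(K_\bullet)$ that are unitriangular against the nbc monomials, and your saturation argument does the job of the paper's orthogonal projection onto $\ul{\B}^G_F$. The step you flag as the main obstacle is immediate from neatness: $x_i=\ell(K_i)=\min(K_i\setminus K_{i-1})$, so every monomial $e_{\{y_1,\dots,y_r\}}$ of $d(K_\bullet)$ (with $y_i\in K_i\setminus K_{i-1}$) has $y_i\ge x_i$ for all $i$, with equality only for the leading term.
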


\begin{proof}
It suffices to show that for any $\uc \in \cUc_G^H$, $\tilde\mu_\uc, \tilde\kappa_\uc \in \mathrm{Im}(\phi)$.

For a flag $F = (K_k \gtrdot K_{k-1} \gtrdot \ldots \gtrdot K_0)$, observe that $\phi(F) = \tilde\mu_{d(F)}$ and $\phi(\iota F) = \tilde \kappa_{d(F)}$, where
\[ d(F) = d^{K_1}_{K_0} \wedge d^{K_2}_{K_1} \wedge \ldots \wedge d^{K_{k-1}}_{K_k}. \]

Let $\Fl^G_H$ denote the set of flags $F = (H = K_k \gtrdot K_{k-1} \gtrdot \ldots \gtrdot K_0 = G)$.  It is enough to show that $\cUc^G_H$ is equal to the span of 
\[ \{d(F)~|~F \in \Fl^H_G\} .\]
For each $F = (H=K_k \gtrdot K_{k-1} \gtrdot \ldots \gtrdot K_0=G)\in \Fl^H_G$, we have $d_G^H:= \sum_{s \in H \setminus G} s = \sum_{i=1}^{k-1} d^{K_i}_{K_{i+1}} $ and so:
\[ \delta(d(F)) = d_G^H \wedge d(F) = \left( \sum_{i=1}^{k} d^{K_{i}}_{K_{i-1}} \right) \wedge d^{K_0}_{K_1} \wedge \ldots \wedge d^{K_{k-1}}_{K_k}=0, \] so $d(F) \in \cUc_G^H$ for all $F \in \Fl_G^H$.  

It remains the show that $\{d(F)~|~F \in \Fl_G^H\}$ spans $\cUc_G^H$.  
Let $\ul{\B}_G^H \subseteq \B_G^H$ denote the free $\Z$-submodule generated by the set of monomials $e_B$ for the nbc bases $B$ of $M_G^H$.    
Recall that the rank of the free $\Z$-module $\cUc_G^H$ is equal to the number of nbc bases of $M_G^H$ (see \cite[Lemma 6]{BMmatroid}).  To show that $\{d(F)~|~F \in \Fl^G_H\}$ spans $\cUc_G^H$ it is therefore enough to construct a subset of $\{d(F)~|~F \in \Fl^G_H\} \subset \cUc_G^H$ that is sent to a basis for $\ul{\B}_G^H$ by the orthogonal projection $\B_G^H = \ul\B_G^H \oplus (\ul\B_G^H)^\perp \to \ul\B_G^H$.

For each nbc basis $B = \{x_1>x_2> \ldots >x_r\}$ for $M_G^H$, let $F = (K_k \gtrdot K_{k-1} \gtrdot \ldots \gtrdot K_1 \gtrdot K_0=G)$ be the flag with $K_i = \overline{\{x_1,\ldots,x_i\}}$.  Consider the wedge product $d_F = d^{K_0}_{K_1} \wedge d^{K_1}_{K_2} \wedge  \ldots \wedge d^{K_{r-1}}_{K_r} \in \cUc(M)$.  Suppose that there exists $y_i \in K_i \setminus K_{i-1}$ such that $y_i <x_i$.  Then $\{x_1, x_2, \ldots, x_i, y_i\}$ is dependent and contains a circuit $C$ with minimal element $y_i$, which implies that $\{x_1, x_2, \ldots, x_i\}$ contains a broken circuit $C \setminus y_i$.  This contradicts the assumption that $B$ is nbc, so we conclude that for each $i$, $x_i$ is the minimal element of $K_i \setminus K_{i-1}$.  

Thus $d(F) \in \cUc$ is of the form $d(F) = \pm e_B + \sum_{B'>B} a_{B'} e_{B'}$ for some coefficients $a_{B'} \in \Z$ and by induction we conclude that $e_B \in \ul{\B}^\perp + \cUc$.  Letting $B$ run over all nbc bases we have then constructed a sequence of wedge products $d_F$ that project to a basis for $\ul\B$.  We conclude that this set of wedge products is a basis of $\cUc(M)$ and so $\phi$ is surjective.
\end{proof}

\begin{corollary}\label{cor-isom}
The map from $\Z \cdot \Fl_G^H$ to $\cUc_G^H$ defined on flags by
\[F = (H = K_k  \gtrdot \ldots \gtrdot K_0 = G) \to d(F) = d^{K_0}_{K_1} \wedge d^{K_1}_{K_2} \wedge \ldots \wedge d^{K_{k-1}}_{K_k} \in \cUc^G_H,\] 
induces an isomorphism $(H) \Fs^+ (G) \to \tilde\Uc_{HG}:= \{\tilde\mu_\uc~|~\uc \in \cUc^H_G\}$, which is the restriction of $\phi$ to $(H) \Fs^+ (G) \subset S(M)$.
\end{corollary}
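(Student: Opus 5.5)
The statement has three parts: the map $F \mapsto d(F)$ descends to $(H)\Fs^+(G)$; the induced map onto $\cUc^H_G$ (hence onto $\tilde\Uc_{HG}$) is a bijection; and it agrees with $\phi$. I plan to reduce everything to a rank comparison plus surjectivity, both essentially available already. I will first record the compatibility with $\phi$. For a flag $F \in \Fl^H_G$, the computation at the start of the proof of Lemma~\ref{lem-surj} gives $\phi(F) = \tilde\mu_{d(F)}$. The assignment $\uc \mapsto \tilde\mu_\uc$ from $\cUc^H_G$ to $\End(\Bt)$ is injective, because $\tilde\mu_\uc(e_{\varnothing,G}) = \uc$. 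So the map $\Z\cdot\Fl^H_G \to \cUc^H_G$ is the composite of $\phi$ restricted to $(H)\FlP(M)(G)$ with the identification $\tilde\Uc_{HG}\cong \cUc^H_G$.

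Well-definedness on the flag space quotient then comes for free. The flag space relations hold in $S(M)$, and $\phi$ is an $S(M)$-module action (Section~\ref{sec-tilting}), so $\phi$ kills (FS). Hence $d$ factors through $(H)\Fs^+(G)$, the image of increasing flags from $G$ to $H$ in $S(M)$. By the Reedy decomposition this image is the flag space of $M^H_G$ modulo (FS). Concretely, one can also see directly that $\sum_K d^{L}_{K}\wedge d^{K}_{J}=0$, as in the proof of the (FS) relation for $\Bt_F$.

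Surjectivity onto $\cUc^H_G$ is exactly what the proof of Lemma~\ref{lem-surj} establishes: for each nbc basis $B$ of $M^H_G$ it produces a flag $F_B$ with $d(F_B) = \pm e_B + \sum_{B'>B} a_{B'}e_{B'}$. Unitriangularity then shows that these $d(F_B)$ span a saturated submodule of rank equal to the number of nbc bases, which is $\operatorname{rank}\cUc^H_G$ by \cite[Lemma 6]{BMmatroid}. Hence they form a basis, and the map is onto.

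Injectivity is the main step. By \cite[Proposition 2.18]{BV}, $(H)\Fs^+(G)$ is a free $\Z$-module of rank $\mu^+(M^H_G)$. The number of nbc bases of $M^H_G$ is also $\mu^+(M^H_G)$; this is the standard Björner/Rota count, equivalently the fact that the Orlik--Solomon algebra in top degree has rank $|\mu|$. So we have a surjection between free $\Z$-modules of equal finite rank. Tensoring with $\mathbb{Q}$ gives an isomorphism, so the kernel is torsion; since the source is torsion-free, the kernel is zero. The one point to double check is that \cite[Proposition 2.18]{BV} indeed gives freeness over $\Z$ and not just over a field. If it does not, I would instead use the Brylawski--Varchenko neat-flag basis \cite[(2.12)--(2.13)]{BV}. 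Choosing the labelling $\ell(K)=\min(K\setminus G)$, the neat flags correspond bijectively to nbc bases via the flags $F_B$ above, and the triangularity shows that $d$ carries the neat basis to a basis of $\cUc^H_G$.
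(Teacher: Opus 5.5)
Your proposal is correct and takes essentially the same route as the paper. You identify $\tilde\Uc_{HG}$ with $\cUc^H_G$ via evaluation at $e_{\varnothing,G}$, get well-definedness from the $S(M)$-module structure of Section~\ref{sec-tilting}, get surjectivity from the nbc-triangularity argument in the proof of Lemma~\ref{lem-surj}, and get injectivity because both free $\Z$-modules have rank $\mu^+(M^H_G)$ (via \cite[Lemma 6]{BMmatroid} and \cite[Proposition (2.18)]{BV}) — exactly the paper's argument, with your neat-flag fallback being only an extra safeguard for freeness over $\Z$.
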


\begin{proof}
The map $\uc \mapsto \tilde\mu_\uc$ is an isomorphism from $\cUc_G^H$ to $\Uc_{HG}$, with inverse $x \mapsto x(e_{\varnothing,G})$.  The result of Section~\ref{sec-tilting} implies that the map $F \mapsto d(F)$ descends to the restriction of $\phi$.  The proof of the previous result shows that the restriction of $\phi$, $(H) \Fs^+ (G) \to \tilde\Uc_{HG}$, is surjective.  A surjective homomorphism of free abelian groups of the same rank is an isomorphism, so to complete the argument we need to show that the flag space $(H) \Fs^+ (G)$ is a free $\Z$-module of the same rank as $\cUc_G^H$. Indeed, this is true and both have rank $\mu^+(M^H_G)$ by \cite[Lemma 6]{BMmatroid}) and~\cite[Proposition (2.18)]{BV}.
\end{proof}

\begin{proposition}
$\phi$ is injective.
\end{proposition}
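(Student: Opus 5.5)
The plan is to combine the Reedy decomposition of Section~\ref{sec-reedy} with Corollary~\ref{cor-isom}, and to detect each summand of the decomposition on a suitable summand $\Bt_L$ of $\Bt$. Since $\phi$ respects the idempotents $(K)$, it suffices to fix flats $K,J$ and show that $\phi$ is injective on $(K)S(M)(J)$. By Section~\ref{sec-reedy}, multiplication gives an isomorphism
\[
\bigoplus_{L \subseteq K, J} (K)\Fs^+(L) \otimes (L)\Fs^-(J) \cong (K)S(M)(J).
\]
So an arbitrary element can be written as $x = \sum_L x_L$ with $x_L \in (K)\Fs^+(L)\otimes(L)\Fs^-(J)$. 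Assume $\phi(x)=0$ and $x\neq 0$. Choose $L$ maximal with respect to inclusion among the flats with $x_L \neq 0$. We will reach a contradiction by restricting $\phi(x)$ to $\Bt_L$.

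\emph{Step 1 (the other summands vanish on $\Bt_L$).} Take a summand $u\otimes v$ of $x_{L'}$ with $L'\neq L$. By Corollary~\ref{cor-isom}, it acts through $\phi$ as $\tilde\mu_{\uc}\tilde\kappa_{\vc}$ with $\uc\in\cUc^K_{L'}$ and $\vc\in\cUc^J_{L'}$. The operator $\tilde\kappa_{\vc}$ maps $\B^J_L$ into $\B^{L'}_L$. By maximality of $L$ we have $L\not\subseteq L'$, so $\B^{L'}_L=0$. Hence only $x_L$ contributes to the restriction of $\phi(x)$ to $\Bt_L$.

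\emph{Step 2 (compute the surviving term).} For $\vc\in\cUc^J_L\subseteq\B^J_L$ and a basis element $e_{B,L}\in\B^J_L$, adjointness gives
\[
\tilde\kappa_{\vc}(e_{B,L}) = \langle \vc, e_B\rangle\, e_{\varnothing,L},
\]
since $\tilde\mu_{\vc}(e_{\varnothing,L}) = \vc$. Then $\tilde\mu_{\uc}$ sends $e_{\varnothing,L}$ to $\uc$. Therefore, writing $x_L=\sum_i u_i\otimes v_i$ and letting $\uc_i,\vc_i$ be the images of $u_i,v_i$ under the isomorphisms of Corollary~\ref{cor-isom}, the restriction $\B^J_L\to\B^K_L$ of $\phi(x)$ is the map
\[
y\mapsto \sum_i \langle \vc_i, y\rangle\, \uc_i .
\]
For $(L)\Fs^-(J)$, the map $v\mapsto \vc$ is defined by applying $\iota$ and then Corollary~\ref{cor-isom}. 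The corollary makes $u\mapsto\uc$ and $v\mapsto\vc$ injective maps into the free modules $\B^K_L$ and $\B^J_L$.

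\emph{Step 3 (the main point: the tensor is determined by the operator).} The form $\langle~,~\rangle$ on $\B^J_L$ is unimodular, since the monomials are orthonormal. Hence the map $\B^K_L\otimes\B^J_L\to\Hom(\B^J_L,\B^K_L)$, $\uc\otimes\vc\mapsto \langle\vc,-\rangle\uc$, is an isomorphism. Precomposing with the injection
\[
(K)\Fs^+(L)\otimes(L)\Fs^-(J)\hookrightarrow \B^K_L\otimes\B^J_L
\]
gives an injective map. This injection is a tensor product of injective maps between free $\Z$-modules, so it stays injective after tensoring with $\mathbb Q$ and is therefore injective over $\Z$ (the modules are torsion-free). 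Consequently $x_L=0$, contradicting the choice of $L$, and so $\phi$ is injective.

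The step I expect to need the most care is Step 1. There one must check that the operators $\tilde\kappa_{\vc}$ and $\tilde\mu_{\uc}$ really send $\B^J_L$ into $\B^{L'}_L$ and then into $\B^K_L$ with the stated index conventions. One must also check that the identification of $\phi$ on $(L)\Fs^-(J)$ with $\tilde\kappa$ of the corresponding elements of $\cUc^J_L$ is obtained from Corollary~\ref{cor-isom} by adjunction, using $\phi(\iota F)=\tilde\kappa_{d(F)}$.
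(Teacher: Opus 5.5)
Your proposal is correct and is essentially the paper's proof: pick $L$ maximal among the minimal flats of the terms, restrict $\phi(x)$ to $\B^J_L$ so that every term passing through another $L'$ factors through $\B^{L'}_L=0$, and conclude from the factorization $(K)\Fs^+(L)\otimes(L)\Fs^-(J)\cong\cUc^K_L\otimes\cUc^J_L\hookrightarrow\B^K_L\otimes\B^J_L\cong\Hom(\B^J_L,\B^K_L)$. The differences are cosmetic. You argue by contradiction where the paper says ``remove these terms and repeat.'' You also make explicit the rank-one formula and the torsion-freeness argument for injectivity of the tensor product of injections, both of which the paper leaves implicit.
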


\begin{proof}
Recall that any element of $S(M)$ can be expressed as linear combination of concatenations of a positive and negative path:
\[(J \gtrdot \ldots \gtrdot L  \lessdot \ldots \lessdot K).\]  

Suppose that $\phi\left(\sum_i a_i F_i \right)= 0$, where each $F_i = F_i^+ \cdot F_i^-$ is a concatenation of a positive and negative path.   For any $J,K \in \Lc$,
\[ (K) \cdot \phi\left(\sum_i a_i F_i \right) \cdot (J) = \phi\left(\sum_i a_i (K)\cdot F_i \cdot(J)\right)= 0.\]
In this way we can assume that each $F_i$ begins with the same flat $J$ on the right and ends at the same last flat $K$ on the left.

For each flag path $F_i$ in the sum, let $L_i$ be the minimal flat of $F_i$.  If the set of $F_i$ is empty, then the sum is zero and we are done, so suppose the set of $F_i$ is nonempty and let $L$ be a maximal element among the set of all $L_i$.

Consider the restriction of $\phi(\sum_i a_i F_i) = \sum_i a_i \phi(F_i) \in \End(\Bt)$ to $\B_L^J$.  For every $i$, $\phi(F_i)(\B_L^J) \subseteq \B_L^K$ and $\phi(F_i)|_{\B_L^J}$ factors through $\B_L^{L_i}$.  By definition, $\B_L^{L_i} =0$ unless $L_i \geq L$, but by our assumption $L \leq L_i$, so $\phi(F_i)(\B^L_J)=0$ unless $L=L_i$.

Let $I_L$ denote the set of $i$ such that $L_i=L$.  We can then identify the sum $\sum_{i\in I_L} a_i F_i$ with an element of $(K)\Fs^+(L) \otimes (L)\Fs^- (J)$.  

Consider the map from $(K)\Fs^+(L) \otimes (L)\Fs^- (J)$ to $\Hom(\cB^L_J, \cB^L_K)$ defined by $F^+ \otimes F^- \mapsto \phi(F^+ \cdot F^-)|_{\B^L_J}$.  We claim that this map is injective.  To see this, note that it factors as:
\[ (K) \Fs^+ (L) \otimes (L) \Fs^- (J) \stackrel{\mathrm{id}\otimes \iota}{\cong} \cUc_L^K \otimes \cUc^J_L \hookrightarrow  \cB_L^K \otimes \cB_L^J \cong \Hom(\cB_L^J, \cB_L^K), \]
where the first isomorphism is induced by Corollary~\ref{cor-isom} and the map in the middle is induced by the inclusions $\cUc \hookrightarrow \cB$ and the final isomorphism is from the identification $\cB^L_J \cong (\cB^L_J)^*$ coming from the pairing $\langle~,~\rangle$.  Thus the composition is injective and $\sum_{i \in I_L} a_i F_i=0$.  Removing these terms and repeating the same argument, we conclude that $\sum_i a_i F_i=0$.
\end{proof}

%%%%%%%%%%%%%%%%%%%
\section{Presentation of original matroidal Schur algebras}
\label{sec-Rcheck}
%%%%%%%%%%%%%%%%%%%

We conclude by proving Theorem~\ref{thm-quiver}, which gives a finite presentation of the original Schur algebra $\widetilde{R}(M)$, and hence by duality of $R(M) = \widetilde{R}(M^*)$.

Consider the restriction of the map $\phi: S(M) \to \End(\tilde\B)$ to $e S(M) e$.  For any path $F$ that begins and ends at cyclic flats, the subspace $\B \subseteq \tilde\B$ is invariant for the operator $\phi(F) \in \tilde\Rc \subset \End(\tilde\B)$ and so $\phi(F)$ restricts to an operator $\phi'(F) = \phi(F)|_\B$.  Thus $\phi$ induces a map $\phi':eS e \to \End(\B)$.

\begin{lemma}
The image $\im(\phi')$ is equal to $\Rc(M)$.
\end{lemma}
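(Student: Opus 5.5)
We prove the two inclusions $\im(\phi') \subseteq \Rc(M)$ and $\Rc(M) \subseteq \im(\phi')$ separately. Both rest on the Reedy decompositions already established: the one for $S(M)$ from Section~\ref{sec-reedy}, and Proposition~\ref{thm:cellular} for $\Rc(M)$.

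\textbf{Surjectivity.} Since $\Rc(M)$ is generated by the $\mu_\uc$ and $\kappa_\uc$ with $\uc \in \cUc^G_F$ and $F \subseteq G$ cyclic, it suffices to produce each such generator in the image. By Corollary~\ref{cor-isom} there is some $x \in (G)\Fs^+(F)$ with $\phi(x) = \tilde\mu_\uc$. Because $F$ and $G$ are cyclic, we have $x = (G)\,x\,(F) \in eS(M)e$. Restricting to $\B$ then gives $\phi'(x) = \tilde\mu_\uc|_\B = \mu_\uc$. Applying the involution $\iota$ gives $\kappa_\uc$ in the same way. This direction uses only results already proved.

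\textbf{Containment $\im(\phi') \subseteq \Rc(M)$.} By the Reedy decomposition, $eS(M)e$ is spanned by elements $(K)\,x^+ x^-\,(J)$ with $K, J$ cyclic, $x^+ \in (K)\Fs^+(L)$ and $x^- \in (L)\Fs^-(J)$, where $L$ is an arbitrary flat. Under $\phi$ such an element goes to $\tilde\mu_{\uc}\tilde\kappa_{\uc'}$ with $\uc \in \cUc^K_L$ and $\uc' \in \cUc^J_L$. The difficulty is that $L$ need not be cyclic. The plan is to replace $L$ by a cyclic flat, namely the largest cyclic flat $L^\circ \subseteq L$, obtained by deleting the coloops of $M^L$. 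On $\B$ the operator $\tilde\mu_\uc\tilde\kappa_{\uc'}$ only sees summands $\B^J_F$ with $F$ cyclic and $F \subseteq L$, hence $F \subseteq L^\circ$. Using the factorization of bases across the interval, $\B^L_F \cong \B^L_{L^\circ} \otimes \B^{L^\circ}_F$ for such $F$, I would rewrite the composite as a sum of terms $\mu_{\uc_1}\kappa_{\uc_1'}$ that pass through $L^\circ$, with $\uc_1 \in \cUc^K_{L^\circ}$ and $\uc_1' \in \cUc^J_{L^\circ}$. This is possible because the interval $[L^\circ, L]$ consists only of coloops, so $M^L_{L^\circ}$ is a free matroid and $\cUc^L_{L^\circ}$ is spanned by the single monomial on $L \setminus L^\circ$. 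Concretely, composing $\kappa$ and then $\mu$ through $L$ should equal composing through $L^\circ$ after wedging with, and contracting against, this monomial.

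\textbf{Main obstacle.} The hardest step is this contraction-to-$L^\circ$ computation. It needs two checks: that wedging $\uc \in \cUc^K_L$ with $d^L_{L^\circ}$ lands in $\cUc^K_{L^\circ}$, and that signs match, since they come from reordering wedge factors. If the direct computation becomes messy, the fallback is to argue by induction on $|L \setminus L^\circ|$, removing one coloop $j$ at a time. In that case the relevant rank-one step is exactly $(L \gtrdot L\setminus j \lessdot L)$, and its image under $\phi$ acts as the identity on the part of $\B$ that factors through $L$. This also explains the role of the relation $(\bigvee)$ in Theorem~\ref{thm-quiver}.
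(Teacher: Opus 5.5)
Your proposal is correct and takes essentially the same route as the paper. Surjectivity comes from the flag description of $\cUc$ in Lemma~\ref{lem-surj}. Containment comes from rerouting a path $F^+F^-$ through the cyclic flat obtained from $L$ by deleting its coloops one at a time, and the paper's proof is exactly your fallback. It inserts $\tilde\mu_{d(H)}\tilde\kappa_{d(\iota H)}$ for the flag $H$ from $L$ down to $L^\circ$; this is the identity on $\B^L_G$ for every cyclic $G\subseteq L$, because each single-coloop wedge map and its adjoint are mutually inverse isometries, so the sign check you flagged is automatic.
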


\begin{proof}
We first show that $\Rc(M) \subseteq \im(\phi')$.  The subalgebra $\Rc(M) \subseteq \End(\B)$ is generated by the operators $\mu_\uc = \tilde\mu_\uc |_{\B}$ and $\kappa_\uc = \tilde\kappa_\uc |_{\B}$ for all $\uc \in \cUc$. By the proof of Lemma~\ref{lem-surj},  $\tilde\mu_\uc |_{\B}$ (resp. $\tilde\kappa_\uc |_{\B}$) for any element $\uc \in \cUc^G_F$ for $F,G \in \F$ can be expressed as the image under $\phi'$ of a linear combination of positive (resp. negative) flag paths of the form $(G \gtrdot \ldots \gtrdot F)$ (resp. $(F \lessdot \ldots \lessdot G)$).  Thus $\Rc(M) \subseteq \im(\phi')$.

We now show that $\im(\phi') \subseteq \Rc(M)$.  Any element of $eS(M) e$ is a linear combination of flag paths $F = F^+ \cdot F^- = (J \gtrdot \ldots \gtrdot L \lessdot \ldots \lessdot K)$ such that $J,K \in \F$, so to prove the lemma it suffices to show that the image under $\phi'$ of such a path is contained in $\Rc(M)$.
If $L \in \F$, then 
\[ \phi'(F^+ F^-)  = \tilde\mu_{d(F^+)} \tilde\kappa_{d(F^-)}|_\B = \mu_{d(F^+)} \kappa_{d(F^-)} \in \Rc(M).\]
If $L \not\in \F$, choosing any order on the coloops of $L$ and removing them one by one gives a positive flag $H = (L=L_k \gtrdot  \ldots \gtrdot L_1 \gtrdot L_0=L')$ with $L' \in \F$.

Then
\[ \phi'(F^+ \cdot F^-) = \tilde\mu_{d(F^+)} \tilde\kappa_{d(F^-)}|_\B = \tilde\mu_{d(F^+)} (\tilde\mu_{d(H)} \tilde\kappa_{d(\iota H)}) \tilde\kappa_{d(F^-)}|_\B \]
\[= \tilde\mu_{d(F^+ \cdot H)}  \tilde\kappa_{d(\iota H \cdot F^-)}|_\B = \mu_{d(F^+ \cdot H)} \kappa_{d(\iota H \cdot F^-)}\in \Rc,\]
here the second equality holds because for any $G \subset L_i$, $\phi(L_{i+1} , L_i)=\tilde\mu_{d_{L_{i+1}}^{L_i}}:\B_G^{L_{i}} \to \B_G^{L_{i+1}}$ and $\phi(L_i , L_{i+1}) = \tilde\kappa_{d^{L_i}_{L_{i+1}}}:\B_G^{L_{i+1}} \to \B_G^{L_{i}}$ are inverses. 
\end{proof}

It remains to show that the kernel of the homomorphism $\phi': eS e  \onto \Rc(M)$ is generated by the relation ($\bigvee$). 

Now consider the free $\Z$-submodule \[\bigoplus_{L \subseteq K,J \in \F} (K) \Fs^+ (L) \otimes (L) \Fs^- (J) \subseteq eS(M)e\] where the sum runs over cyclic flats $J,K,L$.  The composition of this inclusion with the projection to $eS e/(\bigvee)$ is surjective (because any path that begins and ends at a cyclic flat, modulo the relation ($\bigvee$) can be expressed as a path of the form $F^+ F^-$, where $F^+ \in e \F^+ e, F^- \in e \F^- e$).

Thus we have a commutative diagram:
\[\begin{tikzcd}
& eS e \arrow[d]\arrow[dr,twoheadrightarrow,"\phi'"] & \\
\bigoplus_{L \subset K,J \in \F} (K) \Fs^+ (L) \otimes (L) \Fs^- (J) \arrow[ur,hook] \arrow[r,twoheadrightarrow]&  eS e/(\bigvee) \arrow[r,twoheadrightarrow] & \Rc(M).
\end{tikzcd}
\]

Now the rank \[ \mathrm{rank}\left(\bigoplus_{L \subseteq K,J \in \F} (K) \Fs^+ (L) \otimes (L) \Fs^- (J) \right)= \sum_{L \subseteq K,J \in \F} \mu^+(M^K_L) \cdot \mu^+(M^J_L).\] Similarly, $\Rc(M) \cong \bigoplus_{L \subseteq K,J \in \F} \Uc_{KL} \otimes \Uc_{LJ}$ by Proposition~\ref{thm:cellular}, which has the same rank by~\cite[Proposition (2.18)]{BV}.  We conclude that both maps along the bottom of the diagram are isomorphisms as $\Z$-modules.

\bibliography{refs}
\bibliographystyle{amsalpha}

\end{document}